\def\bT{\bar{T}}
\def\bP{\bar{P}}
\def\Lim{\mathop{\rm Lim}}
\def\ult{\mbox{Ult}}
\def\rR{\mbox{R}}
\def\rP{{\mathop{\rm P}}}
\def\bj{\bm{J}}
\def\bJ{\bm{J}}
\def\bM{\bm{M}}
\def\old{{\mathop{o}}}
\def\Tr{{\mathop{\rm Tr}}}
\def\tr{\mathop{{\rm Tr}}}
\def\rud{\mbox{rud}}
\def\bDiamond{{\mbox{\large$\Diamond$}}}
\def\bsot{{\mathbf \Sigma}^1_2}
\def\aaqd{{\aaq}_{\delta}}
\def\vs{\vec{s}}
\def\zfc{\mbox{{ZFC}}}
\def\CAA{C(\hspace{-1pt}\aaq\hspace{-1pt})}
\def\aaq{\mathop{{\mbox{\tt a\hspace{-0.4pt}a}}}}
\def\aaqs{\mathop{\mbox{\tt a\hspace{-0.4pt}a\hspace{0.6pt}$s$\hspace{0.6pt}}}}
\def\aaqu{\mathop{\mbox{\tt a\hspace{-0.4pt}a\hspace{0.6pt}$u$\hspace{0.6pt}}}}
\def\aaqt{\mbox{\tt a\hspace{-0.4pt}a\hspace{0.6pt}$t$\hspace{0.6pt}}}
\def\aaqss{\mbox{\tt a\hspace{-0.4pt}a\hspace{0.6pt}$\bs$\hspace{0.6pt}}}
\def\aal{\mbox{\tt aa}^{\hspace{-0.0pt}{0}}}
\def\aaqt{\mbox{\tt a\hspace{-0.4pt}a\hspace{0.6pt}$t$\hspace{0.6pt}}}
\def\aam{\mbox{\tt aa}^{\hspace{-0.7pt}-}}
\def\stat{\mbox{\tt s\hspace{-1pt}t\hspace{-1pt}a\hspace{-1pt}t}\hspace{1pt}}
\def\PFA{\mbox{\rm PFA}}
\def\restriction{\upharpoonright}
\def\rest{\upharpoonright}
\def\cof{\mathop{\rm cf}}
\def\rng{\mathop{\rm rng}}
\def\dom{\mathop{\rm dom}}
\def\ran{\rng}
\def\bx{\vec{x}}
\def\bs{\vec{s}}
\def\bt{\vec{t}}
\def\bz{\vec{z}}
\def\by{\vec{y}}
\def\ba{\vec{a}}
\def\bb{\vec{b}}
\def\bc{\vec{c}}
\def\Pw{{\cal P}}
\def\On{{\rm On}}
\def\la{\langle}
\def\ra{\rangle}
\def\ma{{\mathcal A}}
\def\mm{{\mathcal M}}
\def\mn{{\mathcal N}}
\newcommand{\DD}{{\cal D}}
\newcommand{\I}{{\cal I}}
\newcommand{\J}{{\cal J}}
\newcommand{\F}{{\cal F}}
\newcommand{\cf}{\mathop{\rm cf}}
\mathchardef\bfSigma="0606 \mathchardef\bfPi="0605
\mathchardef\bfDelta="0601
\def\rest{\mathord{\restriction}} 
\newcommand{\force}{\Vdash} 
\def\rest{\mathord{\restriction}}
\newcommand{\open}{\Bbb}
\newcommand{\oP}{{\open P}}
\newcommand{\oQ}{{\open Q}}
\newcommand{\oR}{{\open R}}
\def\DD {{\cal D}}
\def\bracketdown#1{\mathop{\vbox{\ialign{##\crcr\noalign{\kern2\p@}
\downbracketfill\crcr\noalign{\kern2\p@\nointerlineskip}
$\hfil\displaystyle{#1}\hfil$\crcr}}}\limits}
\def\bracketup#1{\mathop{\vbox{\ialign{##\crcr\noalign{\kern1\p@}
\upbracketfill\crcr\noalign{\kern1\p@\nointerlineskip}
$\hfil\displaystyle{#1}\hfil$\crcr}}}\limits}
\def\upbracketfill{$\m@th
\makesm@sh{\llap{\vrule\@height2\p@\@width.4\p@}}%
\leaders\vrule\@height.4\p@\hfill
\makesm@sh{\rlap{\vrule\@height2\p@\@width.4\p@}}$}
\def\downbracketfill{$\m@th
\makesm@sh{\llap{\vrule\@height.4\p@\@depth1.6\p@\@width.4\p@}}%
\leaders\vrule\@height.4\p@\hfill
\makesm@sh{\rlap{\vrule\@height.4\p@\@depth1.6\p@\@width.4\p@}}$}
\def\CH{\text{CH}}
\def\GCH{\text{GCH}}
\def\\(\two\){Eloise}
\def\a{\alpha}
\def\g{\gamma}
\def\bab{\bar{\beta}}
\def\two{\mbox{$\mathbf{II}$}}
\def\DEF{\mbox{Def}}
\def\phi{\varphi}
\def\hod{\mbox{\rm HOD}}\def\HOD{\mbox{\rm HOD}}
\newcommand{\psfrag}[2]{}
\DeclareMathAlphabet{\mathpzc}{OT1}{pzc}{m}{it}
 \theoremstyle{plain}
  \newtheorem{theorem}{Theorem}[section]
  \newtheorem{lemma}[theorem]{Lemma}
  \newtheorem{corollary}[theorem]{Corollary}
  \newtheorem{proposition}[theorem]{Proposition}
  \theoremstyle{definition}
  \newtheorem{definition}[theorem]{Definition}
  \newtheorem{example}[theorem]{Example}
  \theoremstyle{remark}
 \newtheorem*{claim}{Claim}
\def\LL{\mathcal{L}}
\newcounter{footnoteValueSaver}
\begin{document}

\author{Juliette Kennedy\thanks{Research partially supported by grant 40734 of the Academy of Finland.}\\ Helsinki \and Menachem Magidor\thanks{Research supported by the Simons Foundation and  the Israel Science Foundation grant 684/17.}\\ Jerusalem \and Jouko V\"a\"an\"anen\thanks{Research supported by the Simons Foundation, the Faculty of Science of the University of Helsinki,  and grant 322795 of the Academy of Finland.}\\ Helsinki and Amsterdam}

\title{Inner Models from Extended Logics: 
Part 2\thanks{The authors are grateful to the American Institute of Mathematics for support and to Gabriel Goldberg,  Paul Larson, Otto Rajala, Ralf Schindler, John Steel,  Philip Welch, Trevor Wilson, Hugh Woodin and Ur Yaar for comments on the results presented here. This project has received funding from the European Research Council (ERC) under the
European Union’s Horizon 2020 research and innovation programme (grant agreement No
101020762).}
}
  
\maketitle

\begin{abstract}

We introduce a new inner model $\CAA$ arising from stationary logic. We show that assuming a proper class of    Woodin cardinals, or alternatively $\PFA$, the regular uncountable
cardinals of $V$ are measurable in the inner model $\CAA$ and $\CAA$ satisfies $\CH$. Moreover, assuming a proper class of    Woodin cardinals, the theory of $\CAA$ is (set) forcing absolute. We introduce an auxiliary concept that we call Club Determinacy, which simplifies the construction of $\CAA$ greatly but may have also independent interest.  Based on Club Determinacy, we  introduce the concept of aa-mouse which we use  to prove $\CH$ and other properties of the inner model $\CAA$. 
\end{abstract}


\section{Introduction}

This is the second part of a two-part paper on inner models obtained by means of extended logics. 
A generally acknowledged weakness of G\"odel's in many ways robust inner model $L$ is that it cannot support large cardinals, beyond such ``small" large cardinals as  inaccessible, Mahlo, and weakly compact cardinals. In the so-called \emph{Inner Model Program} inner models are built for bigger and bigger large cardinals, reaching currently as far as a Woodin limit of Woodin cardinals. These models resemble G\"odel's $L$ in that deep \emph{fine-structure} can be established for them leading, among other things, to canonical proofs of $\CH$, $\bDiamond$, $\Box$, etc. in those inner models. While these so-called fine-structural inner models are extremely useful in almost all areas of modern set theory, it cannot be denied that they are built somewhat ``opportunistically", by assuming a large cardinal and building a carefully crafted model around it. With our new inner models we look for a more canonical inner model construction which would still have desirable properties. 

But what should one expect from a canonical inner model? First of all we propose that we should expect \emph{robustness}. We have in mind three meanings of robustness: (1) Stability of the model under changes in the definition (in the
fixed universe of set theory). 
(2) Robustness across universes of set theory, stability under
forcing extensions.
(3) The theory of the model (or an important part of it) should be
invariant under forcing extensions.
A second quality we propose to expect from a canonical inner model is \emph{completeness} in the sense that  canonical definable objects should be included.
A litmus test of this would be  closure under sharps or other canonical operations.

The first part  \cite{kmv} of this two-part paper dealt mainly with some general questions concerning  inner models obtained from extended logics, and more specifically the inner model $C^*$ defined by means of the cofinality quantifier  \cite{MR0376334}. In this second part we focus on the a priori bigger inner model $\CAA$ defined by means of the stationary logic \cite{MR486629}. Note that
\begin{equation}\label{relations}
L\subseteq C^*\subseteq \CAA\subseteq \hod.
\end{equation}

The main results about $C^*$ in \cite{kmv} were that under the assumption of a proper class of Woodin cardinals, the theory of $C^*$ is set forcing absolute, uncountable cardinals $>\omega_1$ of $V$ are weakly compact in $C^*$ (and $\omega_1$ is Mahlo), and the theory of $C^*$ is independent of the cofinality used. Moreover, $C^*$ is closed under sharps. We were not able to solve the problem of $\CH$ in $C^*$ although we showed, assuming three Woodin cardinals and a measurable above them, that for a cone of reals $r$ the relativized inner model $C^*[r]$  satisfies $\CH$.

Here
we show that if there is a proper class of Woodin cardinals, then uncountable cardinals of $V$ are measurable in $\CAA$,  and the theory of the model $\CAA$ is invariant under set forcing. This raises naturally the question of the truth-value of $\CH$ in $\CAA$. We show, assuming a proper class of Woodin cardinals, or alternatively $\PFA$, that $\CAA$ satisfies $\CH$. Again, we point out that $\CAA$ is closed under sharps. We also consider some variants of $\CAA$.

The models $C^*$ and $\CAA$ arise from general considerations involving such basic set-theoretical concepts as cofinality and stationarity. It is quite remarkable that we can achieve the level of robustness that these models manifest. It should come as no news that we have to make set-theoretical assumptions before we can obtain robustness results for $C^*$ and $\CAA$. For example, if $V=L$, then both models are simply identical to $L$. Our assumptions are either large cardinal axioms or forcing axioms. 

There are two new tools that we develop for the proofs of the results mentioned. The first tool is \emph{Club Determinacy} which simplifies stationary logic considerably in our construction. Roughly speaking, Club Determinacy says that every stationary definable set of countable subsets of $\CAA$ contains a club. The second tool is the concept of an \emph{aa-mouse}. Roughly speaking, an aa-mouse consists of a transitive set together with a theory formulated in stationary logic. Intuitively speaking, the transitive set satisfies the theory-part, but this is not true in general. For example, it is not true  if the transitive set is countable. The major part of this paper is devoted to proving Club Determinacy under large cardinal assumptions, or $\PFA$, and to developing the theory of aa-mice and, what we call, aa-ultrapowers of aa-mice.

We feel that there are a wealth of questions worth studying about the new inner models. At the end of the paper we list some such questions.
\medskip

\noindent{\bf Notation:} If $\kappa$ is a cardinal and $M$ a set, we denote the set of subsets  $M$ of cardinality $<\kappa$ by  $\Pw_{\kappa}(M)$. We use vector notation $\ba,\bb,\bx$ etc for finite sequences.
$\forall\bx\phi$ is short for $\forall x_1\ldots\forall x_n\phi$ and 
$\aaq\bs\phi$ is short for $\aaqs_1\ldots\aaqs_n\phi$. If $h$ is a function and $x\subseteq\dom(f)$, then we use $h[x]$ to denote the set $\{h(y) : y\in x\}$.
$H(\mu)$ is the set of sets of hereditary cardinality less than $\mu$. The class of limit ordinals is denoted $\Lim$.

\section{Basic concepts}

Let us recall that a set $S$ of countable subsets of a set $M$ is said to be \emph{closed unbounded} (club) if for every countable $s\subseteq M$ there is $s'\in S$ such that $s\subseteq s'$, and for every 
$\{s_n:n<\omega\}\subseteq S$ such that $\forall n(s_n\subseteq s_{n+1})$ the set $\bigcup_ns_n$ is in $S$, or equivalently, $S$ is the set of countable subsets of $M$ closed under a fixed given countable set of functions. The set $S$ is \emph{stationary} if it meets every club set of countable subsets of $M$. \emph{Stationary logic}  is the extension of first order logic by the following second order quantifier:

\begin{definition}If $\ba$ is a finite sequence of elements of $M$ and $\bt$ is a finite sequence of countable subsets of $M$, then we define
$$\mm\models\aaqs\phi(s,\bt,\ba)$$ if and only if $\{A\in \Pw_{\omega_1}(M)\ :\ (\mm,A)\models\phi(A,\bt,\ba)\}$ contains a club of countable subsets of $M$.  We denote $\neg\aaqs\neg\phi$ by $\stat s\phi$. The extension of first order logic by the quantifier $\aaq$ is denoted $\LL(\aaq)$.
\end{definition}

This quantifier was essentially introduced in \cite{MR0376334} and studied extensively in \cite{MR486629}. 
The idea is that rather than asking whether there is \emph{some} countable set $A$ satisfying $\phi(A)$, or whether \emph{all}  countable sets $A$ satisfy $\phi(A)$, we ask whether most  $A$ satisfy $\phi(A)$. The second order ``some/all" quantifiers are generally believed to be too strong to give rise to interesting model theory, but the ``most" quantifier has turned out to be better behaved. There is a complete axiomatization,  a Compactness Theorem in countable vocabularies, and a Downward L\"owenheim-Skolem Theorem down to $\aleph_1$ for countable theories (i.e. every countable consistent theory has a model of cardinality $\aleph_1$).

Some examples of the expressive power of stationary logic are the following:
We can express
 ``$\phi(\cdot)$ is countable''  with $\aaqs\forall y (\phi(y)\to s(y))$.
 If we have a linear order $\phi(\cdot,\cdot)$, we can express  it having cofinality $\omega$
 with $\aaqs\forall x \exists y(\phi(x,y)\wedge s(y))$. We can express   $\phi(\cdot,\cdot)$ being  $\aleph_1$-like
with $\forall x\aaqs\forall y (\phi(y,x)\to s(y))$. 
The set $\{\alpha<\kappa\ :\ \cof(\alpha)=\omega\}$ is $\LL(\aaq)$-definable on $(\kappa,<)$ by means of $\aaqs(\sup(s)=\alpha)$, where $\sup(s)$ is a shorthand notation for the supremum of $s$.  The property of a set $A\subseteq \{\alpha<\kappa\ :\ \cof(\alpha)=\omega\}$  being stationary is definable in $\LL(\aaq)$ by means of  $\stat s(\sup(s)\in A)$.
Finally, we can express an $\aleph_1$-like linear order  $\phi(\cdot,\cdot)$  containing a closed unbounded subset (i.e. a copy of $\omega_1$)
with  $\aaqs(\sup(s)\in\dom(\phi))$.

We will need below the concept of \emph{relativisation} of $\LL(\aaq)$-formulas. 
Relativisation is defined inductively as in first order logic except  that  the relativisation $(\aaqs\psi(s))^{(x)}$ of $\aaqs\psi(s)$ to $x$ is defined as $\aaqs((\psi(s\cap x))^{(x)})$,
where $\psi(s\cap x)$ denotes the formula obtained from $\psi(s)$ by replacing everywhere $y\in s$ by $y\in s\wedge y\in x$.

The  {\em axioms} of the logic $\LL(\aaq)$ are \cite{MR486629}:

\begin{equation}\label{axioms}
\left.\begin{tabular}{l}
 $(A0)\quad \aaqs\phi(s)\leftrightarrow\aaqt\phi(t)$\\
 $(A1)\quad \neg\aaqs(\bot)$\\
 $(A2)\quad \aaqs(x\in s)$, $\aaqt(s\subseteq t)$\\
 $(A3)\quad (\aaqs\phi\wedge\aaqs\psi)\to\aaqs(\phi\wedge\psi)$\\
 $(A4)\quad \aaqs(\phi\to\psi)\to(\aaqs\phi\to\aaqs\psi)$\\
 $(A5)\quad \forall x\aaqs\phi(x,s)\to\aaqs\forall x\in s\phi(x,s)$. 
\end{tabular}\right\}
\end{equation}
 
\noindent The rules are Modus Ponens, the usual rule of generalisation and the new rule of $\aaq$-generalisation i.e. if $T\vdash\phi\to\psi$ and $s$ is not free in $T\cup\{\phi\}$, then $T\vdash\phi\to\aaqs\psi$. These are complete in the sense that any countable $\LL(\aaq)$-theory consistent with them has a model of cardinality $\aleph_1$. Intuitively, (A1) says that $\emptyset$ is not club. (A2) says that the set of countable sets having a fixed element as an element, as well as the set of countable sets containing a fixed countable set as a subset, are club. (A3) and (A4) simply say that the club-filter (of definable sets) is a filter. Finally, (A5) is a formulation of Fodor's Lemma. 
 
 Suppose $A$ is a stationary subset  of a regular $\kappa>\omega$ such that  $\forall\alpha\in A(\cof(\alpha)=\omega)$.
 The {$\omega$-club filter} $\F^\omega(A)$  is the set of subsets of $A$ which contain the intersection of $A$ with a club  subset of $\kappa$.
Note that $\F^\omega(A)$ is $<\!\kappa$-closed.
   The property of $B\subseteq\kappa$  belonging to $\F^\omega(A)$ is definable from $A$ in $\LL(\aaq)$ by means of $\aaqs(\sup(s)\in A\to \sup(s)\in B)$.


There are generalizations of the notions of club
and stationarity from $\Pw_{\omega_1}(A)$ to $\Pw_{\lambda} (A)$, where $\lambda$ is a regular cardinal.
 Since there are slight variations in the way clubs and stationary
subsets of $\Pw_{\lambda}(A)$ are defined, we specify below what we mean by this
terminology.

\begin{definition}
$C\subseteq \Pw_{\lambda}(A)$ is \emph{closed unbounded} (club) in $\Pw_{\lambda}(A)$
if for every $X \in \Pw_{\lambda}(A)$ there is $Y \in C$ such that $X\subseteq Y$ and, moreover,
if $\la X_j:j<\delta\ra$, $\delta<\lambda$, is an increasing sequence of members of $C$,  then
$\bigcup_{\alpha<\delta}X_\alpha$ is in $C$. A set $S\subseteq\Pw_\lambda(A)$ is called \emph{stationary} in $\Pw_\lambda(A)$ if it meets every club of $\Pw_\lambda(A)$.

\end{definition} 

If $\lambda\subseteq A$, then $\{X\in \Pw_\lambda(A):X\cap\lambda\in\lambda\}$ is a club in $\Pw_\lambda(A)$.
Also, if $\lambda\subseteq A$ then $D\subseteq\Pw_\lambda(A)$ contains a club if and only if there
 is an algebra on $A$ (with countably many operations) such that (the domains of) all subalgebras whose intersection with $\lambda$ is an ordinal, are in $D$.

If $\delta$ is an uncountable cardinal such that $\delta=\delta^{<\delta}$, we consider the quantifier ${\aaq}_{\delta}$ with the following meaning: If $\ba$ is a finite sequence of elements of $M$ and $\bt$ is a finite sequence of  subsets of $M$ of cardinality $<\delta$, then we define
$$\mm\models{\aaq}_{\delta} s\phi(s,\bt,\ba)$$ 
 if and only if $\{A\in \Pw_{\delta}(M)\ :\ (\mm,A)\models\phi(A,\bt,\ba)\}$ contains a club of  $\Pw_{\delta}(M)$. It is proved in  \cite{MR819644} that
a sentence of $\LL(\aaq)$ has a model if and only if it has a model when $\aaq$ is interpreted as ${\aaq}_{\delta}$.
\medskip

\section{Inner model $\CAA$}

The idea is that $\CAA$ is the inner model that results if in the usual definition of G\"odel's constructible hierarchy $L$ the role of first order logic as a vehicle of definability is played by stationary logic. In fact, we model our definition of $\CAA$ more in the style of Jensen's $J$-hierarchy \cite{MR309729}, which is, after all, equivalent to G\"odel's $L$-hierarchy. We add stationary logic to the usual definition of the $J$-hierarchy. The addition takes place by adding the truth-definition of stationary logic as a special predicate to the usual definition.

Since the definition of $\CAA$ applies to any logic $\LL^*$, we formulate the following definition for an arbitrary logic $\LL^*$: 

\begin{definition}\label{definJ}
Suppose $\LL^*$ is a logic the sentences of which are\footnote{For the sake of simplicity.} (coded by) natural numbers.  We define the hierarchy $(J'_\alpha)$, $\alpha\in\Lim$, of {\em sets constructible using} $\LL^*$ %
and the class $\Tr$,  by  transfinite double induction,  as follows:
\footnote{The vocabulary of $\phi(\bx)$, $\bx=(x_1,\ldots, x_n)$,  below consists of two binary predicate symbols. The sentence
$\phi(\ba)$, $\ba=(a_1,\ldots, a_n)$, is the result of substituting a constant symbol $c_{a_i}$, denoting $a_i$, for $x_i$ for $i=1,\ldots, n$. We generally use $a_i$ to denote (also) the constant symbol $c_{a_i}$ when no confusion arises.}
$$\Tr=\{(\alpha,\phi(\ba)): (J'_\alpha,\in,\Tr\rest \alpha)\models\phi(\ba), \phi(\bx)\in\LL^*,\ba\in J'_\alpha, \alpha\in\Lim\},$$
where $$\Tr\rest \alpha=\{(\beta, \psi(\ba))\in \Tr:\beta\in \alpha\cap\Lim\},$$
and
\def\arraystretch{1.3}
\begin{equation}\label{J-hier}
\left\{\begin{array}{lcl}
J'_{0}&=&\emptyset\\
J'_{\alpha+\omega}&=&\rud_{\Tr}(J'_\alpha\cup\{J'_\alpha\})\\
J'_{\omega\nu}&=&\bigcup_{\alpha<\nu}J'_{\omega\alpha},\mbox{ for }\nu\in\Lim.
\end{array}\right.
\end{equation}
\def\arraystretch{1}
Here the rudimentary closure operation $\rud_{\Tr}$ includes the operation $x\mapsto x\cap \Tr$.
 We use $C(\LL^*)$ to denote the class $\bigcup_{\alpha\in\Lim}J'_\alpha$ and use $\bj'_\alpha$ to denote the structure $(J'_\alpha,\in,\Tr\rest\alpha)$. 
\end{definition}

Additionally, we denote $$\Tr_\alpha=\{\phi(\ba):(\alpha,\phi(\ba))\in\Tr\},$$ whence
$$\Tr=\bigcup\{\{\alpha\}\times \Tr_\alpha: {\alpha\in\Lim}\}.$$

The point of the definition of $\bj'_\alpha$ is that we do not only add in successor stages sets that are definable (or  images under rudimentary functions) from elements of the lower levels but we also add a truth-definition $\Tr\rest \alpha$ which makes reference to definable sets particularly smooth. In particular, it helps us produce a uniformly definable well-order of each of the levels $J'_\alpha$.

In the special case that $\LL^*$ is $\LL(\aaq)$, we denote $C(\LL(\aaq))$ by $$\CAA.$$
We also consider the inner model $C({\aaq}_{\delta})$ i.e $C(\LL({\aaq}_{\delta}))$. Since the quantifier $Q^{\cf}_\omega$, which gives rise to the inner model $C^*$ ($=C(\LL(Q^{\cf}_\omega))$) is definable in $\LL(\aaq)$, we have the trivial relations  of (\ref{relations}).

\begin{lemma}\label{well-order}
$\CAA$ is a model of $ZFC$. The model $\CAA$ has a canonical (first order) definable well-order $\prec$.
\end{lemma}

\begin{proof}
The claim follows from  general properties of the $J$-hierarchy (see e.g. \cite[Lemma 5.26]{MR3243739}).
\end{proof}

We recall the following connection between the $J$-hierarchy of Jensen and the  $L$-hierarchy of G\"odel\footnote{Note that we do not claim that the structures $J'_{\alpha}$ are amenable.} in the definition of $\CAA$:

\begin{lemma}[\cite{MR309729,MR3243739}] \label{defty} Suppose $(J'_\alpha)$ is the hierarchy generating $\CAA. $ A set $A\subseteq J'_\alpha$ is in 
$J'_{\alpha+\omega}$ if and only if   there are a first order formula $\phi(x,y)$ and $b\in J'_\alpha$ such that $A=\{a\in J'_\alpha : (J'_{\alpha},\in,\Tr\rest{\alpha}, \Tr_{\alpha})\models\phi(a,b)\}$.
\end{lemma}

We used a different definition for $C(\LL^*)$ in  \cite{kmv}. There we introduced an inner model obtained in the same way as G\"odel's constructible hierarchy $L$, but replacing in the definition first order logic by the logic $\LL^*$. The general construction was as follows:

\begin{definition}[\cite{kmv}]\label{defin}
Suppose $\LL^*$ is a logic. If $M$ is a set, let $\DEF_{\LL^*}(M)$ denote the set of all sets of the form $X=\{a\in M : (M,\in)\models\phi(a,{\bb})\},$ where $\phi(x,{\by})$ is an arbitrary formula of the logic $\LL^*$ and ${\bb}\in M$. We define the hierarchy $(L'_\alpha)$ as follows:

\begin{center}
\begin{tabular}{lcl}
$L'_0$&=&$\emptyset$\\
$L'_{\alpha+1}$&$=$&$\DEF_{{\mathcal L}^*}(L'_\alpha)$\\
$L'_\nu$&$=$&$\bigcup_{\alpha<\nu}L'_\alpha\mbox{ for limit $\nu$}$.\\
\end{tabular} 
\end{center}
\end{definition}

Let us use $C_{\old}(\LL^*)$ to denote the class $\bigcup_\alpha L'_\alpha$.
In the special case that $\LL^*$ is $\LL(\aaq)$, we denote $C_{\old}(\LL(\aaq))$ by $C_{\old}(\aaq).$
The reason for changing the definition from $C_{\old}(\LL^*)$ (as in  \cite{kmv}) to the current $C(\LL^*)$ is that it turned out to be unclear, as pointed out by Gabriel Goldberg\footnote{Personal communication.}, whether the former satisfies the Axiom of Choice. For  the logic $\LL(Q^{\cf}_\omega)$ there is no difference: 
$C(\LL(Q^{\cf}_\omega))=C_{\old}(\LL(Q^{\cf}_\omega)) (=C^*)$. We could give the new definition of $C(\LL^*)$ in terms of the $L$-hierarchy instead of the $J$-hierarchy, because of the close relationship between the two hierarchies, see e.g. \cite[\S 2.4]{MR309729}, but the $J$-hierarchy is more convenient because its levels are closed under the pairing function which we need to code finite sequences used in the definition of ${\tr}_\alpha$. 

 In the course of this paper we will see that $\CAA$ is in many ways a fairly robust inner model in the sense of our Introduction, at least if there are big enough large cardinals. 


It is important to keep in mind that the quantifier $\aaqs$ in the construction of $\CAA$ asks whether there is a club in $V$ of countable sets $s$ in $V$ with some property. Neither the club nor the countable sets need be in $\CAA$. Thus, although we focus on an inner model $\CAA$, we let the quantifier  $\aaq$ ``reach out" to $V$. Thus $\CAA$ knows certain facts about $V$ but it may not be able to have witnesses to corroborate those facts. The whole point of using $\LL(\aaq)$ in the definition of $\CAA$ is that $\LL(\aaq)$ provides \emph{some} information about $V$ but not {too} {much}.  

The countable levels $J'_\alpha$, $\alpha<\omega^V_1$, bring nothing new, although $\omega_1^V$ may be a large cardinal in $\CAA$. They are the same as the respective levels of the constructible hierarchy, as the $\aaq$-quantifier is eliminable in countable models.

Note that $S=\{\alpha<\kappa\ :\ \cof^V(\alpha)=\omega\}\in \CAA$.
 The property of $A\subseteq S$ of being stationary (in $V$) is definable in $\CAA$, as is the property of 
 containing the $\omega$-cofinal elements of a club.
 Thus, if $A\in \CAA$, then the ``trace" of the $\omega$-club filter of $V$ on $A$, namely 
 $(\F^\omega(A))^V\cap \CAA$, is in $\CAA$. One of the main results of this paper is that $(\F^\omega(\kappa))^V\cap \CAA$ is a normal ultrafilter on $\kappa$, whenever $\kappa>\omega$ is regular, assuming large cardinals. 
 
The following robustness property of $\CAA$ is often useful: 
 
\begin{proposition}\label{sigmaclosed}
Suppose $\oP$ is a $\sigma$-closed notion of forcing and $G$ is $\oP$-generic. Then $\CAA^V=\CAA^{V[G]}$.
\end{proposition}

\begin{proof} Let $\bJ''_\alpha$ be the $\bJ'_\alpha$ as computed in $V[G]$.
We prove $\bJ'_\alpha=\bJ''_\alpha$ for all $\alpha$, by induction on $\alpha$. The induction step boils down to  the following claim: If $S\subseteq \Pw_{\omega}(J'_\alpha)$ is an $\LL(\aaq)$-definable set with parameters in $V$ and $S\in V$, then $S$ is stationary in $V$ if and only if $S$ is stationary in $V[G]$. To prove this, let us assume $S$ is stationary in $V$. Then $S$ is stationary in $V[G]$ because $\oP$ is proper. On the other hand, if $S\in V$ is stationary in $V[G]$, then $S$ is obviously stationary in $V$.
\end{proof}

Many natural questions about $\CAA$ immediately suggest themselves:
\begin{itemize}
\item  Does it satisfy $\CH$? 
\item Does it have large cardinals? 
\item How absolute is it? 
\item Is its theory forcing absolute? 
\item How is it related to other known inner models such as $L$, $\HOD$, etc?
\end{itemize}

We will provide some answers in this paper, but many natural questions remain also unanswered.
We shall prove $\CAA\models \CH$ from large cardinal assumptions, but let us immediately observe that ZFC alone does not limit the cardinality of the continuum in $\CAA$ to either $\aleph_1$ or to $\le\aleph_2$. This is in sharp contrast to the case of $C^*$ (see \cite{kmv}) where the continuum is always at most $\aleph_2$ of $V$. 

\begin{theorem}\label{slllk}
$Con(ZF)$ implies  $Con(|\oR\cap \CAA|\ge\aleph_3^V)$. 
\end{theorem}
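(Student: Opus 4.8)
The plan is to realize the conclusion in a forcing extension of $L$ by coding many reals into the stationary structure of $V$ that $C(\aaq)$ is able to read off. Since $Con(ZF)$ yields a model of $ZFC+V=L$, I work over $L$ and produce the desired model by set forcing, which preserves consistency. \textit{Step 1 (create the reals).} Force with $\mathrm{Add}(\omega,\omega_3)$ over $L$, identify the generic (via an $L$-pairing of $\omega_3\times\omega$ with $\omega_3$) with a single $Z\subseteq\omega_3$, and set $W=L[Z]=L[G]$. As $\mathrm{Add}(\omega,\omega_3)$ is ccc it preserves all cardinals and cofinalities, so in $W$ we have $2^{\aleph_0}=\aleph_3$ and $\R^{W}=\R^{L[Z]}$ has size $\aleph_3^{W}=\aleph_3^{L}$. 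The whole point is to get this single $Z$ into $C(\aaq)$: then $L[Z]\subseteq C(\aaq)$ and all of these $\aleph_3$ reals land in $C(\aaq)$.

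\textit{Step 2 (make $Z$ visible via stationarity).} Fix in $L$, by Solovay splitting, a sequence $\langle S_\alpha:\alpha<\omega_3\rangle$ of pairwise disjoint stationary subsets of $E:=\{\alpha<\omega_3:\cof(\alpha)=\omega\}$. Being ccc, the Cohen forcing leaves $E$ unchanged and keeps each $S_\alpha$ stationary in $W$. Now force over $W$ with the standard club-shooting coding: a suitable iteration with supports of size $<\omega_3$ that, for each $\alpha\notin Z$, shoots a club through $\omega_3\setminus S_\alpha$, and does nothing at the coordinates $\alpha\in Z$. Each complement $\omega_3\setminus S_\alpha$ contains all points of cofinality $\omega_1$ and $\omega_2$, hence is fat, so this is fat-set club-shooting, which preserves cardinals and cofinalities $\le\omega_3$; since the $S_\alpha$ are disjoint, the iteration destroys the stationarity of exactly the intended sets. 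Let $V$ be the resulting model. By construction $E=\{\alpha<\omega_3:\cof^{V}(\alpha)=\omega\}$ is preserved, $\omega_3^{V}=\omega_3^{L}$, and for every $\alpha$, $S_\alpha$ is stationary in $V$ if and only if $\alpha\in Z$.

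\textit{Step 3 (recover $Z$ inside $C(\aaq)$ and conclude).} The parameter $\langle S_\alpha:\alpha<\omega_3\rangle$ lies in $L\subseteq C(\aaq)$, and $E\in C(\aaq)$. By the remark recalled above, for $A\subseteq E$ with $A\in C(\aaq)$ the property ``$A$ is stationary in $V$'' is $\LL(\aaq)$-definable via $\stat s(\sup(s)\in A)$; applying this to each $S_\alpha$ shows that $Z=\{\alpha<\omega_3: S_\alpha\text{ is stationary in }V\}$ is definable in $C(\aaq)$, whence $Z\in C(\aaq)$. Consequently $L[Z]\subseteq C(\aaq)$, and since $L[Z]$ is absolute we have $L[Z]^{V}=W$, so $\R\cap C(\aaq)\supseteq\R^{L[Z]}=\R^{W}$. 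As $|\R^{W}|=\aleph_3^{W}=\aleph_3^{V}$, this gives $|\R\cap C(\aaq)|\ge\aleph_3$ in $V$. Because $V$ is a set-forcing extension of a model of $ZFC$, the existence of such a $V$ follows from $Con(ZF)$, establishing $Con(|\R\cap C(\aaq)|\ge\aleph_3)$.

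The main obstacle is Step 2: arranging the club-shooting so that it simultaneously (i) preserves $\omega_1,\omega_2,\omega_3$ and the predicate $\cof^{V}(\cdot)=\omega$ on $\omega_3$, and (ii) realizes the pattern ``stationary iff $\alpha\in Z$'' with no collateral damage, i.e.\ so that no club introduced by the iteration accidentally destroys an $S_\beta$ with $\beta\in Z$. The fatness of the complements $\omega_3\setminus S_\alpha$ and the disjointness of the $S_\alpha$ are exactly what make the preservation and the surgical killing go through, and one must verify carefully that the recovered set in $C(\aaq)$ is \emph{precisely} $Z$, neither larger nor smaller, since that is what pins $L[Z]=W$ and hence the $\aleph_3$ reals inside $C(\aaq)$.
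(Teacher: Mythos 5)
Your overall strategy is the paper's: create $\aleph_3$ reals by ccc Cohen forcing over $L$, code the join of the generics into the pattern of which members of a Solovay-type family $\langle S_\alpha:\alpha<\omega_3\rangle$ remain stationary, and then read that pattern off inside $C(\aaq)$ using the $\LL(\aaq)$-definability of stationarity. Step~1 and Step~3 are fine. The genuine gap is in Step~2, and it is exactly the point you yourself flag as ``the main obstacle'': your justification of fatness is false. A set containing all points of cofinality $\omega_1$ and $\omega_2$ is \emph{not} thereby fat --- the set $\{\alpha<\omega_3:\cof(\alpha)>\omega\}$ contains no closed set even of order type $\omega+1$, since the supremum of such a set has cofinality $\omega$. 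What saves $\omega_3\setminus S_\alpha$ at order type $\omega+1$ is that it also contains $\bigcup_{\beta\ne\alpha}S_\beta$, but at order types $\omega_1+1$ and beyond fatness hinges on the \emph{reflection} behaviour of $S_\alpha$: if $S_\alpha\cap\delta$ is stationary in $\delta$ for every $\delta$ of cofinality $\omega_1$, then no club of $\omega_3$ contains a closed subset of $\omega_3\setminus S_\alpha$ of order type $\omega_1+1$, and fatness fails outright. Since you take the $S_\alpha$ to be an arbitrary Solovay splitting of all of $\{\alpha<\omega_3:\cof(\alpha)=\omega\}$, nothing rules this out. This is precisely why the paper does \emph{not} partition the whole cofinality-$\omega$ set: it fixes a \emph{non-reflecting} stationary $S$ with fat complement and partitions only $S$, so that the set through whose complement one shoots is contained in $S$ and its complement inherits fatness.

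A second, related weakness: you kill the $S_\alpha$ for $\alpha\notin Z$ by a long iteration of individual club-shootings, which forces you to address (and you do not) the distributivity of the iteration at $\omega_3$ (note that after adding $\omega_3$ Cohen reals, $2^{\omega_1}=\omega_3$, so the usual $\lambda^{<\lambda}=\lambda$ hypotheses for fat-club-shooting at $\lambda^+$ are not free) and the preservation of the stationarity of each surviving $S_\beta$ through \emph{all} $\omega_3$ stages, not just through a single one. The paper sidesteps both issues by performing a \emph{single} club-shooting through $\omega_3\setminus E$, where $E=\bigcup_{\alpha\in A}S_\alpha$ is the union of all the sets to be killed; since $E\subseteq S$ and $\omega_3\setminus S$ is fat, one forcing does all the killing at once, and each surviving $S_\beta$ is disjoint from $E$, so its stationarity is preserved by the standard density argument for a single fat-club-shooting. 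Your argument becomes correct if you (i) replace the partition of $\{\alpha<\omega_3:\cof(\alpha)=\omega\}$ by a partition of a non-reflecting stationary set with fat complement, and (ii) replace the iteration by one shooting through the complement of $\bigcup_{\alpha\notin Z}S_\alpha$ --- at which point it is the paper's proof with $Z$ in place of $A$.
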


\begin{proof}
Assume  $V=L$. Let $S\subseteq\omega_3$ be a non-reflecting stationary set of ordinals of cofinality $\omega$ with fat  complement (i.e. for every club $C\subseteq\omega_3$, $C\setminus S$ contains closed sets of ordinals of arbitrarily large order types below $\omega_3$). Let $S_\alpha$, $\alpha<\omega_3$, be a partitioning of $S$ into disjoint stationary sets. Let us now work in a generic extension obtained by adding Cohen reals $r_\alpha$, $\alpha<\omega_3$. The sets $S_\alpha$ are still stationary, because the forcing is CCC. Let $A$ be the set $\{\omega\cdot\alpha+n : n\in r_\alpha, \alpha<\omega_3\}$. Let $E$ be the union of the sets $S_\alpha$, where $\alpha\in A$. Let us move to a forcing extension obtained by forcing a club $D$ through the fat stationary set $\omega_3\setminus E$. This forcing does not add bounded subsets of $\omega_3$, whence $\omega_3$ does not change. If $\alpha\in A$, then $S_\alpha\cap D=\emptyset$ and $S_\alpha$ is therefore non-stationary after the forcing. On the other hand, If $\alpha\notin A$, then $S_\alpha\cap E=\emptyset$ and shooting a club through $\omega_3\setminus E$ preserves the stationarity of $S_\alpha$.  Hence for any $\alpha\in\omega_3$, $\alpha\in A$ if and only if $S_\alpha$ is non-stationary. Hence $A\in \CAA$.
Now $r_\alpha=\{n : \omega\cdot\alpha+n\in A\}$. Hence each $r_\alpha$ is in $\CAA$, and therefore $|\oR\cap \CAA|\ge\aleph_3^V$.
\end{proof}

The role of $\aleph_3$ in the above theorem is not crucial, but just an example. It can be replaced by any cardinal. Since $ZFC\vdash|\oR\cap C^*|\le\aleph_2$ \cite{kmv}, we obtain\footnote{Work in progress by a SQuaRE group shows that $C^*\ne \CAA$ follows also from the existence of a measurable cardinal of Mitchell-order $>1$.} 
:

\begin{corollary}
$Con(ZF)$ implies $Con(C^*\ne \CAA)$.
\end{corollary}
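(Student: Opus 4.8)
The plan is to read off the corollary directly from Theorem~\ref{slllk} and the bound $\zfc\vdash|\oR\cap C^*|\le\aleph_2$ of \cite{kmv}, by comparing the sizes of $\oR\cap C(\aaq)$ and $\oR\cap C^*$ inside one and the same universe.

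First I would reduce $Con(ZF)$ to $Con(\zfc)$: passing to $L$ produces, from any model of $ZF$, a model of $\zfc$, so it suffices to work over $\zfc$. The proof of Theorem~\ref{slllk} in fact begins with $V=L$ and, through a finite sequence of set forcings, yields an extension $W$ in which $|\oR\cap C(\aaq)|\ge\aleph_3$. Since a set-forcing extension of a model of $\zfc$ is again a model of $\zfc$, we have $W\models\zfc$.

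Next I would invoke \cite{kmv} inside $W$. As $W\models\zfc$, the cited result gives $W\models|\oR\cap C^*|\le\aleph_2$. Putting the two estimates together, in $W$ the set $\oR\cap C(\aaq)$ has cardinality at least $\aleph_3$ while $\oR\cap C^*$ has cardinality at most $\aleph_2$, so $\oR\cap C^*\ne\oR\cap C(\aaq)$ and therefore $W\models C^*\ne C(\aaq)$. Because $W$ arises by set forcing over a model of $\zfc$, its mere existence follows from $Con(\zfc)$, hence from $Con(ZF)$; this gives $Con(ZF)\Rightarrow Con(C^*\ne C(\aaq))$.

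There is essentially no obstacle here beyond this consistency bookkeeping. By (\ref{relations}) one always has $C^*\subseteq C(\aaq)$, and the content of the corollary is exactly that this inclusion can be strict: the reals witnessing $|\oR\cap C(\aaq)|\ge\aleph_3$ produced by Theorem~\ref{slllk} cannot all lie in $C^*$, whose reals are capped at $\aleph_2$. The only point requiring a line of care is that both facts hold in the single model $W$, which is immediate since each is a theorem of $\zfc$---the first after performing the forcing of Theorem~\ref{slllk}, the second outright.
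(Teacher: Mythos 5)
Your argument is exactly the paper's: the corollary is stated immediately after Theorem~\ref{slllk} with the one-line justification that $\zfc\vdash|\oR\cap C^*|\le\aleph_2$, so in the model of Theorem~\ref{slllk} the two inner models must differ. Your write-up just spells out the same comparison (plus the routine $Con(ZF)\Rightarrow Con(\zfc)$ reduction via $L$), and it is correct.
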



We can use the proof of Theorem~\ref{slllk} to prove the consistency of the non-absoluteness of $\CAA$ in the sense that inside $\CAA$ the $\CAA$ may look different than from outside:

\begin{proposition}
$Con(ZF)$ implies $Con(\CAA^{\CAA}\ne \CAA)$\footnote{Ur Ya'ar  has proved stronger results, see \cite{https://doi.org/10.48550/arxiv.2209.10247}.}.
\end{proposition}

\begin{proof}We proceed as in the proof of Theorem~\ref{slllk}.
Assume $V=L$.  Let $S_n$, $n<\omega$, be a partitioning of $\omega_1$ into disjoint stationary sets. Let us then work in a generic extension obtained by adding a Cohen real $r$. The sets $S_\alpha$ are still stationary.  Let $E$ be the union of $S_n$, where $n\in r$. Let us move to a forcing extension $V[G]$ obtained by forcing a club $D$ through the stationary set $\omega_1\setminus E$. Now $n\in r$ if and only if $S_n$ is non-stationary. Hence $r\in \CAA$, whence $L(r)\subseteq \CAA$ and $\CAA\ne L$.
One can prove by induction on $\alpha$ that $(J'_\alpha)^{V[G]}=(J'_\alpha)^{L[r]}$. The non-trivial step says that if $T\subseteq P_{\omega_1}(B)$ where $B,T\in L[r]$, then $T$ is stationary in $V[G]$ iff for some $n\in\omega\setminus\{r\}$ the set $\{s\in T|s\cap\omega_1\in S_n\}$ is a stationary subset of $P_{\omega_1}(B)$ in $L[r]$.
      Note that  all countable subsets of  $V[G]$ are in $L[r]$. Hence $\CAA\subseteq L(r)$, 
and, in consequence, we obtain $\CAA=L(r)$. But $\CAA^{L(r)}=L$. Hence $V[G]\models \CAA^{\CAA}\ne \CAA$.
\end{proof}

If $x\in \CAA$ and $x^\#$ exists, then $x^\#\in \CAA$. This is proved as for $C^*$ in \cite{kmv}. If $L^\mu$ exists, then $L^\nu\subseteq C^*$ for some $\nu$, and hence $L^\nu\subseteq \CAA$  for some $\nu$. However, we do not know whether  $L^\mu$, where $\mu$ is a measure on the smallest possible ordinal,  is contained in $\CAA$.

\section{Club determinacy}

We  introduce the useful auxiliary concept of Club Determinacy and show that $\CAA$ satisfies it, assuming large cardinals or $\PFA$. Roughly speaking, Club Determinacy says that definable sets of ordinals of cofinality $\omega$ in $\CAA$ either contain a club or their complement contains a club. This simplifies the structure of $\CAA$ as  we do not have any definable stationary co-stationary sets. The main results of the later sections are heavily based on this.


\begin{definition}[\cite{MR556893}]\label{cd}
A first order structure $\mm$ is {\em club determined\footnote{In \cite{MR556893} the name ``finitely determinate" is used.}}
  if \begin{equation}\label{cd1}
\mm\models
\forall\bx[\aaqs\phi(\bx,s,\bt)\vee
\aaqs\neg\phi(\bx,s,\bt)],
\end{equation} where $\phi(\bx,s,\bt)$ is any formula in $\LL(\aaq)$ and $\bt$ is a finite sequence of countable subsets of $M$.
\end{definition}

On a club determined structure the quantifiers $\stat$ (``stationarily many") and $\aaq$ (``club many") coincide on definable sets. The truth of $\aaqs\phi(s,{\bb},\bt)$ in a structure $\mm$ can be written in the form of a two-person perfect information zero-sum game $G(\phi,\mm,{\bb},\bt)$: the players alternate to pick elements $a_0,a_1,\ldots$ from $M$. After $\omega$ moves Player II wins if $s=\{a_0,a_1,\ldots\}$ satisfies $\phi(s,{\bb},\bt)$ in $\mm$. A structure $\mm$ is club determined if and only if the game $G(\phi,\mm,\bb,\bt)$ is determined for all formulas $\phi$ and all parameters $\bb$. Hence the name.

There are several results in \cite{MR556893} suggesting that club determined structures have a `better' model theory than arbitrary structures. For a start, every consistent first order theory has a club determined model. Moreover, every club determined uncountable model has an $\LL(\aaq)$-elementary submodel of cardinality $\aleph_1$, while for arbitrary structures this cannot be proved in ZFC. It fails if $V=L$ (\cite{MR506381}), but  holds if we assume $\PFA^{++}$ (folklore).
%

\begin{lemma}\label{single}
If a first order structure $\mm$ is {club determined}, then  
 \begin{equation}\label{490002}
\mm\models
\forall\bx[\aaq \bs\phi(\bx,\bs,\bt)\vee
\aaq \bs\neg\phi(\bx,\bs,\bt)],
\end{equation}
 where $\phi(\bx,\bs,\bt)$ is any formula in $\LL(\aaq)$ and $\bt$ is a finite sequence of countable subsets of $M$.
\end{lemma}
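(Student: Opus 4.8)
The plan is to prove \eqref{490002} by induction on the length $n$ of the tuple $\bs = (s_1, \ldots, s_n)$, with the base case $n = 1$ being exactly the hypothesis that $\mm$ is club determined (Definition~\ref{cd}). So the real content is the inductive step: assuming the dichotomy holds for tuples of length $n$, I would establish it for tuples of length $n+1$. The key observation driving everything is that $\aaq \bs$ is an abbreviation for $\aaq s_1 \cdots \aaq s_n$, so I want to peel off one quantifier at a time and reduce club determinacy of a block to iterated club determinacy of single quantifiers.

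First I would fix $\bx$ and write $\aaq (s_1, \ldots, s_{n+1}) \phi$ as $\aaq s_1 \, \psi$ where $\psi(\bx, s_1, \bt) := \aaq (s_2, \ldots, s_{n+1}) \phi(\bx, s_1, \ldots, s_{n+1}, \bt)$. Here I must be careful: to apply the single-quantifier club determinacy to $\psi$, I need $\psi$ itself to be a legitimate $\LL(\aaq)$-formula with $s_1$ adjoined to the parameter list of countable sets --- and indeed it is, since $s_1$ ranges over countable subsets and $\psi$ is built from $\phi$ by prefixing $\aaq$-quantifiers. By the inductive hypothesis applied uniformly (with $s_1$ now a fixed countable-set parameter alongside $\bt$), for each value of $s_1$ we get the dichotomy $\aaq(s_2,\ldots,s_{n+1})\phi \vee \aaq(s_2,\ldots,s_{n+1})\neg\phi$, i.e. $\psi \vee \neg\psi$ holds, but this is trivially true and is not yet what I need; the useful consequence is that the inductive hypothesis lets me treat the inner block as a single club-determined quantifier.

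The cleaner route, and the one I would actually carry out, is to apply single-quantifier club determinacy to the formula $\psi$ itself: since $\mm$ is club determined, $\mm \models \aaq s_1\, \psi \vee \aaq s_1\, \neg\psi$. The first disjunct is exactly $\aaq \bs \phi$ with $\bs = (s_1, \ldots, s_{n+1})$. The work is then to show the second disjunct, $\aaq s_1 \neg\psi$, implies $\aaq \bs \neg\phi$. Now $\neg\psi$ is $\neg \aaq(s_2,\ldots,s_{n+1})\phi$, which by the inductive hypothesis (dichotomy for length $n$, applied with $s_1$ as a parameter) is equivalent on $\mm$ to $\aaq(s_2,\ldots,s_{n+1})\neg\phi$. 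Substituting this equivalence under the outer $\aaq s_1$ --- which is legitimate because $\mm$ satisfies both the universal-over-$s_1$ version of the length-$n$ dichotomy and we can transport provable equivalences through $\aaq$ using axioms (A3)--(A4) --- yields $\aaq s_1 \aaq(s_2,\ldots,s_{n+1})\neg\phi = \aaq\bs\neg\phi$. This completes the step.

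The main obstacle I anticipate is the substitution in the last step: replacing $\neg\psi$ by the equivalent formula $\aaq(s_2,\ldots,s_{n+1})\neg\phi$ underneath the quantifier $\aaq s_1$. This is sound only because the equivalence $\neg\psi \leftrightarrow \aaq(s_2,\ldots,s_{n+1})\neg\phi$ holds \emph{for every} value of $s_1$ (the inductive dichotomy is stated with a universal quantifier over the free first-order variables, and here $s_1$ plays the role of a set-parameter that we must quantify over universally). I would therefore be careful to state the inductive hypothesis in the strong form with the leading $\forall$ over all relevant parameters so that the equivalence is available uniformly in $s_1$, after which monotonicity of the club filter (axioms (A3), (A4)) guarantees that $\aaq s_1$ respects the equivalence. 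Everything else is bookkeeping about the abbreviation $\aaq\bs = \aaq s_1\cdots\aaq s_n$ and the fact that negation distributes over the quantifier block in the expected way via the $\stat/\aaq$ duality, which is exactly where club determinacy (making $\stat$ and $\aaq$ coincide on definable sets) is doing the essential work.
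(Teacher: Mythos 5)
Your proposal is correct and follows essentially the same route as the paper's own proof: induction on the length of the quantifier block, applying single-quantifier club determinacy to the compound formula $\aaq s_2\ldots\aaq s_{n}\phi$ with $s_1$ as the quantified variable, and then pushing the induction hypothesis (with $s_1$ as a countable-set parameter) under the outer $\aaq s_1$ via monotonicity of the club filter. The extra care you take over the substitution step is sound but is exactly the implicit content of the paper's one-line ``whence by the Induction Hypothesis''.
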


\begin{proof}
Suppose $\phi(\bx,s_1,\ldots,s_n,\bt)$ is a formula in $\LL(\aaq)$, $\bt$ is a finite sequence of countable subsets of $M$, and $\bx$ is a finite sequence of elements of $M$. We use induction on $n$. If $n=1$, the claim is true by assumption. Suppose then $n>1$ and $\mm\models\neg\aaqs_1\aaqs_2\ldots\aaqs_n\phi(\bx,s_1,\ldots,s_n,\bt)$. By the assumption (\ref{cd1}),
$$\mm\models
\aaq s_1\neg\aaq s_2\ldots\aaq s_n\phi(\bx,s_1,\ldots,s_n,\bt),$$
whence by the Induction Hypothesis,
$$\mm\models
\aaq s_1\aaq s_2\ldots\aaq s_n\neg\phi(\bx,s_1,\ldots,s_n,\bt).$$
\end{proof}

\begin{definition}
We say that the inner model $\CAA$ is {\em club determined}, or that \emph{Club Determinacy} holds, if every level $(J'_\alpha,\in,{\tr}\rest\alpha)$ in the construction of $\CAA$ is club determined as a first order structure, i.e. for all $\alpha$: 
\begin{equation}\label{cdehto}
(J'_\alpha,\in,{\tr}\rest\alpha)\models
\forall\bx[\aaqs\phi(\bx,\bt,s)\vee
\aaqs\neg\phi(\bx,\bt,s)],
\end{equation} where $\phi(\bx,\bt,s)$ is any formula in $\LL(\aaq)$ and $\bt$ is a finite sequence of countable subsets of $J'_\alpha$.  We say that  $\CAA$ is {\em club determined for $\phi(\bx,\bt,s)$}, or that \emph{Club Determinacy for $\phi(\bx,\bt,s)$} holds, if (\ref{cdehto}) holds (at least) for the formula $\phi(\bx,\bt,s)$.\end{definition}

Intuitively speaking, if $\CAA$ is club determined, its definition is  more 
robust---the quantifier $\aaq$ is  more lax than it would be otherwise, and in consequence, $\CAA$ is a little easier to compute.

We consider Club Determinacy also with the quantifier $\aaq$ interpreted as $\aaqd$. We say that 
$C({\aaq}_{\delta})$ satisfies \emph{$\delta$-Club Determinacy (for $\phi$)} if it satisfies Club Determinacy (for $\phi$) with $\aaq$ replaced by $\aaqd$.

The main technical result of this paper  says that
if there are a proper class of Woodin cardinals, then
 $\CAA$ is club determined (Theorem~\ref{main1}). We prove the same conclusion also under the alternative assumption of $\PFA$ (Theorem~\ref{main2}). In view of the below Theorem~\ref{tepois}   some large cardinal 
 assumption (in $V$ or in an inner model)  is necessary for Club Determinacy. Of course, a
 proper class of measurable cardinals, as in Theorem~\ref{tepois},  is a much weaker assumption than a proper class of Woodin cardinals, and we do not know the exact large cardinal assumption needed here.

\subsection{Club determinacy from  Woodin cardinals}

We are going to prove Club Determinacy in two cases. The first case is a proper class of  Woodin cardinals. This will be the topic of the current section. In the next section we use the assumption $\PFA$. 


Suppose $\delta$ is a Woodin cardinal. We use $\Pw_{<\delta}$ to denote the stationary tower forcing at $\delta$ and $Q_{<\delta}$ to denote the corresponding countable stationary tower forcing. For details concerning the stationary tower we refer to \cite{MR2069032}.

Here is a sketch of the proof of Club Determinacy. We look at the earliest stage at which Club Determinacy fails for $\CAA$. Let us suppose it fails because a set 
\begin{equation}\label{early}
S=\{s\in \Pw_{\omega_1}(J'_\alpha) : (J'_\alpha,\in,{\tr}\rest\alpha)\models\phi(\ba,s,\bt)\}
\end{equation}
is stationary co-stationary, where $\ba$ is a finite sequence of elements of $J'_\alpha$ and $\bt$ is a finite sequence of countable subsets of $J'_\alpha$. We may assume that $\alpha$ and $\phi(s,\bx,\bt)$ are minimal for which this happens. We show with a separate argument that we can assume w.l.o.g. that $|\alpha|=\aleph_1$ and $\delta^1_2 =\omega_2$, where $\delta^1_2=\sup\{\xi : \xi \mbox{ is the length of a }\bm{\Sigma}^1_2 \mbox{- prewell\-ordering}\}$. Let now $\delta$ be a Woodin cardinal. We force with $Q_{<\delta}$ and obtain  the associated generic embedding $j:V\to M\subseteq V[G]$. Recall that $j(\omega_1)=\delta$ and ${}^\omega M\subseteq M$. In $M$ the set $j(S)$ is, in the sense of $M$, the set of $s\in \Pw_{\omega_1}(J'_\gamma)$ such that   $(J'_\gamma,\in,{\tr}\rest\gamma)\models\phi(j(\ba),s,j(\bt))$, where $\gamma=j(\alpha)$. We use the minimality of $\alpha$ 
and $\phi$ to argue that $({J'_\gamma})^M$ is the $\gamma^{\mbox{\tiny th}}$ level, which we denote  $J^*_\gamma$, of the hierarchy of $C({\aaq}_{\delta})$ in $V$. We also show that $j(\ba)$ is an element $a^*$ of $V$, and $j(\bt)$ is an element $t^*$ of $V$. 


We now argue that we can pick a bijection  $h:\omega_1\rightarrow J'_\alpha$ so that also $j(h)\in V$ and it is independent of the  generic $G$. Hence $S^\ast=\{\beta<\omega_1:h[\beta]\in S\}$  is a stationary co-stationary subset of $\omega_1 $. So also $j(S^\ast)$ is in $V$ and $j(S^\ast)$ is independent of $G$. Now we can pick $G$ such that $S^\ast\in G$ , which implies $\omega_1\in j(S^\ast)$ and another generic filter $G$ such that $\omega_1-S^\ast \in G$ which implies $\omega_1\not\in j(S^\ast)$, a contradiction. 
%
 A detailed proof is below in Theorem~\ref{main1}.

The following general fact about forcing will be used below:

\begin{lemma}\label{generalnonsense}Suppose $\delta$ is a regular cardinal, $\oP$ is a forcing notion such that $|\oP|=\delta$, and $G$ is $\oP$-generic.  If $\delta$ is still a regular cardinal in $V[G]$, then for all $N\in V$, every club of  $(\Pw_{\delta}(N))^{V}$ 
is stationary in $(\Pw_{\delta}(N))^{V[G]}$.  
\end{lemma}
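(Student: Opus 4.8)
The plan is to argue by contradiction, using the standard generating functions for the club filter on $\P_\delta(N)$. Recall that for $\delta$ regular uncountable a set is club in $\P_\delta(N)$ iff it contains a set of the form $C_f=\{x\in\P_\delta(N):f[[x]^{<\omega}]\subseteq x\}$ for some $f:[N]^{<\omega}\to N$, and that these $C_f$ generate the club filter, so a set is stationary iff it meets every $C_f$. So I would fix a club $C$ of $(\P_\delta(N))^V$ and suppose toward a contradiction that $C$ is non-stationary in $V[G]$. Then there is $f\in V[G]$ with $f:[N]^{<\omega}\to N$ and $C\cap C_f=\emptyset$, i.e.\ no $x\in C$ is closed under $f$. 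Note first that every $x\in\P_\delta(N)^V$ still has cardinality $<\delta$ in $V[G]$, so $\P_\delta(N)^V\subseteq\P_\delta(N)^{V[G]}$ and the statement $C\cap C_f=\emptyset$ makes sense.

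Next I would fix, in $V$, a name $\dot f$ for $f$ with $1\Vdash\dot f:[\check N]^{<\omega}\to\check N$, a large $\theta$, a well-order $<_\theta$ of $H(\theta)^V$, and an enumeration $\langle p_\zeta:\zeta<\delta\rangle$ of $\oP$ (possible since $|\oP|=\delta$). In $V$ I build an increasing continuous $\prec$-chain $\langle Y_\xi:\xi<\delta\rangle$ of elementary submodels of $(H(\theta)^V,\in,<_\theta)$ with $\dot f,\oP,N,C\in Y_0$, with $\langle Y_\eta:\eta\le\xi\rangle\in Y_{\xi+1}$, $p_\xi\in Y_{\xi+1}$, and $|Y_\xi|<\delta$. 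Here regularity of $\delta$ in $V$ keeps $|Y_\xi|<\delta$ at limits and yields a club $E\subseteq\delta$ along which $Y_\xi\cap\delta=\xi$. Put $x_\xi=Y_\xi\cap N$. Since $C$ contains some $C_{f_C}$ with $f_C$ definable from $C$ and $<_\theta$, we have $f_C\in Y_\xi$, so each $x_\xi$ is closed under $f_C$ and has size $<\delta$, whence $x_\xi\in C$; moreover $\langle x_\xi:\xi<\delta\rangle$ is increasing, continuous, lies in $V$, and $\oP=\bigcup_{\xi<\delta}(\oP\cap Y_\xi)$.

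Now I would run a pressing-down argument in $V[G]$. For each $\xi$, since $x_\xi\in C$ is not closed under $f$, choose $a_\xi\in[x_\xi]^{<\omega}$ with $f(a_\xi)\notin x_\xi$. By continuity of the chain, for each limit $\xi$ every element of the finite set $a_\xi$ enters at an earlier stage, so $a_\xi\subseteq x_{h(\xi)}$ for some $h(\xi)<\xi$; thus $h$ is regressive on the stationary set of limit points of $E$. As $\delta$ is regular in $V[G]$, Fodor's lemma gives a stationary set on which $h$ is constant, say with value $\eta^*$, so $a_\xi\in[x_{\eta^*}]^{<\omega}$ there. Since $|[x_{\eta^*}]^{<\omega}|<\delta$ while the stationary set has size $\delta$, a further thinning (again using regularity of $\delta$ in $V[G]$) gives a stationary $S'$ and a fixed $a^*$ with $a_\xi=a^*$ for all $\xi\in S'$; hence the single element $b^*:=f(a^*)$ satisfies $b^*\notin x_\xi$ for all $\xi\in S'$.

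The crux, and the one place where $|\oP|=\delta$ is essential, is to contradict this by locating $b^*$ inside the chain. Since $a^*\subseteq x_{\eta^*}\subseteq Y_{\eta^*}$ and $\dot f\in Y_{\eta^*}$, the name $\dot f(\check a^*)$ lies in $Y_{\eta^*}$, and $b^*=(\dot f(\check a^*))_G$. I pick $p\in G$ deciding $\dot f(\check a^*)=\check b^*$; by $\oP=\bigcup_\xi(\oP\cap Y_\xi)$ we get $p\in Y_\zeta$ for some $\zeta<\delta$, which we may take above $\eta^*$. Then $H(\theta)^V$ satisfies ``there is a unique $b$ with $p\Vdash\dot f(\check a^*)=\check b$'' with all parameters in $Y_\zeta$, so by elementarity that witness $b=b^*$ lies in $Y_\zeta$, whence $b^*\in Y_\zeta\cap N=x_\zeta$. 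Choosing $\xi\in S'$ with $\xi>\zeta$ gives $b^*\in x_\zeta\subseteq x_\xi$, contradicting $b^*\notin x_\xi$, and completing the proof. I expect the main obstacle to be precisely this final step: closure of the $x_\xi$ under the ground-model structure is automatic, but catching the \emph{generic} value $f(a^*)$ back inside some level $x_\zeta$ fails for larger forcings, and it is the hypothesis $|\oP|=\delta$ (together with regularity of $\delta$ on both sides) that forces every deciding condition, and hence the value it fixes, to appear at some stage $<\delta$.
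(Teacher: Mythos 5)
Your argument is essentially the paper's. Both proofs build in $V$ an increasing continuous chain of length $\delta$ of elementary submodels of size $<\delta$ whose traces on $N$ land in $C$ and which together absorb all of $\oP$ (this is exactly where $|\oP|=\delta$ enters), and then use the regularity of $\delta$ in $V[G]$ to find a level below $\delta$ that already catches the generic object. The paper does this directly: it finds a single $\gamma<\delta$ such that $G$ meets every dense set lying in $M_\gamma$ inside $M_\gamma$, so that $M_\gamma\cap N\in C$ is closed under the generic algebra. You instead argue by contradiction and run Fodor's lemma in $V[G]$ to stabilize the offending finite tuple $a^*$ before catching the single value $f(a^*)$; the detour through Fodor is harmless but unnecessary, and your final catching step (a deciding condition lies in some $Y_\zeta$, hence so does the decided value) is the same mechanism as the paper's and is correctly executed.

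The one point you should repair is the opening reduction. For $\delta>\omega_1$ it is \emph{not} true that every club of $\P_\delta(N)$ contains a set of the form $C_f$ with $f\colon[N]^{<\omega}\to N$: for instance $\{x\in\P_{\omega_2}(\omega_3): x\cap\omega_2\in\omega_2\}$ is club, but for any such $f$ the closure of $\{\omega_1\}$ under $f$ is a countable member of $C_f$ whose intersection with $\omega_2$ contains $\omega_1$ and so is not an ordinal. Since in this lemma $\delta$ is an arbitrary regular cardinal (a Woodin cardinal in the application), this touches both places where you invoke the claim. Both are routine to fix. For the passage from ``$C$ is non-stationary in $V[G]$'' to a single function, either adopt the stationary-tower convention that stationarity \emph{means} meeting every $C_f$ (which is all the application requires), or use Menas's theorem and work with a set-valued $F\colon[N]^{<\omega}\to\P_\delta(N)$; your pressing-down and catching arguments adapt, catching all $<\delta$ many elements of $F(a^*)$ at some level by regularity of $\delta$ in $V[G]$. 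For the membership $x_\xi\in C$, drop the appeal to $f_C$ entirely: interleave elements of $C$ into the construction of the chain and use the $<\delta$-closure of $C$ at limit stages, which is exactly how the paper arranges $M_\alpha\in C$.
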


\begin{proof} Without loss of generality, $N$ is an ordinal $\beta$. Let $C$ be a club in $(\Pw_{\delta}(N))^{V}$. Suppose $\tau$ is a forcing term for an algebra on $\beta$. Let $\mu$ be a big enough regular cardinal.
We build in $V$ a chain $M_\a, \a<\delta$, of elementary substructures of $H(\mu)^V$ of cardinality $<\delta$ in such a way that $\oP,\tau,\beta\in M_0$, $M_\alpha\in C$, $M_\nu=\bigcup_{\a<\nu}M_\a$ for limit $\nu$, and $\oP\subseteq \bigcup_{\a<\delta}M_\a$.  Let $G$ be $\oP$-generic. Since $\delta$ is regular in $V[G]$, we can construct, in $V[G]$, an ordinal $\g<\delta$ such that if $D\subseteq\oP$ is a dense set in $M_\g$, then $D\cap G\cap M_\g\ne\emptyset$. Now $M_\g\cap\beta\in V$ is closed under the algebraic operations of the value 
$[\tau]_G$ of $\tau$ in $V[G]$.
\end{proof}


The main technical tool in proving the Club Determinacy is the following result about preservation of stationarity in the forcing $Q_{<\lambda}$:

\begin{proposition}\label{svsvg} Suppose that $\lambda$ is Woodin and $G$ is $Q_{<\lambda}$ generic over $V$. If $S\subseteq \lambda$ and $S\in V$ is stationary in $V$ then $S$ is stationary in $V[G]$.
\end{proposition}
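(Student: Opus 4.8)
The plan is to use the generic elementary embedding associated with the countable stationary tower together with the fact that, below a Woodin cardinal, $Q_{<\lambda}$ collapses everything below $\lambda$ to be countable. Let $G$ be $Q_{<\lambda}$-generic and let $j:V\to M\subseteq V[G]$ be the induced embedding, so that $M$ is well-founded (this is where Woodinness of $\lambda$ enters), $M^{\omega}\cap V[G]\subseteq M$, $\mathrm{crit}(j)=\omega_1^V$ and $j(\omega_1^V)=\lambda$. Since every condition $a$ has $\bigcup a\in V_\lambda$ and the generic requires $j[\bigcup a]$ to be countable in $M$, a density argument shows that every $V$-cardinal below $\lambda$ is countable in $V[G]$; hence $\lambda=\omega_1^{V[G]}=\omega_1^M$ is regular in $V[G]$. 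Thus the assertion is exactly that the countable tower preserves stationary subsets of the very cardinal $\lambda$ it turns into $\omega_1^{V[G]}$. (Note that $\lambda$ being regular in $V[G]$ with $|Q_{<\lambda}|=\lambda$ already puts us in the situation of Lemma~\ref{generalnonsense}, which yields the weaker conclusion that every $V$-club in $\lambda$ stays stationary; the present statement about arbitrary stationary $S$ is stronger and requires the embedding.)

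First I would reduce to a clean combinatorial problem. Working in $V[G]$, fix a club $C\subseteq\lambda$; I must find $\alpha\in S\cap C$. Building a continuous $\in$-chain $\langle N_\xi:\xi<\lambda\rangle$ of countable (in $V[G]$) elementary submodels of some $H(\theta)^{V[G]}$ with $C\in N_0$, and setting $\delta_\xi=\sup(N_\xi\cap\lambda)$, one checks each $\delta_\xi\in C$ (since $C\cap N_\xi$ is cofinal in $\delta_\xi$ and $C$ is closed), so $\{\delta_\xi:\xi<\lambda\}$ is a $V[G]$-club contained in $C$. Hence it suffices to produce a single countable $N\prec H(\theta)^{V[G]}$ with $C\in N$ and $\sup(N\cap\lambda)\in S$.

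The heart of the argument is to manufacture such an $N$ from the stationarity of $S$ in $V$. Fix in $V$ an elementary submodel $N^*\prec H(\lambda^+)^V$ of size less than $\lambda$, containing $S$, $Q_{<\lambda}$ and a name $\dot C$ and condition $a_0$ with $a_0\Vdash\text{``}\dot C$ is club'', and such that $\delta:=\sup(N^*\cap\lambda)\in S$ (possible since these suprema form a club of $\lambda$ and $S$ is stationary). The difficulty is that neither $N^*$ nor $\dot C$ has rank below $\lambda$, so they cannot be placed inside tower conditions, whose members live in $V_\lambda$; this is the \emph{main obstacle}. It is overcome exactly as in the proof that semiproper forcing preserves stationary subsets of $\omega_1$: one works instead with the condition $a=\{x\cap V_\lambda : x\prec N^*\text{ countable, correct}\}\in V_\lambda$, a stationary set of countable subsets of $V_\lambda$ refining $a_0$, and handles $\dot C$ semantically through the model's own generic rather than syntactically. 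The key point is that $a$ is $(N^*,Q_{<\lambda})$-semigeneric, i.e.\ $a$ forces $\delta$ to remain the supremum of $N^*[\dot G]\cap\lambda$; combined with $\dot C\in N^*$ being forced unbounded in $\lambda$, this forces $\dot C\cap\delta$ to be unbounded in $\delta$, whence $\delta\in\dot C$ by closedness. Since $\delta\in S$, any generic through $a$ places $\delta$ in $S\cap C$, as required.

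The step I expect to be delicate is the semigenericity of $a$: verifying that the countable approximations captured by $a$ genuinely pin down $\sup(N^*\cap\lambda)$ and do not push it upward, so that the witness lands exactly in $S$. This is precisely the stationary-set-preserving (semiproperness) phenomenon for $Q_{<\lambda}$, and it is where the closure $M^{\omega}\cap V[G]\subseteq M$ and the well-foundedness of the generic ultrapower guaranteed by the Woodinness of $\lambda$ are essential.
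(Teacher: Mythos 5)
Your overall strategy --- reduce to producing a condition that forces some $\delta\in S$ into the generic club, and obtain that condition from elementary submodels whose trace on $\lambda$ has supremum in $S$ --- is the same semiproperness-flavoured idea as the paper's proof. But there is a genuine gap exactly at the step you yourself flag as delicate: you assert that the condition $a=\{x\cap V_\lambda : x\prec N^*\ \mbox{countable}\}$ is $(N^*,Q_{<\lambda})$-semigeneric, and nothing in the proposal proves this; for $a$ as literally defined it is not true in general. The generic requirement $j[\bigcup a]\in j(a)$ only says that $j[N^*\cap V_\lambda]$ is the trace of \emph{some} countable elementary submodel of $j(N^*)$; it does not prevent $N^*[\dot G]$ from acquiring ordinals in the interval $[\delta,\lambda)$, which is exactly what would push the limit point of $\dot C$ you produce above $\delta$ and out of $S$. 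Moreover, the closure $M^\omega\cap V[G]\subseteq M$ and the well-foundedness of the generic ultrapower, which you cite as the source of the semigenericity, are beside the point: they are consequences of Woodinness established separately and do not yield preservation of stationary subsets of $\lambda$ (the paper's proof of this proposition never invokes the embedding at all).

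What is actually needed, and what the paper supplies, is a refinement of your $a$. One takes $N\prec H(\kappa)$ with $N\cap\lambda=\delta\in S$ and $V_\delta\subseteq N$, fixes for each $\alpha<\lambda$ a maximal antichain $D_\alpha$ of conditions deciding an element of the club name above $\alpha$, and restricts to the set $T$ of countable $X\prec V_{\delta+1}$ that \emph{catch} $D_\alpha$ below $\delta$ for every $\alpha\in X\cap\delta$ (i.e.\ contain some $q\in D_\alpha\cap X\cap V_\delta$ with $X\cap\bigcup q\in q$). Proving that $T$ is stationary --- so that it is a legitimate condition in $Q_{<\lambda}$ --- is the technical heart of the argument (Lemma~\ref{sss} and the claim following it); it rests on the fact that each $D_\alpha$ is semiproper at unboundedly many $\gamma<\lambda$, the analogue of Theorem 2.5.9 of \cite{MR2069032}, and this is where the Woodinness of $\lambda$ genuinely enters. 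Once $T$ is a condition, a Fodor pressing-down argument on the catching witnesses shows $T\Vdash\delta\in\dot C$, which finishes the proof. Without the catching refinement and a proof of its stationarity, your argument does not close.
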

\begin{proof} Suppose that $S$ is not stationary in $V[G]$. Let $\tau$ be a $Q_{<\lambda}$ term for a club subset of $\lambda$ forced to be disjoint from $S$. To simplify notation we assume that the maximal condition forces that $\tau\cap S=\emptyset$. For every $\alpha<\lambda$ let $D_\alpha$ be a maximal anti-chain of conditions which force some ordinal $>\alpha$ into $\tau$. For every $\alpha<\lambda$ let $F_\alpha$ be the function defined on $D_\alpha$ such that $F_\alpha(q)$ is the minimal ordinal above $\alpha$ which is forced into $\tau$ by $q$.
 Let $N$ be an elementary substructure of $H(\kappa)$ for a big enough $\kappa$ such that $\langle D_\alpha :\alpha<\lambda\rangle$ and other relevant elements of the proof are in $N$. Also we require that $N\cap\lambda$ is an ordinal $\delta\in S$ and that $V_\delta\subseteq N$. Clearly $V_\delta$ is closed under $F_\alpha$ for every $\alpha<\delta$.

We use the following definition:
\begin{definition} Let $D$  be a maximal anti-chain in $Q_{<\lambda}$. We say that $X\in \Pw_{\omega_1}(V_\lambda)$ \emph{catches} $D$ \emph{below} $\rho$ if   there is $q\in D\cap X\cap V_\rho$ such that $X\cap\cup q\in q$.
\end{definition}

The following definition is a modification to $Q_{<\lambda} $ of definition 2.5.1 of  \cite{MR2069032}.

\begin{definition} Let $D$ be a maximal anti-chain in $Q_{<\lambda}$. We say that $D$ is \emph{semiproper} \emph{at} $\rho$ if for every $X\prec V_{\rho+2}$, $X$ countable,  there is a countable  $Y\prec V_{\rho+2}$ such that $Y$ catches $D$ below $\rho$ and $Y$ end extends $X$ below $\rho$ (i.e. if $\alpha\in (Y-X)\cap\rho$ then 
$\alpha\geq\sup(X\cap\rho)$).
\end{definition}

The following fact follows immediately  from the modification to $Q_{<\lambda}$ of theorem 2.5.9 of \cite{MR2069032}:

\begin{claim} For every $D_\alpha$ there are unboundedly many inaccessible cardinals $\gamma<\lambda$ such that $D_\alpha$ is semiproper at $\gamma$.
\end{claim}

From  $N\prec H(\kappa)$ and $N\cap\lambda=\delta$ it follows that for every $\alpha<\delta$, there are unboundedly many $\gamma<\delta$ such that $D_\alpha$ is semiproper at $\gamma$.

In the following arguments we assume that $V_{\delta+2}$ is also endowed with a fixed well order.

\begin{lemma}\label{sss} For every countable $X\prec V_{\delta+2}$ such that
 $\langle D_\alpha\cap V_\delta :\alpha<\delta\rangle\in X$ there is a countable $Y\prec V_{\delta+2}$ such that $X\subseteq Y$ and  for every $\alpha\in Y\cap\delta$, $Y$ catches $D_\alpha$ below $\delta$.
\end{lemma}
\begin{proof} We define by induction an increasing sequence   $\langle X_n :n<\omega\rangle$ of countable elementary substructures of $V_{\delta+2}$ where $X_0=X$, a sequence 
$\langle \alpha_n:n<\omega\rangle $ of ordinals less than $\delta$  such that $\alpha_n\in X_n$, and an increasing  sequence $\langle \gamma_n : n<\omega\rangle$, $\gamma_n\in X_n$, such that $D_{\alpha_n}$ is semiproper at $\gamma_n$. By dovetailing we make sure that for every $n<\omega$ and $\alpha\in X_n\cap \delta$ there is $k$ such that $\alpha=\alpha_k$. Also we keep the inductive assumption that $X_{n+1}$ catches $D_{\alpha_n} $ below $\gamma_n$ and that it  is an end extension of $X_n$ below $\gamma_n$. So it follows that $X_{n+1}$ continues to catch $D_{\alpha_k}$ below $\gamma_k$ for all $k<n$.

Given $X_n$. Pick $\alpha_n\in X_n$ so as to continue our dovetailing process. Let $\gamma_n$ be an element of $X_n$ above $\gamma_{n-1}$  such that $D_{\alpha_n}$ is semiproper at $\gamma_n$. Such a $\gamma_n$ exists in $X_n$ since $X_n$ is an elementary substructure of $V_{\delta+2}$ and $D_{\alpha_n}\in X_n$. (Recall that $\langle D_\alpha:\alpha<\lambda\rangle \in X\subseteq X_n$.)

Since $X_n\prec V_{\delta+2}$, we have $R=X_n\cap V_{\gamma_n+2}\prec V_{\gamma_n+2}$, hence there is a countable $Z\prec V_{\gamma_n+2}$ such that $R\subseteq Z$, $Z$ is an end extension of $R$ below $\gamma_n$, and $Z$ catches $D_{\alpha_n} $ below $\gamma_n$.   We define $X_{n+1}$ to   be all the elements of $V_{\delta+2}$ which are definable in $V_{\delta+2}$ from $X_n\cup Z$.   Clearly $X_{n+1}\prec V_{\delta+2}$.
\begin{claim} $X_{n+1}\cap V_{\gamma_n}=Z\cap V_{\gamma_n}$.
\end{claim}
\begin{proof} Clearly $Z\cap V_{\gamma_n}\subseteq X_{n+1}\cap V_{\gamma_n}$. For the other direction let $a\in X_{n+1}\cap V_{\gamma_n}$. Then $a$ is definable from some elements $\vec{b}$ of $X_n$    and an element $c$  of $Z$ by a formula $\phi(x,\vec{b},c)$. (It is enough to consider a single element $c$ of $Z$, since $Z$ is closed under forming finite sequences.)  Consider the following function $h:V_{\gamma_n}\rightarrow V_{\gamma_n}$. We let
$h(y)$ to be  the unique element $d$ of $V_{\gamma_n}$ satisfying $\phi(d,\vec{b},y)$ if there is such a unique element, and $0$ otherwise. Now $h\in V_{\delta+2}$ and $h$ is definable in $V_{\delta+2}$ from $\vec{b}$. Moreover, it is a function from $V_{\gamma_n}$ to $V_{\gamma_n}$. So $h\in X_n\cap V_{\gamma_n+2}$.  So $h\in Z$. Clearly $a=h(c)$. Hence $a=h(c)\in Z$.
\end{proof}

Continuing the proof of Lemma~\ref{sss}, it follows from the claim that $X_{n+1}$ end extends $X_n$ below $\gamma_n$ and catches $D_{\alpha_n}$
 From our inductive assumptions it follows that  $X_{n+1}$ catches $D_{\alpha_k}$ below $\gamma_k$ for all $k\leq n$.
Now, if we define $Y=\cup_n X_n$, then $Y$  satisfies the requirements of the lemma.
\end{proof}

We continue the proof of Proposition~\ref{svsvg} with the following:

\begin{claim} The set $T=$
\begin{equation}\label{T}
\begin{array}{lcl}
\{ X\in \Pw_{\omega_1}(V_{\delta+1}) : X\prec V_{\delta+1},
 \mbox{ $X$ catches $D_\alpha$
 below $\delta$ for every } \mbox{$\alpha\in X\cap \delta$}\}
\end{array} 
\end{equation}
  is stationary in $\Pw_{\omega_1}(V_{\delta+1})$.
\end{claim}
\begin{proof}Assume otherwise, then there is a function $g:V_{\delta+1}\rightarrow V_{\delta+1}$ such that every countable  $X\subseteq V_{\delta+1}$ which is closed under $g$ is not in $T$. The function $g$ can be coded as an element of $V_{\delta+2}$. (We use $g$ also for  the code in $V_{\delta+2}$.) Let $X$ be a countable elementary substructure of $V_{\delta+2}$ containing $g$ and the sequences $\langle D_\alpha\cap V_\delta:\alpha<\delta\rangle$.  By the above lemma, there is a countable $X\subseteq Y\prec V_{\delta+2}$ such that $Y$ catches $D_\alpha$ below 
$\delta$  for every   $\alpha\in Y$.  It is obvious that $Y\cap V_{\delta+1}\in T$, but $g\in Y$ so $Y\cap V_{\delta+1}$ is closed under the function $g$, which is a contradiction.
\end{proof}

We can now finish the proof of Proposition~\ref{svsvg}. By the above claim $T\in Q_{<\lambda}$. We claim that $T\force\delta\in \tau$. Suppose that $T'\leq T$ such that $T'$ forces  $\alpha<\delta$ to be  a bound for $\tau\cap \lambda$.  We can assume   without loss of generality that for every $Z\in T'$  $\alpha\in Z$. Since $T'\leq T$ we can also assume that every $Z\in T'$ catches $D_\alpha$ below $\delta$. For $Z\in T'$ let $G(Z)\in Z\cap D_\alpha$  witness the fact that $Z$ catches $D_\alpha$. By Fodor's lemma there is $q\in D_\alpha$ such that the set $T''=\{Z\in T' : G(Z)=q\}$ is stationary in $\cup T'$. But $T''$ forces  $T''\leq q$. But using the function $F_\alpha$ we can see that $q$ forces some ordinal above $\alpha$ to be in $\tau\cap\delta$. We have a contradiction.
\end{proof}

\begin{proposition}\label{late}Suppose $\lambda$ is Woodin and $G$ is $Q_{<\lambda}$-generic over $V$.
For every set $A$ in $V$,  if $S\subseteq \Pw_\lambda(A)$ is stationary in $V$,
 then it is a stationary subset of $\Pw_\lambda(A)$ in $V[G]$.
\end{proposition}

\begin{proof}
Without loss of generality we can assume that $\lambda\subseteq A$ and that for all $X\in S$, 
$X\cap\lambda$ is an ordinal. If $S$ is not stationary in $V[G]$,  there is an algebra $\mathcal{A}\in V[G]$ with countably many operations $\la f_n : n<\omega\ra$ where the arity of $f_k$ is $k_n$, such that no member of $S$ is closed under these operations. Let ${\tau_n}$ be a $Q_{<\lambda}$-name for ${f_n}$. In order to simplify notation we assume that the maximal condition of $Q_{<\lambda}$ forces that no member of $S$ is closed under all the functions ${\tau_n}$. For every $n$ and $k_n$-tuple $\ba$ of members of $A$, let $D_{n,\ba}$ be a maximal antichain of $Q_{<\lambda}$ of conditions which force a value for ${\tau_n}(\ba)$. Now the proof continues as the proof of Proposition~\ref{svsvg}. Especially after the model $N$ has been chosen as an elementary superstructure of a big enough $H(\theta)$ so that $A$, $\tau_n$'s , etc are all in $N$, $A\cap N\in S$, $N\cap\lambda=\delta$ for some $\delta<\lambda$, as well as $\pi:A\cap N\rightarrow \delta$, one proves the following modification  of Lemma~\ref{sss}: 
\begin{quote}
For every countable $X\prec V_{\delta+2}$ such that
 $\langle D_{n,\ba}\cap V_\delta :\ba\in(A\cap N)^{k_n}, n<\omega\rangle\in X$ there is a countable $Y\prec V_{\delta+2}$ such that $X\subseteq Y$ and  $Y$ catches $D_{n,\ba}$ below $\delta$ for every $n<\omega$ and every $\ba\in (A\cap N)^{k_n}$ with $\pi[\ba]\in Y^{k_n}$.
\end{quote} The rest is as in  Proposition~\ref{svsvg}.\end{proof}

 As above, let $\bj'_\alpha=(J'_\alpha,\in,{\tr}\rest\alpha)$ be the hierarchy of $\CAA$ in $V$.
Let $\bj^*_\alpha=(J^*_\alpha,\in,{{\tr}}^*\rest \alpha)$ be the corresponding hierarchy of $C(\aaqd)$ in $V$.
We will compare these two inner models, or rather $C(\aaqd)$ and the image of $\CAA$ under a generic ultrapower embedding.  We use $\ma\models_{\delta}\phi$ to denote $\ma\models\phi$ when we think of $\phi$ as a sentence of $\LL(\aaqd)$ rather than of $\LL(\aaq)$. 

Suppose now $\delta$ is a Woodin cardinal.
Let $G$ be $Q_{<\delta}$-generic  and $j:V\to M\subseteq V[G]$ the generic ultrapower embedding.
Let $\bj''_\alpha=(J''_\alpha,\in,{{\tr}}''\rest \alpha)$ be the hierarchy of $\CAA$ in $M$.
As a part of the proof that $\CAA$ satisfies Club Determinacy we show that this inner model   $\CAA$ in the sense of $M$ is actually the inner model $C(\aaqd)$ in the sense of  $V$ (see Proposition~\ref{thesame}). We show this by a level by level analysis of the two aa-hierarchies $(\bj''_\alpha)$  and 
 $(\bj^*_\alpha)$.

In the subsequent proofs we will use parameters from $V$ although we are dealing also with $M$. Lemma~\ref{omega2} below shows that while $j$ is certainly not definable in $V$, it maps relevant parameters to $V$. First we need an auxiliary result. The following result is a widely known folklore result, but we include a sketch of the proof for the reader's convenience:

\begin{lemma}\label{ww}Assume $x^\#$ exists for every $x\subseteq\omega$. Then 
\begin{equation}\label{876653}
\delta^1_2=\sup\{((\aleph_1^V)^+)^{L[x]} : x\subseteq\omega\}.
\end{equation}\end{lemma}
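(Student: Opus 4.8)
The plan is to recognise the right-hand side as the second uniform indiscernible and then to invoke the classical identity $\delta^1_2=u_2$. Write $\beta=\sup\{((\aleph_1^V)^+)^{L[x]}:x\subseteq\omega\}$, and let $\delta^1_2$ denote the supremum of the lengths of the ${\mathbf\Delta}^1_2$ prewellorderings of the reals. Since $x^\#$ exists for every real $x$, each $L[x]$ carries a closed unbounded class $I_x$ of Silver indiscernibles, every uncountable cardinal of $V$ belongs to $I_x$ and is inaccessible in $L[x]$, and the \emph{uniform indiscernibles} $u_1<u_2<\dots$ (the ordinals lying in $I_x$ for every real $x$) are well defined with $u_1=\aleph_1^V$. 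I would establish the lemma in the form $\delta^1_2=u_2=\beta$, treating the two equalities separately.

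For $u_2=\beta$, the inequality $\beta\le u_2$ is immediate: for each $x$ the ordinal $u_2$ is inaccessible in $L[x]$ and exceeds $u_1=\aleph_1^V$, hence $u_2>((\aleph_1^V)^+)^{L[x]}$, and taking the supremum gives $\beta\le u_2$. For $u_2\le\beta$, let $\xi<u_2$; I may assume $\xi\ge u_1$, as smaller $\xi$ are trivially below $((\aleph_1^V)^+)^{L[x]}$. Since $\xi$ is not a uniform indiscernible there is a real $x_0$ with $\xi\notin I_{x_0}$, and a short indiscernibility computation lets me write $\xi\in H:=\mathrm{Hull}^{L[x_0]}(u_1\cup\{u_1\})$, i.e.\ $\xi$ is $L[x_0]$-definable from ordinals $\le u_1$. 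Here is the key point: passing to the real $x'=x_0^\#$ (legitimate because $(x_0^\#)^\#$ exists), the set $H$ is an element of $L[x']$ and has $L[x']$-cardinality $u_1$, so $\sup(H\cap\mathrm{Ord})<((\aleph_1^V)^+)^{L[x']}$; since $\xi\in H$ this yields $\xi<((\aleph_1^V)^+)^{L[x']}\le\beta$, whence $u_2\le\beta$.

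For $\delta^1_2=u_2$, the lower bound $u_2\le\delta^1_2$ is obtained by coding ordinals below $u_2$ by reals: each $\xi<u_2$ is represented as $\tau^{L[x]}(u_1)$ for a Skolem term $\tau$, and is thereby coded by the pair $(x^\#,\tau)$; the comparison of two codes is decided inside $L[x_0\oplus x_1]$ from its indiscernibles, and because the sharp relation is absolutely definable the resulting comparison is a ${\mathbf\Delta}^1_2$ prewellordering, of length $u_2$, giving $\delta^1_2\ge u_2$.

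The step I expect to be the main obstacle is the upper bound $\delta^1_2\le u_2$, the genuinely descriptive-set-theoretic heart (the Martin--Solovay boundedness). I would obtain it from the Martin--Solovay tree: granting sharps, one builds from the normal measures on the uniform indiscernibles a tree on $\omega\times u_2$ furnishing a ${\mathbf\Pi}^1_2$-scale whose norms take values below $u_2$; since every ${\mathbf\Delta}^1_2$ prewellordering is ${\mathbf\Pi}^1_2$, its length is then at most $u_2$. I would cite the standard construction of this tree rather than reproduce it. Combining the three inequalities gives $\delta^1_2=u_2=\beta$, which is the assertion of the lemma.
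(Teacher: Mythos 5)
Your proposal is correct in outline but takes a genuinely different route from the paper, and one of its steps fails as written. The paper never mentions uniform indiscernibles: for $\delta^1_2\le\sup_x((\aleph_1^V)^+)^{L[x]}$ it observes that Kunen's proof of the Kunen--Martin theorem, applied to the Shoenfield tree (which lives in $L[x]$), bounds the rank of a wellfounded $\bsot(x)$ relation by $((\aleph_1^V)^+)^{L[x]}$; for the converse it notes that every ordinal below $((\aleph_1^V)^+)^{L[x]}$ is, by remarkability, of the form $\tau^{L[x]}(\vec{c},\aleph_1^V)$ with $\vec{c}$ a finite tuple of \emph{countable} $x$-indiscernibles (here remarkability applies outright because $((\aleph_1^V)^+)^{L[x]}$ lies below the least $x$-indiscernible above $\aleph_1^V$), and coding the pairs $(\tau,\vec{c})$ by reals gives a $\mathbf{\Delta}^1_2(x^\#)$ prewellordering of length $((\aleph_1^V)^+)^{L[x]}$. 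Your decomposition $\delta^1_2=u_2=\beta$ is a classical alternative, but it is heavier where the paper is light (you invoke the Martin--Solovay tree for $\delta^1_2\le u_2$, whereas Kunen--Martin alone already gives $\delta^1_2\le\beta$ directly) and it relocates the real work into the identity $u_2\le\beta$.

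That is where the gap is. From $\xi\notin I_{x_0}$ you cannot conclude $\xi\in\mathrm{Hull}^{L[x_0]}(u_1\cup\{u_1\})$. Writing $\xi=\tau^{L[x_0]}(\vec{c},\vec{d})$ with $\vec{c}<\xi\le\vec{d}$ indiscernibles, remarkability eliminates the upper parameters $\vec{d}$, but some entries of $\vec{c}$ may be $x_0$-indiscernibles lying strictly between $u_1$ and $\xi$, and no $x_0$-indiscernible above $u_1$ belongs to $\mathrm{Hull}^{L[x_0]}(u_1\cup\{u_1\})$ (indiscernibles are never definable from smaller ordinals together with larger indiscernibles); so, for instance, $\xi=c+1$ with $c\in I_{x_0}\cap(u_1,u_2)$ defeats the step. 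The correct statement is Solovay's lemma that every $\xi<u_2$ lies in $\mathrm{Hull}^{L[y]}(u_1+1)$ for \emph{some} real $y$ depending on $\xi$, proved by induction on $\xi$: the intermediate non-uniform indiscernibles are themselves represented over other $L[y_i]$'s and absorbed, together with the relevant sharps, into a single real. Your instinct to pass to $x_0^\#$ is the right one in spirit (indeed $((\aleph_1^V)^+)^{L[x_0]}<((\aleph_1^V)^+)^{L[x_0^\#]}$, since $L[x_0^\#]$ sees that its own $\aleph_1^+$ is inaccessible in $L[x_0]$), but by itself it only clears one successor cardinal of $L[x_0]$, not the intermediate indiscernibles; moreover the assertion that $H$ is an element of $L[x_0^\#]$ needs justification, since $L[x_0^\#]$ need not know the true set $I_{x_0}\cap u_1$. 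With Solovay's lemma cited in place of the ``short indiscernibility computation,'' your argument goes through.
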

\begin{proof}
Kunen's proof of the result of Martin (\cite{MR2907001}) to the effect that
every  well founded  $\bsot$ relation has rank $<\omega_2$, shows that 
this rank is actually less than $((\aleph_1^V)^+)^{L[x]}$, where $x$ is the real parameter of the 
$\bsot$-definition. This gives one direction of (\ref{876653}).
For the other direction, suppose  $x$ is a real and 
$\eta=((\aleph_1^V)^+)^{L[x]}$. 
Every ordinal less than $\eta$ is definable in $L[x]$ from some  
$x$-indiscernibles  $\le\aleph_1^V$. This gives the other direction and finishes the proof of
(\ref{876653}). 
%
We can define a relation  between $n$-tuples of reals coding the indiscernibles and the formula. This relation is $\Delta^1_2$ using $x^\#$ as a parameter. The rank of the relation is $\eta$ and therefore $\eta<\delta^1_2$.
\end{proof}

\begin{lemma}\label{omega2}
Assume $\delta^1_2=\omega_2$ and $\delta$ is a Woodin cardinal. Suppose we force with $Q_{<\delta}$ and   the associated generic embedding is $j:V\to M\subseteq V[G]$. Then $j\rest\omega_2\in V$.
In particular, if $s$ is a countable subset of $\omega_2$, then $j(s)\in V$. Moreover, there is $t\in V$ such that $\force_{Q_{<\delta}}j(\check{s})=\check{t}$.
\end{lemma}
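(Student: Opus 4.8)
The plan is to pin down the action of $j$ on ordinals below $\omega_2$ by means of Silver indiscernibles for the models $L[x]$, $x\subseteq\omega$. Since $\delta$ is Woodin, $x^\#$ exists for every real $x$, so Lemma~\ref{ww} applies and, together with the hypothesis $\delta^1_2=\omega_2$, gives $\omega_2=\sup\{((\aleph_1^V)^+)^{L[x]}:x\subseteq\omega\}$. First I would record the standard facts about the generic ultrapower $j:V\to M$ by $Q_{<\delta}$ (see \cite{MR2069032}): $\mathrm{crit}(j)=\omega_1^V$, $j(\omega_1^V)=\delta$, and both are forced by the maximal condition, hence independent of $G$. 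Moreover $j$ fixes every real ($j\rest\omega=\mathrm{id}$ gives $j(x)=x$), and since $L[x]$ is absolute and $\mathrm{Ord}^M=\mathrm{Ord}$, we have $(L[x])^M=L[x]$, so $j\rest L[x]:L[x]\to L[x]$ is elementary; consequently $V$ and $M$ compute the same class of $x$-indiscernibles.

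The main computation is as follows. Fix $\gamma<\omega_2$. By the above there is a real $x$ with $\gamma<\eta_x:=((\aleph_1^V)^+)^{L[x]}$, and, exactly as noted in the proof of Lemma~\ref{ww}, $\gamma$ is definable in $L[x]$ from a Skolem term $\sigma$ and $x$-indiscernibles $i_1<\cdots<i_k\le\aleph_1^V$, say $\gamma=\sigma^{L[x]}(i_1,\ldots,i_k)$. Applying the elementary map $j\rest L[x]$ (and $j(x)=x$, $j(\sigma)=\sigma$) yields $j(\gamma)=\sigma^{L[x]}(j(i_1),\ldots,j(i_k))$. Because $\mathrm{crit}(j)=\aleph_1^V$, we have $j(i_m)=i_m$ whenever $i_m<\aleph_1^V$, while $j(i_m)=\delta$ when $i_m=\aleph_1^V$, which can happen only for the largest one $i_k$. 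Hence $j(\gamma)=\sigma^{L[x]}(i_1,\ldots,i_{k-1},\delta)$ (and $j(\gamma)=\gamma$ if all indiscernibles are $<\aleph_1^V$), an ordinal computed entirely from the $V$-objects $x,\sigma,i_1,\ldots,i_{k-1}$ and the fixed ordinal $\delta$.

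For the conclusion, define $F$ in $V$ by letting $F(\gamma)$, for $\gamma<\omega_2$, be the value $\sigma^{L[x]}(i_1,\ldots,i_{k-1},\delta)$ computed from the $<_V$-least valid representation $\gamma=\sigma^{L[x]}(i_1,\ldots,i_k)$ of the above kind, where $<_V$ is a fixed well-order of $V$. Then $F$ is definable in $V$ from $\aleph_1^V$, $\delta$ and the relevant sharps, with no reference to $G$, and the computation shows $F(\gamma)=j(\gamma)$ for all $\gamma<\omega_2$; thus $F=j\rest\omega_2\in V$. For the ``in particular'' clause, note that a countable $s\subseteq\omega_2$ satisfies $j(s)=j[s]$: fixing a surjection $e:\omega\to s$ in $V$, elementarity and $j\rest\omega=\mathrm{id}$ give $j(s)=\mathrm{ran}(j(e))=\{j(e(n)):n<\omega\}=j[s]$. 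Therefore $j(s)=F[s]\in V$, and since $F$ is independent of $G$, setting $t:=F[s]$ gives $\force_{Q_{<\delta}}j(\check s)=\check t$.

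The main obstacle is the second step, namely justifying that $j\rest L[x]$ is an elementary map of $L[x]$ into itself that transports the indiscernible representation correctly: one must check $j(x)=x$, that $(L[x])^M=L[x]$ using $\mathrm{Ord}^M=\mathrm{Ord}$, and that indiscernibles $\le\aleph_1^V$ already reach every $\gamma<\eta_x$ (this last point is precisely what the proof of Lemma~\ref{ww} supplies). Once the action of $j$ on these finitely many indiscernibles is fixed ($\mathrm{id}$ below $\aleph_1^V$, and $\aleph_1^V\mapsto\delta$), the remainder is bookkeeping and the independence of $G$ follows because $F$ was defined purely from $V$-parameters.
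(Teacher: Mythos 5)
Your proof is correct and follows essentially the same route as the paper: both reduce the computation of $j\rest\omega_2$ to the models $L[x]$ for reals $x$ via Lemma~\ref{ww}, using that $j$ fixes every real and every ordinal below its critical point $\aleph_1^V$. The only difference is one of packaging: the paper codes each $\alpha<\omega_2$ by a one-one map $f(\alpha):\alpha\to\aleph_1^V$ chosen inside $L[g(\alpha)]$ and recovers $j\rest\alpha$ as $j(f(\alpha))^{-1}\circ f(\alpha)$, whereas you unwind the same information through Skolem terms over indiscernibles $\le\aleph_1^V$, which has the mild advantage of making the independence from $G$ (needed for the final clause) completely explicit.
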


\begin{proof} Let, by Lemma~\ref{ww}, $g\in V$ be a function on $\omega_2$ such that for all
$\alpha<\omega_2$, $g(\alpha)$ is a subset of $\omega$ with $\alpha<((\aleph_1^V)^+)^{L[g(\alpha)]}$. 
%
Since $g(\alpha)^\sharp$ exists, there is a term $\tau^{g(\alpha)^\sharp}_\alpha(\vec{x},y)$  such that $\alpha=\tau_\alpha^{g(\alpha)^\sharp}(\vec{\beta},\omega^V_1)$, where $\vec{\beta}<\omega_1^V$.
Note that now $j(\alpha)=\tau_\alpha^{g(\alpha)^\sharp}(\vec{\beta},\delta)$. It follows that $j\restriction\omega_2$ is in $V$.
%
\end{proof}

We now prove the main result of this section:

\begin{theorem}\label{dplau}\label{main1}
If there is a proper class of Woodin cardinals, then $\CAA$ is club determined.
\end{theorem}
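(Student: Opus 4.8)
The plan is to argue by contradiction, isolating a \emph{minimal} failure of club determinacy and transporting it through a generic ultrapower given by the stationary tower, where it collapses into a fixed object of $V$ and can therefore no longer depend on the generic. Since there is a proper class of Woodin cardinals, it suffices to show each level $(L'_\alpha,\in)$ is club determined, and for any fixed $\alpha$ we may choose a Woodin cardinal $\delta$ with $L'_\alpha$ and all relevant parameters in $V_\delta$. So suppose that club determinacy fails, and pick $\alpha$ and a formula $\phi(\bx,s,\bt)$ of $\LL(\aaq)$ minimal such that
\[
S=\{s\in\P_{\omega_1}(L'_\alpha) : L'_\alpha\models\phi(\ba,s,\bt)\}
\]
is stationary and co-stationary in $\P_{\omega_1}(L'_\alpha)$ for some $\ba$ and some finite sequence $\bt$ of countable subsets of $L'_\alpha$. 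A separate collapsing argument, preserving the minimality of the counterexample, lets me assume $|\alpha|=\aleph_1$ and $\delta^1_2=\omega_2$; the latter is arrangeable using a Woodin cardinal and is exactly the hypothesis feeding Lemmas~\ref{ww} and~\ref{omega2}.

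Next I would force with the countable stationary tower $Q_{<\delta}$ and let $j:V\to M\subseteq V[G]$ be the generic ultrapower embedding, so that the critical point is $\omega_1$, $j(\omega_1)=\delta$, and $M$ is closed under $\omega$-sequences in $V[G]$. Writing $\beta=j(\alpha)$, the set $j(S)$ is computed in $M$ as the $\aaq$-definition of $\phi(j(\ba),s,j(\bt))$ over $(L'_\beta)^M$. Because $\omega_1^M=\delta$, the quantifier $\aaq$ of $M$ applied to structures of $M$ is really the quantifier $\aaqd$ read in $V$; combined with preservation of stationarity from $V$ to $V[G]$ (Propositions~\ref{svsvg} and~\ref{late}) this should identify, level by level, the hierarchy $(L'_\beta)^M$ of $C(\aaq)$ in $M$ with the hierarchy $L^*_\beta$ of $C(\aaqd)$ in $V$, i.e.\ the identification $C(\aaq)^M=C(\aaqd)^V$ (Proposition~\ref{thesame}). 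By Lemma~\ref{omega2}, since $\delta^1_2=\omega_2$ and $|\alpha|=\aleph_1$, the parameters transport into $V$: there are $a^*,t^*\in V$ with $j(\ba)=a^*$, $j(\bt)=t^*$, decided already by the maximal condition. Consequently
\[
j(S)=^*\{s\in\P_\delta(L^*_\beta) : L^*_\beta\models_\delta\phi(a^*,s,t^*)\},
\]
an object lying in $V$ and not depending on $G$.

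Finally I would extract the contradiction. Both $S$ and its complement are stationary, hence both are conditions of $Q_{<\delta}$; choosing $G$ to contain $S$ puts the generic point $j[L'_\alpha]$ into $j(S)$, while choosing $G$ to contain the complement keeps it out. But the previous paragraph shows that, modulo the nonstationary ideal, $j(S)$ equals a fixed set $S^*\in V$ independent of $G$, so whether the generic point lies in $j(S)$ is decided uniformly; the two choices of generic then force opposite answers, which is absurd.

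I expect the main obstacle to be the level-by-level identification $C(\aaq)^M=C(\aaqd)^V$ of the second paragraph: here one must show that the $\aaq$-readings in $M$ coincide with the $\aaqd$-readings in $V$ at every stage up to $\beta$, which rests both on the stationary-tower preservation results (Propositions~\ref{svsvg} and~\ref{late}) and on the minimality of the counterexample, the latter guaranteeing that no earlier definable stationary co-stationary set can make the two hierarchies diverge. The transport of parameters via Lemma~\ref{omega2} and the correct bookkeeping of which point counts as "generic" are the secondary technical points.
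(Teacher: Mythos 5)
Your proposal is correct and follows essentially the same route as the paper's proof: a minimal counterexample, reduction to $|\alpha|=\aleph_1$ and $\delta^1_2=\omega_2$, the level-by-level identification of $C(\aaq)^M$ with $C(\aaq_{\delta})^V$ (the paper's Lemma~\ref{tomorrow}), transport of the parameters into $V$ via Lemma~\ref{omega2}, and the two-generic contradiction. The only cosmetic difference is that the paper transfers $S$ to a stationary co-stationary subset of $\omega_1$ via an $L[x]$-definable bijection so that $j(S)$ is literally a fixed set $T\in V$ (not merely determined modulo nonstationary sets), which makes the final dichotomy ``$\omega_1\in T$ or not'' exact rather than up to a club.
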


\begin{proof}Suppose $\alpha$ is the smallest ordinal for which $(J'_\alpha,\in,{\tr}\rest\alpha)$ fails to satisfy Club Determinacy. We can collapse $|\alpha|$ to $\aleph_1$ without changing $\CAA$ (Proposition~\ref{sigmaclosed}). Hence we may assume w.l.o.g. $|\alpha|=\aleph_1$. 
By a result of Shelah we can, starting from a Woodin cardinal, force  the $\omega_2$-saturation of the non-stationary ideal on $\omega_1$ with semi-proper  forcing. Since $|\alpha|=\aleph_1$, this forcing does not change $\CAA$ up to the level $\alpha+1$. 
Since we have also a measurable cardinal, we may conclude that $\delta^1_2=\omega_2$ (\cite[Theorem 3.17]{MR2723878}). Hence we may assume, w.l.o.g. $\delta^1_2=\omega_2$. 
By Lemma~\ref{ww} there is a real $x$ and $f\in L[x]$ such that $f:\alpha\to \omega_1^V$ is a bijection. Suppose $f$ is the least in the canonical well-order of $L[x]$.
Let  $\delta>\alpha$ be a Woodin cardinal and $j:V\to M\subseteq V[G]$ as above. Let $\gamma=j(\alpha)$. Now $g=j(f):\gamma\to\delta$ is a bijection and the $L[x]$-least such.
Clearly, $g\in V$.

Suppose $\phi$ witnesses the failure of Club Determinacy of $J'_\alpha$, i.e. there is a  stationary co-stationary set
\begin{equation}\label{P}
P=\{s\in \Pw_{\omega_1}(J'_\alpha):(J'_\alpha,\in,{\tr}\rest\alpha)\models\phi(\ba,s,t)\},
\end{equation}
where $\ba\in J'_\alpha$ and $t$ is a countable subset of $J'_\alpha$. We may assume that $\phi$ is minimal for such   an $\ba\in J'_\alpha$ and $t$, a countable subset of $J'_\alpha$, to exist. 
%
%
%
%
 By the elementarity of $j$, 
$$Q=_{\mbox{\tiny def}}j(P)$$ is a counter-example to Club Determinacy of $(J''_\gamma,\in,{{\tr}}''\rest\gamma)$ in $M$ in the sense that  
\begin{equation}\label{Q}
Q=\{s\in (\Pw_{\delta}(J''_\gamma))^M:((J''_\gamma,\in,{{\tr}}''\rest\gamma)\models\phi(j(\ba),s,j(t)))^M\},
\end{equation}
where  $j(t)\in (\Pw_{\delta}(J''_\gamma))^M$, is stationary co-stationary in $M$.  Moreover, 
$\gamma$ is minimal such that Club Determinacy  fails in $(J''_\gamma,\in,{{\tr}}''\rest\gamma)$ in $M$, and $\phi$ is minimal such that Club Determinacy fails for $\phi$ in $(J''_\gamma,\in,{{\tr}}''\rest\gamma)$ in $M$ with some parameters $\ba$ and $t$.  In other words, 
\begin{equation}\label{minimal}
\left\{\begin{array}{ll}
(a)&\mbox{If $\eta\in\gamma\cap\Lim$, then $(J''_\eta,\in,{{\tr}}''\rest\eta)$ satisfies}\\
&\qquad \mbox{ Club Determinacy in $M$,}\\
(b)&\mbox{If $\psi$ is a subformula of $\phi$, then
$(J''_\gamma,\in,{{\tr}}''\rest\gamma)$ satisfies}\\
&\qquad \mbox{ Club Determinacy for $\psi$ in $M$,}\\
\end{array}\right.
\end{equation}

We need an auxiliary concept relevant only for this proof: 
Let us say that $(J^*_\xi,\in,{{\tr}}^*\rest\xi)$ satisfies \emph{weak $\delta$-Club Determinacy (for $\psi$)} if it satisfies $\delta$-Club Determinacy (for $\psi$) with the restriction that the parameters ($\bt$ in Definition~\ref{cd}) are  in $V\cap M$.



\begin{lemma}\label{tomorrow}
For all $\eta\in\gamma\cap\Lim$, $\bj''_\eta=\bj^*_\eta$ and $(J^*_\eta,\in,{{\tr}}^*\rest\eta)$ satisfies  weak $\delta$-Club Determinacy (in $V$).
\end{lemma}

\begin{proof}
We prove the claim by induction on limit ordinals $\eta$. 

\subsubsection*{Successor case}

Let us assume the Lemma for $\eta\in\gamma\cap\Lim$ and prove it for $\eta+\omega<\gamma$. 
By definition,
$$\bj^*_{\eta+\omega}=(J^*_{\eta+\omega},\in,{{\tr}}^*\rest {\eta+\omega}),$$
where $$J^*_{\eta+\omega}=\rud_{\tr^*}(J^*_\eta\cup\{J^*_\eta\}).$$ 
By Induction Hypothesis, 

$$\begin{array}{lcl}
{{\tr}}^*\rest\eta+\omega
&=&\bigcup_{\delta\in\eta\cap\Lim}(\{\delta\}\times{{\tr}}^*_\delta)\\
&=&\bigcup_{\delta\in\eta\cap\Lim}(\{\delta\}\times{{\tr}}''_\delta)\\
&=&{{\tr}}''\rest\eta+\omega.\end{array}$$
and therefore
$$\begin{array}{lcl}
J^*_{\eta+\omega}&=&\rud_{\Tr^*}(J^*_\eta\cup\{J^*_\eta\})\\
&=&\rud_{\Tr''}(J''_\eta\cup\{J''_\eta\})\\
&=&J''_{\eta+\omega}.
\end{array}$$ 

Next we prove ${{\tr}}^*_{\eta+\omega}={{\tr}}''_{\eta+\omega}$.
To this end, let $N=J^*_{\eta+\omega} =J''_{\eta+\omega}$ and $R={{\tr}}^*\rest\eta+\omega={{\tr}}''\rest\eta+\omega$. We prove that the following  equivalence holds,\footnote{Note that $M^{<\delta}\subseteq M$ in $V[G]$, whence $(\Pw_{\delta}(N))^V\subseteq (\Pw_{\delta}(N))^M$.} whenever $\psi(\ba,\bt)$ is an $\LL(\aaq)$-formula:

\begin{equation}\label{equiv}
\left\{\begin{array}{ll}
(a)&\mbox{If $\ba\in N (\subseteq M)$ and $\bt$ in  $(\Pw_{\delta}(N))^V\cap M$, then}\\
&((N,\in,R)\models_\delta\psi(\ba,\bt))^V \iff ((N,\in,R)\models\psi(\ba,\bt))^M.\\
(b)&\mbox{$(N,\in,R)$ has  weak $\delta$-Club Determinacy for $\psi(\ba,\bt)$ (in $V$).}\\
\end{array}\right.
\end{equation}

The claim  ${{\tr}}^*_{\eta+\omega}={{\tr}}''_{\eta+\omega}$ follows from (\ref{equiv}) by forgetting the parameter $\bt$, which, however, is important for the success of the inductive proof of (\ref{equiv}). By the nature of this inductive proof, it suffices to consider the restricted case $\bt\in(\Pw_{\delta}(N))^V\cap M$. Of course, the right hand side of the equivalence in (\ref{equiv}a) makes sense only if $\bt\in M$. Respectively the left hand side requires $\bt\in V$. Therefore it is reasonable to assume in
(\ref{equiv}a) that $\bt\in V\cap M$. This is also the assumption in weak $\delta$-Club Determinacy.

We prove  the conditions (\ref{equiv}) by induction on $\psi$. It suffices to prove the induction step for the $\aaq$-quantifier. Thus we assume  (\ref{equiv}) for $\psi=\theta(\ba,s,\bt)$ and prove (\ref{equiv}) for $\psi(\ba,\bt)=\aaqs\theta(\ba,s,\bt)$.

To prove ``$\Rightarrow$" in (\ref{equiv}a)  for $\psi(\ba,\bt)=\aaqs\theta(\ba,s,\bt)$, {suppose} $$((N,\in,R)\models_\delta\aaqs\theta(\ba,s,\bt))^V.$$ Let $K\in V$, $K\subseteq\Pw_\delta(N)^V$, be a club of $s$ such that 
\begin{equation}\label{tahti}
((N,\in,R)\models_\delta\theta(\ba,s,\bt))^V.
\end{equation} By Lemma~\ref{generalnonsense} $K$ is stationary in $V[G]$.
If $((N,\in,R)\not\models\theta(\ba,s,\bt))^M$, then by (\ref{minimal}a)  there is a club $H$ of $s\in\Pw_{\omega_1}(N)^M$ such that $((N,\in,R)\not\models\theta(\ba,s,\bt))^M.$ Since ${}^\omega M\subseteq M$, this club $H$ is also a club in $V[G]$. Let $s\in K\cap H$. Note that $s\in M$. By Induction Hypothesis, $((N,\in,R)\not\models\theta(\ba,s,\bt))^V,$ contrary to (\ref{tahti}). Thus $((N,\in,R)\models\theta(\ba,s,\bt))^M$.

To prove ``$\Leftarrow$", {suppose} $$((N,\in,R)\models\aaqs\theta(\ba,s,\bt))^M.$$ Let $K\in M$, $K\subseteq\Pw_\delta(N)^M$, be a club of $s$ such that 
\begin{equation}\label{tahtia}
((N,\in,R)\models\theta(\ba,s,\bt))^M.
\end{equation}Since ${}^\omega M\subseteq M$, this club $K$ is also a club in $V[G]$.
If $((N,\in,R)\not\models_\delta\aaqs\theta(\ba,s,\bt))^V$, then by the weak $\delta$-Club Determinacy 
for $\theta(\ba,s,\bt)$ of $(N,\in,R)$ in $V$ there is a club $H$ of $s\in\Pw_{\delta}(N)^V$ such that $((N,\in,R)\not\models_\delta\theta(\ba,s,\bt))^V.$   By Lemma~\ref{generalnonsense} $H$ is stationary in $V[G]$. Let $s\in K\cap H$. Note that $s\in M$. By Induction Hypothesis, $((N,\in,R)\not\models\theta(\ba,s,\bt))^M,$ contrary to (\ref{tahtia}).

We move to proving (\ref{equiv}b) for $\psi(\ba,\bt)=\aaqs\theta(\ba,s,\bt)$. Let $\bt=(u,t_1,\ldots,t_n)$ and $\vec{t'}=(t_1,\ldots,t_n)$. We need to prove
\begin{equation}\label{wcd}
N\models_\delta \aaqu\psi(\ba,u,\vec{t'})\vee  \aaqu\neg\psi(\ba,u,\vec{t'}),
\end{equation} 
where $\ba\in N$ and $\vec{t'}\in (\mathcal{P}_\delta(N))^V\cap M$.

By (\ref{minimal}a), 
$$(N\models \aaqu\psi(\ba,u,\vec{t'})\vee  \aaqu\neg\psi(\ba,u,\vec{t'}))^M.$$
For example, there is a club $K$ in $M$ of countable subsets $u$ of $N$ such that
$(N\models\psi(\ba,u,\vec{t'}))^M$. The set $K$ is still club in $V[G]$. We can now argue that $N\models_\delta\aaqu\psi(\ba,u,\vec{t'})$, for otherwise there is a stationary set $U$ of elements of $(\mathcal{P}_\delta(N))^V$ such that $N\models\neg\psi(\ba,u,\vec{t'})$. By Theorem~\ref{late} the set $U$ is stationary in $V[G]$. Intersecting $K$ and $U$ leads to a contradiction with (\ref{equiv}a). The argument is essentially the same if  there is a club $K$ in $M$ of countable subsets $u$ of $N$ such that
$(N\models\neg\psi(\ba,u,\vec{t'}))^M$. We have proved (\ref{wcd}).

This ends the proof on (\ref{equiv}) and ends the successor case. 

\subsubsection*{Limit case}

Let us assume $\nu<\gamma$ is a limit of limit ordinals and the claim of the Lemma holds for $\eta\in\nu\cap\Lim$. Now  we show that it  holds for $\nu$, too.

By Induction Hypothesis,
$$\begin{array}{lclclcl}
J^*_{\nu}&=&\bigcup_{\eta\in\nu\cap\Lim}J^*_\eta&=&\bigcup_{\eta\in\nu\cap\Lim}J''_\eta&=&J''_\nu.\\ 
{{\tr}}^*\rest\nu
&=&\bigcup_{\eta\in\nu\cap\Lim}{{\tr}}^*\rest\eta&=&\bigcup_{\eta\in\nu\cap\Lim}{{\tr}}''\rest\eta&=&{{\tr}}''\rest\nu.
\end{array}$$
Next we note that  ${{\tr}}^*_{\nu}={{\tr}}''_{\nu}$ can be proved with exactly the same  argument as above for 
${{\tr}}^*_{\eta+\omega}={{\tr}}''_{\eta+\omega}$.
This ends the proof for the limit case. %
\end{proof}

\begin{lemma}\label{tomorroww}
$J''_\gamma=J^*_\gamma$, ${{\tr}}''\rest\gamma={{\tr}}^*\rest\gamma$, and letting $N=J''_\gamma$ and $R={{\tr}}''\rest\gamma$, the equivalence (\ref{equiv}a)  holds for  $\phi$\footnote{$\phi$ is the minimal counter-example chosen in the beginning of the proof.} in place of $\psi$.
\end{lemma}

\begin{proof}Clearly, by Lemma~\ref{tomorrow}, $J''_\gamma=J^*_\gamma$ and ${{\tr}}''\rest\gamma={{\tr}}^*\rest\gamma$.
We now prove (\ref{equiv}a) with $\gamma$ in place of $\xi$ by induction on  subformulas $\psi$ of $\phi$. It suffices to prove the induction step for the $\aaq$-quantifier. Thus we assume  (\ref{equiv}a) for $\psi=\theta(\ba,s,\bt)$ and prove (\ref{equiv}a) for $\psi=\aaqs\theta(\ba,s,\bt)$.

To prove ``$\Rightarrow$", {suppose} $((N,\in,R)\models_\delta\aaqs\theta(\ba,s,\bt))^V.$ Let $K\in V$, $K\subseteq\Pw_\delta(N)^V$, be a club of $s$ such that 
\begin{equation}\label{tahtib}
((N,\in,R)\models_\delta\theta(\ba,s,\bt))^V.
\end{equation} By Lemma~\ref{generalnonsense} $K$ is stationary in $V[G]$.
If $((N,\in,R)\not\models\theta(\ba,s,\bt))^M$, then by (\ref{minimal}b) there is a club $H$ of $s\in\Pw_{\omega_1}(N)^M$ such that $((N,\in,R)\not\models\theta(\ba,s,\bt))^M.$ Since ${}^\omega M\subseteq M$, this club $H$ is also a club in $V[G]$. Let $s\in K\cap H$. By Induction Hypothesis, $((N,\in,R)\not\models\theta(\ba,s,\bt))^V,$ contrary to (\ref{tahtib}).

To prove ``$\Leftarrow$", {suppose} $((N,\in,R)\models\aaqs\theta(\ba,s,\bt))^M.$ Let $K\in M$, $K\subseteq\Pw_\delta(N)^M$, be a club of $s$ such that 
\begin{equation}\label{tahtic}
((N,\in,R)\models\theta(\ba,s,\bt))^M.
\end{equation}Since ${}^\omega M\subseteq M$, this club $K$ is also a club in $V[G]$.
If $((N,\in,R)\not\models_\delta\aaqs\theta(\ba,s,\bt))^V$, then by the weak $\delta$-Club Determinacy for $\theta$ of $(N,\in,R)$ in $V$, which is part of our Induction Hypothesis, there is a club $H$ of $s\in\Pw_{\delta}(N)^V$ such that $((N,\in,R)\not\models_\delta\theta(\ba,s,\bt))^V.$   By Lemma~\ref{generalnonsense} $H$ is stationary in $V[G]$. Let $s\in K\cap H$. By Induction Hypothesis, $((N,\in,R)\not\models\theta(\ba,s,\bt))^M,$ contrary to (\ref{tahtic}).

This ends the proof on (\ref{equiv}).
\end{proof}

Recall the definition of $P$ in (\ref{P}) and of $Q$ in (\ref{Q}). By Lemma~\ref{omega2} there are $\vec{a^*}\in V$ and ${t^*}\in V$ such that $j(\vec{a})=\vec{a^*}$ and $j(t)={t^*}$. Moreover, $\vec{a^*}$ and ${t^*}$ are independent of $G$. 

  The mapping   $f$ was defined as a bijection of $\alpha$ onto $\omega_1^V$. There is a  bijection of $J'_\alpha$ onto $\alpha$, definable over $J'_\alpha$. By combining the two bijections, we get a bijection $\tilde{f}:J'_\alpha\rightarrow \omega_1^V$. Similarly we get in   bijection of $J''_\gamma$ onto $\gamma$, definable over $j(J'_\alpha)=J''_\gamma=J^\ast_\gamma$. Since $J''_\gamma\in V$ this bijection is in $V$ and by combining it with $g$ we get a bijection $\tilde{g}:J''_\gamma\rightarrow \delta$. Since  $j(f)=g\in V$, then also $\tilde{g}=j(\tilde{f})$, it is  in $V$, and it is independent of the generic $G$.

By Lemma~\ref{tomorroww} and (\ref{equiv})
\begin{equation}\label{tomorrowww}
Q\cap V=\{s\in \Pw_{\delta}(J^*_\gamma):(J^*_\gamma,\in,{{\tr}}^*\rest\gamma)\models\phi(\vec{a^*}\,s,\bar{t^*})\}
\end{equation}
and therefore $Q\cap V\in V$, and the identity (\ref{tomorrowww}) holds independently of $G$. 
Let $$S=\{\beta<\omega_1:\tilde{f}^{-1}[\beta]= s\mbox{ for some }s\in P\}.$$
It is easy to see that $S$ is stationary co-stationary on $\omega_1$. Note that the set
$$j(S)=\{\beta<\delta:\tilde{g}^{-1}[\beta]= s\mbox{ for some }s\in j(P) \},$$ 
$$=\{\beta<\delta:g^{-1}[\beta]= s\mbox{ for some }s\in j(P)\cap V (=Q\cap V)\},$$  is in $V$, and is independent of $G$.
Let $G_1$ be $Q_{<\delta}$-generic such that $S\in G_1$ and let $j_1:V\to M_1$ be the associated embedding.
Let $G_2$ be $Q_{<\delta}$-generic such that $\omega_1\setminus S\in G_2$  and let $j_2:V\to M_2$ be the associated embedding.
Now $\omega_1\in j_1(S)$ and $\omega_1\notin j_2(S)$. But, $j_1(S)=j_2(S)$, a contradiction.\end{proof}

\begin{corollary}
Suppose there is a supercompact cardinal. Then Club Determinacy holds.
\end{corollary}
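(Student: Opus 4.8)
The plan is to reduce to Theorem~\ref{main1} by a reflection argument, exploiting the fact that the proof of that theorem really consumes only a \emph{single} Woodin cardinal lying above the least level at which club determinacy fails, together with a measurable and Shelah's saturation forcing --- all of which are available below a supercompact. Let $\kappa$ be supercompact. The first observation is that the assertion ``club determinacy fails'' is $\Sigma_2$ in the L\'evy hierarchy: it reads $\exists\alpha\,\exists(\phi,\ba,t)$ such that the set $S=\{s\in\P_{\omega_1}(L'_\alpha):L'_\alpha\models\phi(\ba,s,t)\}$ is stationary and co-stationary, and each of ``$S$ is stationary'' and ``the complement of $S$ is stationary'' is $\Pi_1$ (a universal quantifier over algebras on $L'_\alpha$ followed by bounded quantifiers).

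Second, I would check that for $\alpha<\kappa$ the construction of $C(\aaq)$ through level $\alpha$, and the truth of club determinacy at level $\alpha$, are computed identically in $V_\kappa$ and in $V$. Since $\kappa$ is inaccessible, for any $X\in V_\kappa$ the collection $\P_{\omega_1}(X)$ and all algebras on $X$ lie in $V_\kappa$ and have size $<\kappa$; hence stationarity of a definable subset of $\P_{\omega_1}(X)$ --- the only channel through which $V$ enters the $\aaq$-steps --- is absolute between $V_\kappa$ and $V$. By induction on $\alpha<\kappa$ this yields that $L'_\alpha$ computed in $V_\kappa$ equals $L'_\alpha$ computed in $V$, with agreement on club determinacy at each such level.

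Third, since $\kappa$ is supercompact we have $V_\kappa\prec_{\Sigma_2}V$, so if club determinacy failed in $V$ it would fail in $V_\kappa$, and by the second step the witnessing level may be taken to be some $\alpha<\kappa$ that genuinely witnesses failure in $V$. In particular the least failure level $\alpha_0$ satisfies $\alpha_0<\kappa$. Now a supercompact cardinal is a limit of Woodin cardinals, so there is a Woodin $\delta$ with $\alpha_0<\delta<\kappa$, and measurables are cofinal in $\kappa$; all the auxiliary forcings in the proof of Theorem~\ref{main1} (the collapse of $|\alpha_0|$ to $\aleph_1$, Shelah's forcing to saturate the nonstationary ideal on $\omega_1$, and arranging $\delta^1_2=\omega_2$) can be carried out below $\kappa$, preserving the relevant large cardinals by L\'evy--Solovay. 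Re-running that proof with $\alpha_0$ as the least failure level and $\delta$ as the Woodin above it produces the same contradiction, so no failure exists and club determinacy holds.

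The main obstacle will be the absoluteness bookkeeping of the second step: one must verify carefully that the $\aaq$-quantifier, which by design ``reaches out'' to the surrounding universe, produces the same constructed sets whether evaluated in $V_\kappa$ or in $V$, and that this survives the inductive passage through limit levels below $\kappa$. Once that local absoluteness is secured, the $\Sigma_2$-reflection together with the cofinality of Woodins below $\kappa$ makes the reduction to Theorem~\ref{main1} routine.
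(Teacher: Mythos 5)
Your proposal is correct and follows essentially the same route as the paper: reflect the least failure of club determinacy below $\kappa$ using $V_\kappa\prec_{\Sigma_2}V$, then re-run the argument of Theorem~\ref{main1} with a Woodin cardinal above that level (the paper simply uses $\kappa$ itself, which is Woodin, while you take one of the cofinally many Woodins below $\kappa$ --- an immaterial difference). The absoluteness bookkeeping and L\'evy--Solovay remarks you add are correct elaborations of what the paper leaves implicit.
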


\begin{proof} Suppose $\kappa$ is supercompact.
 Let $\a$ be the least such that $(J'_\a,\in,{\tr}\rest \alpha)$ is not club determined. Since $V_\kappa\prec_2 V$, we can assume $\a<\kappa$. Since $\kappa$ is a limit of Woodin cardinals, we can proceed as above. 
\end{proof}




\begin{proposition}[\cite{MR2499432}]\label{pfa}
Assuming $\PFA$, there is, for every set $X$, an inner model with a proper class of Woodin cardinals, containing $X$.
\end{proposition}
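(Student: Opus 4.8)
The plan is to argue by contradiction using the \emph{core model induction} relativized to the parameter $X$. Fix the set $X$ and, coding the transitive closure of $X$ by a set of ordinals inside a collapse, assume without loss of generality that $X$ is a set of ordinals $X\subseteq\eta$. Suppose toward a contradiction that there is no inner model containing $X$ with a proper class of Woodin cardinals. The strategy is to show that this anti--large--cardinal hypothesis keeps the relativized core model $K(X)$ available at every level of the construction, to run a transfinite recursion building canonical mice over $X$ carrying more and more Woodin cardinals, and then to show that the recursion cannot terminate, because any premature halt would force a square principle to hold in $V$, contradicting $\PFA$.

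First I would set up the relativized fine-structure theory over $X$. Under the hypothesis that no inner model over $X$ has a proper class of Woodins, one has, below the least would-be such level, the background-certified construction $K^c(X)$ and from it the true core model $K(X)$: it is fully iterable, satisfies condensation and weak covering, and---this is the key point---satisfies $\square_\kappa$ for every uncountable cardinal $\kappa$ of $K(X)$. By weak covering, $K(X)$ computes successors of singular (and sufficiently many regular) cardinals of $V$ correctly, so these square sequences transfer to genuine square sequences on cardinals of $V$.

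Next I would run the core model induction proper. This is a transfinite recursion that successively certifies the existence and full iterability of the mice $M_n^{\#}(X)$, then climbs through the inductively defined ``projective-like'' hierarchy over $X$, establishing at each stage $\alpha$ a determinacy/mouse-existence hypothesis $(*)_\alpha$ whose truth supplies exactly the mouse operators needed to take the next step. The inductive hypothesis drives the scale-construction and the comparison arguments that build mice one level higher, so the Woodin cardinals in the resulting models accumulate without bound; taking the limit of the resulting $L[E]$-models over $X$ produces an inner model containing $X$ with a proper class of Woodin cardinals.

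The engine of the contradiction, and the reason the recursion cannot stall, is the clash between $K(X)$ and $\PFA$: $\PFA$ refutes $\square_\kappa$ for every uncountable $\kappa$ (Todorcevic; equivalently via reflection consequences of $\PFA$ such as the Mapping Reflection Principle), whereas a halted induction would hand us a fully iterable $K(X)$ whose correctly computed successors carry square sequences. Hence at no stage can the construction fail, and the induction climbs through all levels. The main obstacle, and the genuine technical heart, is the core model induction itself: proving full iterability of the certified mice over $X$ (the iterability problem, handled by $K^c$-style background certificates and realizability arguments), ensuring that the inductively defined scales and mouse operators cohere into honest inner models, and \emph{capturing} the relevant $\PFA$-consequence uniformly at each level so that it breaks the square principle supplied by $K(X)$.
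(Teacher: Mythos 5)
Your proposal is essentially the paper's own argument: the paper simply invokes Theorem 0.1 of Jensen--Schimmerling--Schindler--Steel (\emph{Stacking mice}), relativized to $X$-mice whose extenders all have critical point above a $\delta$ with $X\subseteq\delta$, and feeds it the failure of $\Box(\kappa)$ and $\Box_{\kappa^+}$ at arbitrarily large $\kappa$ supplied by $\PFA$ via Todor\v{c}evi\'c. Your sketch of the relativized core model induction driven by $\PFA$'s refutation of square principles is precisely the content of that cited theorem, so the two routes coincide.
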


\begin{proof}We modify Theorem 0.1 of \cite{MR2499432} as follows. Suppose $X$ is an arbitrary set of ordinals, e.g. $X\subseteq\delta$. Let an $X$-\emph{mouse} be a mouse as in \cite{MR2499432} except that the mouse is assumed to contain $X$ and, moreover, it is  required that all the extenders on the coherent sequence have the critical point above $\delta$.
With this modification the proof of Theorem 0.1 in \cite{MR2499432} gives the result that if $\Box(\kappa)$ and $\Box_{\kappa^+}$ fail for  some $\kappa>\delta$, a consequence of $\PFA$, then there is an inner model with a proper class of Woodin cardinals containing  $X$.\end{proof}

\begin{theorem}\label{main2}
Assuming $\PFA$,  Club Determinacy holds.
\end{theorem}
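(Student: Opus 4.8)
The plan is to argue by contradiction, localize the failure to a level of size at most $\aleph_1$, and then transport that failure into an inner model carrying a proper class of Woodin cardinals, where Theorem~\ref{main1} rules it out.

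First I would assume $\mmpp$ and suppose that $C(\aaq)$ is not club determined. Let $\alpha$ be the least ordinal at which $L'_\alpha$ fails club determinacy. By Lemma~\ref{some} we have $|\alpha|\le\aleph_1^V$, so both $L'_\alpha$ and a witnessing counterexample live in $H(\aleph_2)$. Fix a formula $\phi$, a parameter $\ba\in L'_\alpha$ and a finite sequence $\bt$ of countable subsets of $L'_\alpha$ so that $P=\{s\in\P_{\omega_1}(L'_\alpha):L'_\alpha\models\phi(\ba,s,\bt)\}$ is stationary and co-stationary. Composing with a bijection $L'_\alpha\to\omega_1$ (which exists since $|L'_\alpha|\le\aleph_1$) I extract a stationary co-stationary set $S\subseteq\omega_1$ together with the finite parameters needed to rebuild the counterexample.

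Next, since $\mmpp$ implies $\PFA$, I would invoke Proposition~\ref{pfa} on a set $X$ coding $L'_{\alpha+1}$ and $S$ to obtain an inner model $W$ with a proper class of Woodin cardinals and $X\in W$, arranged so that $\omega_1^W=\omega_1^V$. The goal is to see the very same counterexample inside $W$: that the hierarchy $(L'_\beta)^W$ agrees with $(L'_\beta)^V$ for all $\beta\le\alpha$, and that $S$ is still stationary and co-stationary in $W$. The stationarity part is the easy half, because $W\subseteq V$ with the same $\omega_1$ forces every $V$-stationary subset of $\omega_1$ to remain stationary in $W$; applying this to $S$ and to $\omega_1\setminus S$ simultaneously keeps $S$ stationary and co-stationary in $W$. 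Granting the agreement of the hierarchies up to $\alpha$, the set $P$ is then a definable witness to the failure of club determinacy at level $\alpha$ of $C(\aaq)^W$, contradicting Theorem~\ref{main1} applied inside $W$.

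The hard part, and the step on which the whole argument turns, is the level-by-level absoluteness of the construction between $V$ and $W$: since the quantifier $\aaq$ reaches out to the club filter on $\P_{\omega_1}$, and $W$ has fewer countable sets and fewer clubs than $V$, it is not automatic that a definable subset of $\P_{\omega_1}(L'_\beta)$ contains a club in $V$ exactly when it does in $W$. I would try to push this through by induction on $\beta\le\alpha$, exploiting two features. First, the minimality of $\alpha$ gives genuine club determinacy below $\alpha$, so at each earlier stage every definable set falls cleanly onto the club side or the co-club side, a dichotomy far more stable under shrinking the model than bare stationarity. Second, I would lean on the strength of $\mmpp$ (saturation of $\NS_{\omega_1}$ and the attendant correctness of the nonstationary ideal for inner models carrying Woodin cardinals) to identify the club filters of $V$ and $W$ on the relevant $\aleph_1$-sized structures. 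Making this identification precise is where the real work lies; everything else is bookkeeping around Lemma~\ref{some}, Proposition~\ref{pfa}, and Theorem~\ref{main1}.
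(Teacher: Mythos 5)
Your overall route is the paper's: minimize $\alpha$, use Lemma~\ref{some} to get $|\alpha|\le\aleph_1$, push the counterexample into an inner model with a proper class of Woodin cardinals via Proposition~\ref{pfa}, and contradict Theorem~\ref{main1}. But the step you yourself flag as ``the hard part'' --- the level-by-level agreement of $(L'_\beta)^W$ with $(L'_\beta)^V$ for $\beta\le\alpha$ and the preservation of the counterexample --- is a genuine gap as you have set things up. If $W$ only contains a set coding $L'_{\alpha+1}$ and $S$, then $W$ may be missing countable subsets of $L'_\alpha$ and missing the algebras that witness clubhood, so a definable subset of $\P_{\omega_1}(L'_\beta)$ can contain a club in $W$ without containing one in $V$ (and a $V$-stationary subset of $\P_{\omega_1}(L'_\beta)$, as opposed to of $\omega_1$, need not be $W$-stationary in any obvious way). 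Your proposed repair via minimality of $\alpha$ plus saturation of $\NS_{\omega_1}$ is not carried out and it is not clear it can be: saturation controls the nonstationary ideal on $\omega_1$, not the club filter on $\P_{\omega_1}(N)$ for an arbitrary $\aleph_1$-sized $N$, and club determinacy below $\alpha$ is a fact about $V$ that does not by itself transfer to $W$.

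The paper closes this gap by a different choice of $X$, which makes the whole absoluteness issue evaporate: take $X=V_{\omega_2}$ (equivalently, anything containing $H(\aleph_2)$). Since $|\alpha|\le\aleph_1$, every level $L'_\beta$ with $\beta\le\alpha+1$, every countable subset of it, every parameter $\bt$, and every function $N^{<\omega}\to N$ witnessing that a set of countable subsets contains a club, all lie in $H(\aleph_2)\subseteq X\subseteq W$. Hence $\P_{\omega_1}(L'_\beta)^W=\P_{\omega_1}(L'_\beta)^V$ and the club filter on it is \emph{literally the same} filter computed in $W$ or in $V$; the evaluation of $\aaq$ at these levels is therefore identical in the two models, the hierarchies agree up to $\alpha+1$ by a trivial induction, and the stationary co-stationary counterexample $P$ is a counterexample in $W$ verbatim. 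No appeal to $\NS_{\omega_1}$-saturation or to minimality is needed for the transfer. With that one change your argument becomes the paper's proof.
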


\begin{proof}
Suppose Club Determinacy fails at $(J'_\alpha,\in,\tr\rest\alpha)$ and $\alpha$ is minimal.    
%
Let $X$ contain everything we need for the failure of Club Determinacy, e.g. $X=V_{\omega_2}$.  By Proposition~\ref{pfa} there is an inner model $M$ with a proper class of Woodin cardinals such that $M$ contains $X$. By the choice of $X$, $M$  fails to satisfy Club Determinacy. But this contradicts Theorem~\ref{main1}.
\end{proof}


\section{Applications of Club Determinacy}

We give three types of applications of Club Determinacy. The first is the immediate consequence that uncountable cardinals are measurable in $\CAA$. Our large cardinal assumption in the proof of Club Determinacy was a proper class of Woodin cardinals, so we are far from an optimal result. Our second application is the forcing absoluteness of the theory of $\CAA$. Here we assume  a proper class of Woodin cardinals and use Club Determinacy merely as a tool in the proof. Our third and more substantial application is a proof of $\CH$ in $\CAA$, using Club Determinacy.   

\subsection{Large cardinals}

Recall that, assuming a proper class of Woodin cardinals, uncountable cardinals are Mahlo in $C^*$, and even weakly compact above $\aleph_1$. In \cite{kmv} we were not able to prove that there are measurable cardinals in $C^*$ under any assumption, even consistently. For the presumably bigger inner model $\CAA$ we establish now the measurability of all uncountable regular cardinals. As it turns out, the proof is an immediate consequence of Club Determinacy.

\begin{theorem}\label{tepois}
Suppose $\CAA$ is club determined. Then every regular $\kappa\ge\aleph_1$ is measurable in $\CAA$.
\end{theorem}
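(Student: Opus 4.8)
Suppose $C(\aaq)$ is club determined. Then every regular $\kappa\ge\aleph_1$ is measurable in $C(\aaq)$.

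The plan is to show that the trace of the $\omega$-club filter of $V$ on $C(\aaq)$ is a normal $\kappa$-complete ultrafilter, using club determinacy to obtain the ultrafilter property. Fix a regular $\kappa\ge\aleph_1$ and recall from the discussion after Definition~\ref{defin} that $S=\{\alpha<\kappa:\cof^V(\alpha)=\omega\}\in C(\aaq)$, and that for $A\subseteq S$ with $A\in C(\aaq)$ the predicate "$A$ contains an $\omega$-club'' is expressible in $\LL(\aaq)$ by a formula of the form $\aaq s(\sup(s)\in A)$ (applied at the appropriate level $L'_\beta$). Accordingly I set
$$
\mathcal{U}=\{A\in\P(\kappa)^{C(\aaq)} : A\supseteq S\cap C\text{ for some }\omega\text{-club }C\subseteq\kappa\text{ in }V\},
$$
i.e.\ the trace $(\F^\omega(S))^V\cap C(\aaq)$. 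First I would verify that $\mathcal U$ is a member of $C(\aaq)$: each $A\in C(\aaq)$ appears as a definable subset at some level $L'_\beta$ with $\kappa\subseteq L'_\beta$, and by the definability of containing-a-club in $\LL(\aaq)$, the relation "$A\in\mathcal U$'' is itself $\LL(\aaq)$-definable over $L'_\beta$, hence $\mathcal U$ is constructed at the next level.

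Next I would check the filter axioms. That $\mathcal U$ is a filter closed under supersets and finite intersections is immediate from the fact that the $\omega$-club filter on $S$ is a genuine filter in $V$ (two $\omega$-clubs intersect in an $\omega$-club), and that $\emptyset\notin\mathcal U$ since $S$ is stationary in $V$ (indeed $\omega$-stationary). For $\kappa$-completeness and normality I would use that $\F^\omega(S)$ is actually $<\!\kappa$-closed and normal in $V$ (it is closed under diagonal intersections of sequences indexed below $\kappa$ of $\omega$-clubs); since the relevant sequences $\langle A_\xi:\xi<\lambda\rangle$ with $\lambda<\kappa$ that live in $C(\aaq)$ are computed inside some $L'_\beta$ and their diagonal intersection is again $\omega$-club in $V$, normality and $\kappa$-completeness transfer to the trace $\mathcal U$, and these closure properties are witnessed inside $C(\aaq)$ by the $\LL(\aaq)$-definability of the $\omega$-club predicate.

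The crucial point—and the one where club determinacy does all the work—is the \emph{ultrafilter} property: for every $A\in\P(\kappa)^{C(\aaq)}$, either $A\in\mathcal U$ or $\kappa\setminus A\in\mathcal U$. Here I would argue as follows. Fix $A$ and a level $L'_\beta\ni A$ with $\kappa\subseteq L'_\beta$. The statement "$S\cap A$ contains an $\omega$-club'' is of the form $\aaq s\,\psi(s,A)$ for a suitable $\LL(\aaq)$-formula (expressing $\sup(s)\in A$ whenever $\sup(s)\in S$), interpreted over $L'_\beta$ with the quantifier $\aaq$ reaching out to countable subsets of $V$. By hypothesis $L'_\beta$ is club determined, so either $\aaq s\,\psi(s,A)$ or $\aaq s\,\neg\psi(s,A)$ holds over $L'_\beta$; the first gives $A\in\mathcal U$ and the second gives $\kappa\setminus A\in\mathcal U$. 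Thus club determinacy is precisely the assertion that there are no definable stationary co-stationary subsets, which is exactly the dichotomy needed to make $\mathcal U$ an ultrafilter. The main obstacle is making rigorous the translation between "$A$ contains an $\omega$-club of ordinals below $\kappa$'' and the satisfaction of a single $\LL(\aaq)$-formula about \emph{countable subsets} $s$ of $L'_\beta$ at the correct level—one must check that $\sup(s)$ ranges correctly over the cofinality-$\omega$ ordinals below $\kappa$ and that the passage from "$\omega$-clubs of ordinals'' to "clubs of countable sets $s$'' is faithful, so that the club-determinacy dichotomy at $L'_\beta$ really delivers the filter/co-filter dichotomy for $A$. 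Once this bookkeeping is done, the three properties together show $\mathcal U$ is a normal $\kappa$-complete ultrafilter in $C(\aaq)$, so $\kappa$ is measurable there.
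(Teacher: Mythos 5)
Your proposal is correct and follows essentially the same route as the paper: the paper also takes $\F_{aa}=\F^\omega(\{\alpha<\kappa:\cof(\alpha)=\omega\})\cap C(\aaq)$, notes it is a normal filter on $\kappa$ in $C(\aaq)$, and obtains the ultrafilter dichotomy by applying club determinacy at a level $L'_\alpha$ where the given $X\subseteq\kappa$ is defined, using the formula $\exists x(x=\sup(s)\wedge x<\kappa\wedge\phi(x,\bb))$ — exactly the translation between clubs of countable sets and $\omega$-clubs of ordinals that you flag as the remaining bookkeeping. The only cosmetic difference is that you treat $A$ as a parameter at a level containing it, whereas the paper uses the defining formula of $X$ over $L'_\alpha$; both are covered by the definition of club determinacy.
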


\begin{proof} For  $\alpha$ big enough for $J'_\alpha$ to contain all subsets of $\kappa$ in $\CAA$, consider the normal filter:
$$\F=\{X\subseteq\kappa : X\in J'_\alpha, (J'_\alpha,\in,\tr\rest\alpha)\models \aaqs(\sup(s\cap\kappa)\in X)\}.$$
%
Suppose $X\subseteq\kappa$ is in $\CAA$.  Since $(J'_\alpha,\in,\tr\rest\alpha)$ is club determined,
$$(J'_\alpha,\in,\tr\rest\alpha)\models\aaqs(\sup(s\cap\kappa)\in X)\mbox{ or }$$ 
$$(J'_\alpha,\in,\tr\rest \alpha)\models\aaqs(\sup(s\cap\kappa)\notin X).$$
In the first case $X\in\F$. In the second case $\kappa\setminus X\in \F$.
\end{proof}

It remains open whether Club Determinacy, or some reasonable stronger assumption, implies that 
there are higher measurable cardinals in $\CAA$. By Corollary~\ref{nowoodin} below, we cannot hope to have Woodin cardinals in $\CAA$ as a consequence of some large cardinal assumptions. It remains open what happens to singular cardinals. Are they regular, or even large cardinals in $\CAA$?

\subsection{Forcing absoluteness}

 The first order theory of $L(\oR)$ is absolute under set forcing, assuming a proper class of  Woodin cardinals. With a stronger assumption the same is true of the Chang model $C(\LL_{\omega_1\omega_1})$. We can prove the absoluteness of $\CAA$ under set forcing assuming a proper class of Woodin cardinals.  

\begin{proposition}\label{thesame}
Suppose club-determinacy holds, $\delta$ is Woodin,  $G\subseteq  Q_{<\delta}$ is generic and $M$ is the associated generic ultrapower. Then $\CAA^M=C(\aaqd)^V$.
 Hence $C(\aaqd)^V$ satisfies club-determinacy.
\end{proposition}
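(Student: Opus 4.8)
Write $(L''_\gamma)$ for the $C(\aaq)$-hierarchy computed in $M$ and $(L^*_\gamma)$ for the $C(\aaqd)$-hierarchy computed in $V$, so that $C(\aaq)^M=\bigcup_\gamma L''_\gamma$ and $C(\aaqd)^V=\bigcup_\gamma L^*_\gamma$. It therefore suffices to prove $L^*_\gamma=L''_\gamma$ for every ordinal $\gamma$, after which the conclusion follows by taking unions. The plan is to re-run, now for all ordinals simultaneously, the level-by-level comparison carried out in Lemma~\ref{tomorrow}, where the same equality was established only up to the level $\beta=j(\alpha)$ coming from a minimal counterexample.

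The only external input needed at each successor step is that the level $L''_\gamma$ is club determined in $M$. In the proof of Theorem~\ref{main1} this was extracted from the minimality of $\alpha$; here I would instead obtain it globally from the hypothesis. The assertion ``every level of $C(\aaq)$ is club determined'' is first order over $V$, since the $\LL(\aaq)$-satisfaction relation on set-sized structures and the club filter on $\P_{\omega_1}$ are $V$-definable; hence by elementarity of $j\colon V\to M$ it transfers to $M$, giving that every $L''_\gamma$ is club determined in $M$ (with $M$'s interpretation of $\aaq$, i.e.\ via $\P_{\omega_1}^M$). This single transfer is what replaces the minimality argument and removes the ceiling $\beta$ from the induction.

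With this in hand I would prove by transfinite induction on $\gamma$ the conjunction ``$L^*_\gamma=L''_\gamma$, and $L^*_\gamma$ satisfies weak $\delta$-club determinacy in $V$,'' the argument being verbatim that of Lemma~\ref{tomorrow}. At a successor $\gamma+1$, writing $N=L^*_\gamma=L''_\gamma\in V\cap M$, one shows by an inner induction on $\psi\in\LL(\aaq)$ that the $V$-definition of subsets of $N$ via $\models_\delta$ agrees with the $M$-definition via $\models$; the only nontrivial case is the $\aaq$-quantifier, handled by flipping quantifiers using club determinacy of $N$ in $M$ together with weak $\delta$-club determinacy of $N$ in $V$, and then producing a common witness $s\in H\cap K$ from a $V$-club $H$ (stationary in $V[G]$ by Lemma~\ref{generalnonsense}) and an $M$-club $K$ (still a club in $V[G]$ since $M^\omega\subseteq M$). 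This yields $L^*_{\gamma+1}=L''_{\gamma+1}$; weak $\delta$-club determinacy of $L^*_{\gamma+1}$ in $V$ then follows as before, using Proposition~\ref{late} to carry a putative stationary co-stationary set up to $V[G]$ and club determinacy of $L''_{\gamma+1}$ in $M$ to derive a contradiction. Limit stages are immediate by taking unions.

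I expect the substantive point, and the only real obstacle, to be precisely the elementarity transfer of club determinacy from $V$ to $M$: once ``$C(\aaq)^M$ is club determined in $M$'' is secured, the minimality hypotheses of Theorem~\ref{main1} become unnecessary and the induction runs without a ceiling, every other ingredient (Lemma~\ref{generalnonsense}, Proposition~\ref{late}, the closure $M^\omega\subseteq M$, and the quantifier-flipping bookkeeping) being exactly the machinery already assembled for Lemma~\ref{tomorrow}. Having established $L^*_\gamma=L''_\gamma$ for all $\gamma$, taking unions gives $C(\aaqd)^V=C(\aaq)^M$, as required.
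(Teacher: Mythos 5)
Your proposal is correct and matches the paper's own argument: the paper likewise transfers club determinacy to $M$ by elementarity of $j$ and then runs the level-by-level, formula-by-formula induction of Lemma~\ref{tomorrow} without a ceiling, using Proposition~\ref{late} for stationarity preservation and $M^\omega\subseteq M$ to keep $M$-clubs club in $V[G]$. The only (cosmetic) difference is that the paper drops the ``weak $\delta$-club determinacy in $V$'' invariant from the induction, since when (A1) fails it suffices to know the complementary set is stationary in $V$ and apply Proposition~\ref{late} directly rather than first extracting a club.
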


\begin{proof}The proof is similar elements to the proof of Theorem~\ref{main1}. Let $j:V\to M$ be the elementary embedding associated with $G$. Note that since we assume Club Determinacy in $V$ for  $J'_\alpha$ for all $\alpha$, we have Club Determinacy in $M$ for $J''_\alpha$ for all $\alpha$.
We show by induction on $\alpha$ and on the $\LL(\aaq)$-formula $\phi(s,\bz,\by)$ that if we denote  $N=(J''_\alpha)^M$ and assume, as part of the Induction Hypothesis,  that $N=(J^*_\alpha)^V$, then for every  $\bb\in N$ and $\bt \in \Pw_\delta(N)\cap V \cap M$ (note that $\omega_1^M=\delta$) the following are equivalent. 
\begin{description}
\item[(A1)] $(N\models_{\delta}\aaqs\phi(s,\bt,\bb))^V$.
\item[(A2)] $(N\models \aaqs\phi(s,\bt,\bb))^M$.
\end{description}

Suppose first  (A1). Let $C$ be a club in $V$ of  sets $s\in \Pw_\delta(N)$ satisfying $(N\models_{\delta}\phi(s,\bt,\bb))^V$. By (\ref{generalnonsense}), $C$ is stationary in $V[G]$. Suppose (A2) fails. Then by Club Determinacy in $M$, there is a club $K$ in $M$ of countable $s$ such that $(N\models \neg\phi(s,\bt,\bb))^M$. Since ${}^{\omega} M\subseteq M$, the set $K$ is still club in $V[G]$. Let $s\in K\cap C$. Note that $s\in M$. By Induction Hypothesis, $(N\models_{\delta}\neg\phi(s,\bt,\bb))^V$,
a contradiction.

For the other direction: suppose (A2) i.e.  in $M$ there is a club $D$ of countable sets $s$ such that  $(N\models\phi(s,\bt,\bb))^M$. This $D$ is still club in $V[G]$. Suppose that (A1) fails. Hence the set $S$ of $s\in \Pw_\delta(N)$ in $V$ that satisfy $(N\models_{\delta}\neg\phi(s,\bt,\bb))^V$ is stationary in $V$. By
 (\ref{late}) it is stationary in $V[G]$. Let $s\in D\cap S$. Now $s\in \Pw_\delta(N)\cap V\cap M$, so we have a contradiction with the Induction Hypothesis.
\end{proof}


\begin{theorem}\label{abs}
Suppose there are a proper class of Woodin cardinals. Then the first order theory of $\CAA$ is (set) forcing absolute. 
\end{theorem}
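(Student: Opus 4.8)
The plan is to leverage Proposition~\ref{thesame} to reduce forcing absoluteness of the theory of $C(\aaq)$ to the forcing absoluteness of the theory of $C(\aaqd)$, and then to exploit the stationary tower at two Woodin cardinals to obtain the desired invariance. First I would observe that, since a proper class of Woodin cardinals is assumed, club determinacy holds (Theorem~\ref{main1}), so Proposition~\ref{thesame} applies: for any Woodin cardinal $\delta$, if $G\subseteq Q_{<\delta}$ is generic with generic ultrapower $j:V\to M$, then $C(\aaq)^M=C(\aaqd)^V$. Since $j$ is elementary, $C(\aaq)^M$ and $C(\aaq)^V$ have the same first order theory; hence the theory of $C(\aaq)^V$ equals the theory of $C(\aaqd)^V$, for every Woodin $\delta$. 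This already shows that the theory of $C(\aaq)$ is computed by $C(\aaqd)$ for cofinally many $\delta$.

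Next I would show this theory is unchanged by an arbitrary set forcing $\oP$. The key point is that, given $\oP$, one chooses a Woodin cardinal $\delta$ so large that $|\oP|<\delta$ and $\delta$ remains Woodin in the extension $V[H]$ by $\oP$ (a proper class of Woodins guarantees cofinally many such $\delta$ above any fixed set forcing, and Woodinness of a sufficiently large $\delta$ is preserved by small forcing). In $V[H]$ we again have club determinacy and may apply Proposition~\ref{thesame} to get $\mathrm{Th}(C(\aaq)^{V[H]})=\mathrm{Th}(C(\aaqd)^{V[H]})$. So the task reduces to proving $\mathrm{Th}(C(\aaqd)^V)=\mathrm{Th}(C(\aaqd)^{V[H]})$ for such a $\delta$. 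Here the plan is to use the standard feature of the stationary tower: forcing with $Q_{<\delta}$ over $V[H]$ produces a generic embedding $j':V[H]\to M'$ with $M'{}^{\omega}\subseteq M'$, and the construction of $C(\aaqd)$ up to any fixed level is absorbed by the embedding in the same level-by-level manner as in the proofs of Lemma~\ref{tomorrow} and Proposition~\ref{thesame}. Because $|\oP|<\delta$, the small forcing $\oP$ does not disturb the $\delta$-club filter used to define $\aaqd$ at the relevant stages, so one reads off that $C(\aaqd)^{V[H]}$ and $C(\aaqd)^V$ share the same theory, both being recovered inside the generic ultrapower.

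Concretely, I would argue as follows. Fix a sentence $\psi$ in the language of set theory and a set forcing $\oP$ with generic $H$. Pick a Woodin $\delta>|\oP|$ that stays Woodin in $V[H]$. Then
\begin{equation}\label{absform}
C(\aaq)^V\models\psi \iff C(\aaqd)^V\models\psi \iff C(\aaqd)^{V[H]}\models\psi \iff C(\aaq)^{V[H]}\models\psi,
\end{equation}
where the outer two equivalences come from Proposition~\ref{thesame} applied in $V$ and in $V[H]$ respectively, and the middle equivalence is the invariance of the theory of $C(\aaqd)$ under the small forcing $\oP$. For the middle step I would force further with $Q_{<\delta}$ over both $V$ and $V[H]$; since $\oP$ is absorbed into the stationary tower generic (as $|\oP|<\delta$ and the tower at a Woodin is homogeneous enough to swallow small forcing), the two generic ultrapowers compute the same $C(\aaqd)$ up to any fixed rank, yielding equality of theories.

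The main obstacle will be justifying the middle equivalence in \eqref{absform}, namely that the small forcing $\oP$ does not change the theory of $C(\aaqd)$. The delicate issue is that the quantifier $\aaqd$ ``reaches out'' to $V$ (or $V[H]$), and a priori adding a small generic could alter which $\delta$-sized sets are club or stationary, thereby changing definable sets at some level $L^*_\alpha$. The resolution should be that, with $\delta$ chosen above $|\oP|$, the $<\!\delta$-club filter is sufficiently robust under $\oP$: by an argument in the spirit of Lemma~\ref{generalnonsense} and Proposition~\ref{late}, clubs and stationary sets of $\P_\delta(N)$ in $V$ remain so after the small forcing, so the level-by-level construction of $C(\aaqd)$ proceeds identically in $V$ and $V[H]$ as seen through the stationary tower embedding. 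I would need to verify carefully that this robustness holds at every stage of the hierarchy and interacts correctly with the elementarity of the tower embedding, since that is precisely where the $\aaqd$-definable sets are pinned down.
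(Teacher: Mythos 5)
Your chain of equivalences $C(\aaq)^V\models\psi \iff C(\aaq_{\delta})^V\models\psi \iff C(\aaq_{\delta})^{V[H]}\models\psi \iff C(\aaq)^{V[H]}\models\psi$ is exactly the paper's proof: the outer two equivalences come from Proposition~\ref{thesame} applied in $V$ and in $V[H]$ respectively, for a Woodin $\delta>|\oP|$ that remains Woodin in $V[H]$, together with the elementarity of the tower embeddings. The only place you diverge is the middle step, which you single out as the main obstacle and propose to handle by forcing with $Q_{<\delta}$ over both $V$ and $V[H]$ and arguing that the tower ``absorbs'' the small forcing $\oP$. The paper does not do this: it simply observes that $(C(\aaq_{\delta}))^{V[H]}=C(\aaq_{\delta})^V$ as models (not merely as elementarily equivalent structures), because $|\oP|<\delta$ means every set of size $<\delta$ in $V[H]$ is covered by one in $V$, so the $<\!\delta$-club filter on $\P_{\delta}(L^*_\alpha)$ is not disturbed and the hierarchy $(L^*_\alpha)$ is computed identically, level by level, in $V$ and in $V[H]$. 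This direct covering observation is both simpler than your proposed absorption/homogeneity argument and yields a stronger conclusion (outright equality of the two models $C(\aaq_{\delta})$), and it is precisely the part of your sketch that you left least developed; if you replace your middle step by this observation, your argument coincides with the paper's.
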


\begin{proof}
Suppose $\oP$ is a forcing notion and $\delta$ be a Woodin cardinal $>|\oP|$. Let $j:V\to M$ be the associated elementary embedding. By Proposition~\ref{thesame} we can argue
$$\CAA\equiv (\CAA)^M=C(\aaqd).$$
On the other hand, let $H\subseteq\oP$ be generic over $V$. Then $\delta$ is still Woodin
in $V[H]$, so we have the associated elementary embedding $j':V[H]\to M'$. By Proposition~\ref{thesame} we can again argue
$$(\CAA)^{V[H]}\equiv (\CAA)^{M'}=(C(\aaqd))^{V[H]}.$$
%
 Using the fact that $|\mathcal{P}|<\delta$ and  that both $C(\aaqd)^V$ and $C(\aaqd)^{V[H]}$ satisfy club determinacy  one can show by induction on $\alpha$ that 
 $$(J^*_\alpha)^V=(J^*_\alpha)^{V[H]}.$$ It follows that $$(C(\aaqd))^{V[H]}=C(\aaqd)^V.$$ 
 
Hence $$(\CAA)^{V[H]}\equiv \CAA.$$
\end{proof}


\subsection{The Continuum Hypothesis}

The fact (Theorem~\ref{abs}) that under large cardinal hypotheses the theory of $\CAA$ is forcing absolute, strongly suggest that we should be able to determine the truth value of the Continuum Hypothesis in $\CAA$. 
Indeed, in this section we  use Club Determinacy to \emph{prove} the Continuum Hypothesis in $\CAA$ (Theorem~\ref{CHH} below). The proof uses the auxiliary concepts of an aa-mouse and an aa-ultrapower, which have hopefully also other uses in the study of $\CAA$. For example we use them below  to  prove 
also $\bDiamond$ in $\CAA$. Our method yields $2^\kappa=\kappa^+$ for $\kappa\le\omega_1^V$ in $\CAA$. (Recall that $\omega_1^V$ is a measurable cardinal in $\CAA$.) Our method seems to yield also full $\GCH$ in $\CAA$\footnote{See footnote 14.}.
   Our previous paper $\cite{kmv}$ gives the consistency of the failure of $\CH$ in $C^*$ relative to the consistency of ZFC. This result extends to $\CAA$ (see Theorem~\ref{slllk} above).
\medskip

\noindent\emph{Convention:} In the rest of this Section we assume Club Determinacy.


\subsubsection{aa-premice}

Our proof uses a new inner model concept which we call aa-premouse. Roughly speaking, an aa-premouse is a pair $(M,T^*)$, where $M$ is a model and $T^*$ is an $\LL(\aaq)$-theory. Intuitively, but not 
in reality, $T^*$ is the $\LL(\aaq)$-theory of $M$. Here $M$ can very well by countable. In countable domains the $\aaq$-quantifier is eliminable, so in general we do not assume $M$ to be a model of $T^*$. Rather, $M$ is a model that has \emph{potential} to become a model of $T^*$.  We fulfil this potential  by building an $\omega_1^V$-chain of elementary extensions of $M$ with the idea that in the limit the theory $T^*$ is really true. For this purpose we define an ultrapower construction---called the aa-ultrapower---for aa-premice. It allows us to iterate a well-chosen countable aa-premouse (iterable aa-premice are called aa-mice) to a big uncountable aa-premouse $(M_{\omega_1},T_{\omega_1}^*)$ where  $T_{\omega_1}^*$ is an $\LL(\aaq)$-theory that is actually true in $M_{\omega_1}$. 

We use the concepts of aa-premouse and aa-ultrapower to prove $\CH$ in $\CAA$. The proof is reminiscent of  Silver's proof of $\GCH$ in $L^\mu$ \cite{MR0278937}. Since we assume Club Determinacy, $\omega_1^V$ is actually a measurable cardinal in $\CAA$. Thus from the point of view of $\CAA$ we start with a countable premouse and iterate it a measurable cardinal times.

We fix the following notation: $\tau_\xi=\{\rR_{\in},\rR_T,\rR_{T^*}\}\cup\{\rP_\eta:\eta<\xi\}$,
$\tau^-_\xi=\tau_\xi\setminus\{\rR_{T^*}\}$. Here $\rR_{\in}$ and $\rR_T$ are binary and $\rR_{T^*},\rP_\eta$ ($\eta<\xi$), are unary. We use $(P)_\xi$ to denote a sequence $\la P_\eta:\eta<\xi\ra$.

\begin{definition}\label{prem}
An \emph{{aa-premouse}} is a structure 
$$\bj^T_\alpha=(J^T_\alpha,\in,T,T^*,(P)_\xi)$$  in the vocabulary $\tau_\xi$ 
%
%
 such that
\begin{enumerate}

\item [(1)] $T\subseteq \alpha\times \LL(\aaq)\times J^T_\alpha$, and  for all\footnote{To simplify notation we use $\ba$ to denote $c_{a_1},\ldots, c_{a_n}$.} 
$\beta<\alpha$, the set $$T_\beta=\{\phi(\ba):(\beta,\phi(\ba))\in T, \ba\in J^T_\beta\}$$ is a complete consistent $\LL(\aaq)$-theory in the vocabulary $\tau_0^-$ extending the first order theory of
$(J^T_\beta,\in,T\rest \beta)$, where  we allow constants $c_a$ for $a\in J^T_\beta$.
\item [(2)] $T^*$ is a complete consistent $\LL(\aaq)$-theory  in the vocabulary $\tau_\xi^-$ extending the first order theory of 
$(J^{T}_\alpha,\in,T,(P)_\xi)$ with constants $c_a$ for $a\in J^{T}_\alpha$.
\item [(3)] $\la P_\eta:\eta<\xi\ra$ is a continuously increasing sequence of subsets of $J^T_\alpha$
and   $\aaqs \forall x(\rP_\eta(x)\to x\in s)\in T^*$, if $\eta<\xi$.
\item [(4)] If $\exists x\phi(x,\ba)\in T^*$, then there is $b\in J^{T}_\alpha$ such that $\phi(c_b,\ba)\in T^*$,  whenever $\phi(\vec{x})$ is an $\LL(\aaq)$-formula in the vocabulary $\tau_\xi^-$ and $\ba\in J^{T}_\alpha$.
\item [(5)] The sentence $$\aaqss\hspace{1pt}\exists x\phi(x,\vs,\ba)\to\aaqss\hspace{1pt}\exists x(\phi(x,\vs,\ba)\wedge\forall y\prec x\neg\phi(y,\vs,\ba))$$ is in $T^*$,  whenever $\phi(x,\vs,\by)$ is an $\LL(\aaq)$-formula in the vocabulary $\tau_\xi^-$ and $\ba\in J^{T}_\alpha$. (For the definition of $\prec$, see Lemma~\ref{well-order}.)
\item [(6)] The Club Determinacy schema 
\begin{equation}\label{cds}
\aaq \bt(\aaqs\phi(\ba,s,\bt\hspace{2pt})\vee
\aaqs\neg\phi(\ba,s,\bt\hspace{2pt})),
\end{equation} where $\phi(\ba,s,\bt)$ is in $\LL(\aaq)$  in the vocabulary $\tau_\xi^-$ and $\ba\in J'_\alpha$, is contained in $T^*$. 
\item [(7)] The sentences $\aaqs\exists x\neg x\in s$ and $\aaqs (\omega\subseteq s)$ are in $T^*$.

\item [(8)] If $\beta\in\alpha\cap\Lim$,  $\phi(\by)$ is an $\LL(\aaq)$-formula in the vocabulary $\tau_0^-$, $\bb\in \bj^T_\beta$, and $\phi(\bb)\in T_\beta$, then 
$\phi(\bb)^{(J^T_\beta)}\in T^*.$
\item [(9)] If $\phi(s,x,\by)$ is an $\LL(\aaq)$-formula in the vocabulary $\tau_\xi^-$ and $\ba\in J^T_\alpha$ such that $\aaqs\exists x\phi(s,x,\ba)\in T^*,$ then $\aaqs\exists x\phi(s,x,{\ba})\to
\aaqs\phi(s,f_{\phi(s,x,{\ba})}(s),{\ba})$ is in $T^*$.
Here we use the term $f_{\phi(s,x,{\ba})}(s)$ to denote the $\prec$-minimal $x$ intuitively satisfying $\phi(s,x,{\ba})$, i.e. we work in a conservative extension of $T^*$, denoted also $T^*$, which contains:
$$\aaqs\exists x\phi(s,x,{\ba})\to
\aaqs(\phi(s,f_{\phi(s,x,{\ba})}(s),{\ba})\wedge$$
$$\forall z(z\prec f_{\phi(s,x,{\ba})}(s)\to\neg\phi(s,z,{\ba}))).$$

\end{enumerate}\end{definition}
Condition 5 simply says that if we can find, for a club of $s$, an $x$ such that $\phi(s,x,\by)$, then for a club of $s$ we can find a $\prec$-\emph{minimal} $x$ such that $\phi(s,x,\by)$. This assumption allows us to have, in a sense,  definable Skolem-functions. Conditions (8)-(9) establish important \emph{coherence} between the predicates $T$ and $T^*$.

\begin{lemma}
If $(J^T_\alpha,\in,T,T^*,(P)_\xi)$ is an aa-premouse and $\beta\in\alpha\cap\Lim$, then 
$$(J^T_\beta,\in,T\cap J^T_\beta,T^*\cap J^T_\beta,(P\cap J^T_\beta)_\xi)$$ is an aa-premouse and $\bJ^T_\beta=\bJ^{T\cap J^T_\beta}_\beta$.
\end{lemma}

In harmony with Lemma~\ref{single} we now prove that Club Determinacy holds in an aa-premouse also for nested $\aaq$-quantifiers: 
 
\begin{lemma}\label{2433}
$\aaq \bt(\aaq \bs\phi(\ba,\bs,\bt\hspace{2pt})\vee
\aaq \bs\neg\phi(\ba,\bs,\bt\hspace{2pt}))\in T^*,$ where $\phi(\ba,\bs,\bt\hspace{2pt})$ is in $\LL(\aaq)$  in the vocabulary $\tau_\xi^-$.
\end{lemma}

\begin{proof}
We use induction on the length $n$ of $\bs$. For $n=1$ the claim is true by definition. Let us then assume the claim for $n$ and prove it for $n+1$. Let $\psi$ be the formula $\aaqs_2\ldots\aaqs_{n+1}\phi$. By Club Determinacy of $T^*$,
$\aaq\bt(\aaqs_1\psi \vee \aaqs_1\neg\psi)\in T^*$. By Induction Hypothesis, $\aaq\bt\aaqs_1(\neg\psi\leftrightarrow \aaqs_2\ldots\aaqs_{n+1}\neg\phi)\in T^*$. Hence $\aaq\bt(\aaqs_1\psi\vee\aaqs_1\ldots\aaqs_{n+1}\neg\phi)\in T^*$, as desired.
\end{proof}


\begin{definition}\label{ele}Suppose $\bj^T_\alpha=(J^T_\alpha,\in,T,T^*,(P)_\xi)$ is an aa-premouse  and $\bj^S_\beta=(J^S_\beta,\in,S,S^*,(P')_{\xi'})$  is an aa-premouse with $\xi\le\xi'$ and $\alpha\le\beta$.
 A mapping $\pi:J^T_\alpha\to J^S_\beta$ is called a \emph{{weak elementary embedding}} of $\bj^T_\alpha$ into $\bj^S_\beta$, in symbols $$\pi:\bj^T_\alpha\to\bj^S_\beta,$$ if $\pi$ is a first order elementary embedding  
 $$(J^T_\alpha,\in,T,(P)_\xi)\to (J^S_\beta,\in,S,(P')_{\xi'})\rest \tau_\xi^-$$ and for all $ \phi(\bx)\in \LL(\aaq)$ in the vocabulary $\tau^-_\xi$ and all $\ba\in J^T_\alpha$, $$ \phi(\ba)\in T^*\iff \phi(\pi(\ba))\in S^*.$$ 
\end{definition}

\begin{example}\label{ex2} The {canonical} example of an aa-premouse  is 
$$\mn=(J'_\alpha,\in,\Tr\rest\alpha,\Tr_\alpha,(P)_0),$$
where $\Tr$ and $\tr_\alpha$ are as in Definition~\ref{definJ} and $(P)_0$ is the empty sequence. Note that $\mn\in \CAA$. 
We obtain other examples of aa-premice by taking elementary substructures  
of $\mn$. Since $\mn\in \CAA$, we can take such also inside $\CAA$.
\end{example}


\subsubsection{The aa-ultrapower}

We define now what we call the \emph{aa-ultrapower} $(M,E,S,S^*)$ of an aa-premouse $\bj^T_\alpha$. We do not use an ultrafilter for the construction, but rather the family $\F$ of $\LL(\aaq)$-definable sets which contain (in $V$) a club of countable subsets of $J^T_\alpha$. Since we assume Club Determinacy,
this family behaves sufficiently like an ultrafilter. Thus, intuitively we define $$M=_{\mbox{\tiny def}}
(\bj^T_\alpha)^{\Pw_{\omega_1}(J^T_\alpha)}/\F,$$ where ${\Pw_{\omega_1}(J^T_\alpha)}$ is computed in $V$.
However, in the end, we cannot define $M$ in this way, at least if we want to build $M$ inside $\CAA$. We certainly cannot count on $\Pw_{\omega_1}(J^T_\alpha)$ being in $\CAA$, even though $J^T_\alpha\in \CAA$, and even though we can define sets in $\CAA$ by reference to clubs in $\Pw_{\omega_1}(J^T_\alpha)$.  

In order to prove  $\CH$ in $\CAA$, we want to build the ultrapower $M$ in $\CAA$ and therefore we modify the usual ultraproduct construction in a special way. Instead of defining $M$ as the set of equivalence classes of definable functions 
$f:\Pw_{\omega_1}(J^T_\alpha)\to J^T_\alpha$, 
we define $M$ as the set of equivalence classes of $\LL(\aaq)$-formulas $\phi(s,x)$ that define functions $f:\Pw_{\omega_1}(J^T_\alpha)\to J^T_\alpha$.

Let us now go into the details. 

\begin{definition}\label{product}
Suppose $(J^T_\alpha,\in,T,T^*,(P)_\xi)$ is an aa-premouse. 
\begin{enumerate}
\item Let $M'$ be the set of all $\phi(s,x,{\ba})$ in $\LL(\aaq)$  in the vocabulary $\tau^-_\xi$, where   ${\ba}\in J^T_\alpha$ and  $\aaqs\exists x\phi(s,x,{\ba})\in T^*$.  
\item Define in $M'$:
$$\phi(s,x,{\ba})\sim \phi'(s,x,{\ba'})\iff \aaqs(f_{\phi(s,x,{\ba})}(s)=f_{\phi'(s,x,{\ba'})}(s))\in T^*.$$
\end{enumerate}
\end{definition}

Note that $\sim$ is an equivalence relation in $M'$. Moreover,
if (1) $\rR\in\tau^-_\xi$, 
(2) the sentence $\aaqs \rR(f_{\phi_1(s,x,{\ba_1})}(s),\ldots, f_{\phi_n(s,x,{\ba_n})}(s))$ is in $T^*$, and (3) $\phi_i(s,x,{\ba_i})\sim \phi'_i(s,x,{\ba'_i})$ for $i=1,\ldots,n$, then we may easily conclude
that $$\aaqs \rR(f_{\phi'_1(s,x,{\ba'_1})}(s),\ldots, f_{\phi'_n(s,x,{\ba'_n})}(s))$$ is in $T^*$.
%
%





\begin{definition}[aa-ultrapower]\label{newpred}
The \emph{aa-ultrapower} of an aa-premouse $$(J^T_\alpha,\in,T,T^*,(P)_\xi),$$  denoted $\ult(J^T_\alpha,\in,T,T^*,(P)_\xi)$, is the $\tau_{\xi+1}$-structure $$\bM=(M,E,S,S^*,(P')_{\xi+1}),$$ where
\begin{enumerate}

\item $M$ is the set  of equivalence classes 
$[\phi(s,x,{\ba})]$ of $\sim$ on $M'$.

\item $[\phi_1(s,x,{\ba_1})]E[\phi_2(s,x,{\ba_2})]\mbox{ iff }
\aaqs \rR_\in(f_{\phi_1(s,x,{\ba_1})}(s),f_{\phi_2(s,x,{\ba_2})}(s))\in T^*.$

\item $([\phi_1(s,x,{\ba_1})],[\phi_2(s,x,{\ba_2})])\in S\mbox{ iff }
\aaqs \rR_T(f_{\phi_1(s,x,{\ba_1})}(s),f_{\phi_2(s,x,{\ba_2})}(s))$ $\in T^*.$

\item  $S^*$ consists of
$\psi(\rP_{\xi},[{\phi_1(s,x,\ba)}],\ldots,[{\phi_n(s,x,\ba)}]),$ where
$\psi(s,x_1,\ldots,x_n)$ is a $\tau_\xi^-$-formula of $\LL(\aaq)$ such that
$$ \aaqs\psi(s,f_{\phi_1(s,x,\ba)}(s),\ldots,f_{\phi_n(s,x,\ba)}(s))\in T^*.$$

\item $[\phi(s,x,{\ba})]\in P'_\eta\mbox{ iff }
\aaqs \rP_\eta(f_{\phi(s,x,{\ba})}(s))\in T^*,$
for $\eta<\xi.$

\item $P'_{\xi}=\{j(a):a\in J^T_\alpha\}$, where  $j:J^T_\alpha\to M$ is the {\em canonical embedding} $j(a)=[x=a]$. 

\end{enumerate}

\end{definition}

Note that $\ult(J^T_\alpha,\in,T,T^*,(P)_\xi))$ has  one unary predicate more in its vocabulary than  $(J^T_\alpha,{\in,}T,T^*,(P)_\xi)$ itself, namely $P_{\xi}$. Thus the aa-ultrapower extends the model but also expands the vocabulary.
These new unary predicates play a crucial role when we apply  aa-ultrapowers.

We will show that $\ult(J^T_\alpha,\in,T,T^*,(P)_\xi))$, if well-founded, is an aa-premouse. To that end we need a sequence of lemmas.

\begin{lemma}\label{consistency}
 $S^*$ is a  complete and consistent $\LL(\aaq)$-theory  in the vocabulary $\tau_{\xi+1}^-$  with constants $c_a$ for $a\in M$. 
\end{lemma}

\begin{proof}As to consistency,
suppose $\psi_i(\rP_\xi,[{\phi_1(t,x,\ba)}],\ldots,[{\phi_n(t,x,\ba)}])$, where $i=1,\ldots,m$, is a finite set of sentences in $S^*$ such that 
\begin{equation}\label{dops}
\bigwedge_{i=1}^m\psi_i(\rP_\xi,,[{\phi_1(s,x,\ba)}],\ldots,[{\phi_n(s,x,\ba)}])\vdash\bot.
\end{equation}
 By the definition of $S^*$, for $i=1,\ldots,m$ $$ \bigwedge_{i=1}^m\aaqs\psi_i(s,f_{\phi_1(s,x,\ba)}(s),\ldots,f_{\phi_n(s,x,\ba)}(s))\in T^*,$$ whence
$$ \aaqs\bigwedge_{i=1}^m\psi_i(s,f_{\phi_1(s,x,\ba)}(s),\ldots,f_{\phi_n(s,x,\ba)}(s))\in T^*.$$ It can be shown by induction on $\LL(\aaq)$-proofs that (\ref{dops}) implies
$$\aaq  s\bigwedge_{i=1}^m\psi_i(s,f_{\phi_1(s,x,\ba)}(s),\ldots,f_{\phi_n(s,x,\ba)}(s))\vdash\aaqs\bot,$$ whence $\aaqs\bot\in T^*$, contrary to the consistency of $T^*$. 

Completeness follows from Club Determinacy.
\end{proof}

\begin{lemma}\label{countable}
$\aaqs\forall x(\rP_{\xi}(x)\to s(x))\in S^*$ i.e. $\rP_{\xi}$ is ``countable" in the sense of $S^*$.
\end{lemma}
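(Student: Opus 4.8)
The plan is to recognize the target sentence as a single instance of the defining scheme for $T^*$ and thereby reduce it to a provable sentence of $\LL(\aaq)$ about $T$. Observe that $\aaq s\forall x(P^*(x)\to s(x))$ is exactly $\psi(P^*)$, where $\psi(u)$ is the $\tau$-formula
$$\psi(u)\ :=\ \aaq s\,\forall x(u(x)\to s(x))$$
of $\LL(\aaq)$ with a single free set variable $u$ and no free individual variables (so $n=0$ in the notation of Definition~\ref{newpred}); here $P^*$ occupies the slot of the free set variable $u$, precisely as $P^*$ plays the role of the ``generic'' set variable in the definition of $T^*$. By Definition~\ref{newpred} (equivalently, the ``if''-direction of Lemma~\ref{gh}, so that club determinacy is not even needed here), $\psi(P^*)\in T^*$ provided that $\aaq u\,\psi(u)\in T$, i.e. provided that
$$\aaq u\,\aaq s\,(u\sub s)\in T,$$
where I write $u\sub s$ for $\forall x(u(x)\to s(x))$.

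It therefore remains to check that $\aaq u\,\aaq s(u\sub s)\in T$. This is immediate from the axioms of stationary logic: axiom (A2) in (\ref{axioms}) is precisely $\aaq s(u\sub s)$, a valid formula with free set variable $u$ (for any countable $A$, the collection of its countable supersets is club). Prefixing the quantifier $\aaq u$ is sound, since a universally valid formula holds on the club $\P_{\omega_1}(M)$, so $\aaq u\,\aaq s(u\sub s)$ is a valid sentence and hence an $\LL(\aaq)$-theorem. As $T$ is a complete $\LL(\aaq)$-theory consistent with the axioms (clause 2 of Definition~\ref{prem}), it contains every $\LL(\aaq)$-theorem: were $\neg\aaq u\,\aaq s(u\sub s)$ in $T$, then $T$ together with the axioms would be inconsistent. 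Thus $\aaq u\,\aaq s(u\sub s)\in T$, which is exactly what we needed.

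The only delicate point is the bookkeeping in the first paragraph: one must match $P^*$ to the free set variable of $\psi$ (rather than to one of the individual-variable slots), keep the bound set variable $s$ of $\psi$ syntactically distinct from the free variable $u$, and note that $\psi$ is itself allowed to contain an occurrence of $\aaq$, which is permitted since Definition~\ref{newpred} ranges over arbitrary $\tau$-formulas of $\LL(\aaq)$. Once this identification is in place the statement is a one-line consequence of (A2); in particular it uses neither parameters nor club determinacy, which fits the intuition that $P^*=j[M]$ plays the role of the generic countable set of the aa-ultrapower and is therefore ``contained in a countable set'' from the standpoint of $T^*$.
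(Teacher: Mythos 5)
Your proof is correct and is essentially the paper's own argument: the paper likewise observes that $\aaq t\,\aaq s\,\forall x(t(x)\to s(x))\in T$ (which it labels ``clearly'', where you supply the justification via axiom (A2) and the completeness of $T$) and then applies Definition~\ref{newpred} with $P^*$ in the set-variable slot and $n=0$. No gap; your extra bookkeeping about variable matching only makes explicit what the paper leaves implicit.
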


\begin{proof} Clearly, $\aaqt
\aaqs\forall x(t(x)\to s(x))\in T^*$. Hence by Definition \ref{newpred},  we have
$\aaqs\forall x(\rP_{\xi}(x)\to s(x))\in S^*.$
\end{proof}

We prove an analogue of \L o\'s Lemma first for first order formulas only. In fact, we do not have proper control of truth of formulas of stationary logic in the potentially countable model $\bM$.

\begin{lemma}[\L o\'s Lemma for first order formulas]\label{los1} 
Suppose $(J^T_\alpha,\in,T,T^*,(P)_\xi)$ is an aa-premouse  and $$\bM=\ult(J^T_\alpha,\in,T,T^*,(P)_\xi)=(M,E,S,S^*,(P')_{\xi+1}).$$ The following  are equivalent for first order formulas $\phi(s,x_1,\ldots,x_n)$ in the vocabulary $\tau^-_{\xi+1}$:
\begin{description}
\item [(1)] $\bM\models\phi(\rP_{\xi},[\phi_1(s,x,{\ba_1})],\ldots, [\phi_n(s,x,{\ba_n})])$.
\item [(2)] $\aaqs\phi(s,f_{\phi_1(s,x,{\ba_1})}(s),\ldots, f_{\phi_n(s,x,{\ba_n})}(s))\in T^*$
\end{description}

\end{lemma}

\begin{proof}
This is proved by induction on $\phi(s,x_1,\ldots,x_n)$.  The case of the atomic formula $\rP_{\xi}(x)$ follows from Definition~\ref{newpred} (6) and axiom (A2). As the formula $\phi(s,x_1,\ldots,x_n)$ is first order, the only other case that requires a proof is the case of the existential quantifier.
Assume $\phi(s,x_1,\ldots,x_n)$ is $\exists y\psi(y,s,x_1,\ldots,x_n)$. Then: 
$$\begin{array}{l}
\bM\models\phi(\rP_{\xi},[\phi_1(s,x,{\ba_1})],\ldots, [\phi_n(s,x,{\ba_n})])\Rightarrow\\
\bM\models \exists y\psi(y,\rP_{\xi},[\phi_1(s,x,{\ba_1})],\ldots, [\phi_n(s,x,{\ba_n})])\Rightarrow\\
\bM\models\psi([\theta(s,x,\bb)],\rP_{\xi}, [\phi_1(s,x,{\ba_1})],\ldots, [\phi_n(s,x,{\ba_n})])\mbox{ for some $\theta(s,x,\bb)$}\Rightarrow\\
\aaqs \psi(f_{\theta(s,x,\bb)}(s), s,f_{\phi_1(s,x,{\ba_1})}(s)
,\ldots, f_{\phi_n(s,x,{\ba_n})}(s))\in T^*\mbox{ for some $\theta(s,x,\bb)$}\Rightarrow\\
\aaqs \exists y\psi(y, s,f_{\phi_1(s,x,{\ba_1})}(s)
,\ldots, f_{\phi_n(s,x,{\ba_n})}(s))\in T^*\Rightarrow\\
\aaqs\phi(s,f_{\phi_1(s,x,{\ba_1})}(s),\ldots, f_{\phi_n(s,x,{\ba_n})}(\bs))\in T^*\\
\end{array}$$ and, on the other hand, letting $\theta(s,y,\ba_1,\ldots,\ba_n)$ be the formula 
$$\psi(y, s,f_{\phi_1(s,x,{\ba_1})}(s),\ldots, f_{\phi_n(s,x,{\ba_n})}(s)),$$ we obtain
$$\begin{array}{l}
\aaqs\phi(s,f_{\phi_1(s,x,{\ba_1})}(s),\ldots, f_{\phi_n(s,x,{\ba_n})}(s))\in T^*\Rightarrow\\
\aaqs \exists y\psi(y, s,f_{\phi_1(s,x,{\ba_1})}(s),\ldots, f_{\phi_n(s,x,{\ba_n})}(s))\in T^*\Rightarrow\\
\aaqs \psi(f_{\theta(s,y,\ba_1,\ldots,\ba_n)}(s), s,f_{\phi_1(s,x,{\ba_1})}(s),\ldots, f_{\phi_n(s,x,{\ba_n})}(s))\in T^*\Rightarrow\\
\bM\models\psi([\theta(s,y,\ba_1,\ldots,\ba_n],\rP_{\xi},[\phi_1(s,x,{\ba_1})],\ldots, [\phi_n(s,x,{\ba_n})])\Rightarrow\\
\bM\models \exists y\psi(y,\rP_{\xi},[\phi_1(s,x,{\ba_1})],\ldots, [\phi_n(s,x,{\ba_n})])\Rightarrow\\
\bM\models\phi(\rP_{\xi},[\phi_1(s,x,{\ba_1})],\ldots, [\phi_n(s,x,{\ba_n})])\end{array}$$
\end{proof}

We can now show that the canonical embedding is a first order elementary embedding:

\begin{lemma}\label{firstorder}
If $\phi$ is a first order formula in $\tau^-_0$, then the following conditions are equivalent:
\begin{description}
\item [(1)] $\bj^T_\alpha\models\phi(\ba)$.
\item [(2)] $\bM\models\phi(j(\ba))$.
\end{description}
\end{lemma}

\begin{proof}
If (1) holds, then $\phi(\ba)\in T^*$, whence $\aaqs \phi(\ba)\in T^*$, and further $\bM\models \phi(j(\ba))$ by Lemma~\ref{los1}.
If (1) fails, then $\neg\phi(\ba)\in T^*$, whence $\aaqs \neg\phi(\ba)\in T^*$, and further $\bM\not\models \phi(i(\ba))$, by the consistency of $T^*$.
\end{proof}

\begin{lemma}\label{2777}\label{preserved}
$j[P_\eta]= P'_\eta$ for any predicate $\rP_\eta\in \tau^-_\xi$.
\end{lemma} 

\begin{proof}
The inclusion $j[P_\eta]\subseteq P'_\eta$ is trivial. The opposite direction follows from Axiom (A5) and Definition~\ref{prem} (3).
\end{proof}

It is a consequence of the above Lemma that the following  are equivalent for first order formulas $\phi(x_1,\ldots,x_n)$ in the vocabulary $\tau^-_0$:
\begin{description}
\item [(1)] $\phi(\ba)\in T_\beta$.
\item [(2)] $\phi(j(\ba))\in S_{j(\beta)}$.
\end{description}

Now we prove an analogue of \L o\'s Lemma for $\LL(\aaq)$. Unlike in Lemma~\ref{los1} we do not talk about truth in the aa-ultrapower but only about membership in the theories $T^*$ and $S^*$.

\begin{lemma}[\L o\'s Lemma for $\LL(\aaq)$]\label{gh} Suppose $(J^T_\alpha,\in,T,T^*,(P)_\xi)$ is an aa-premouse  and $$(M,E,S,S^*,(P')_{\xi+1})=\ult(J^T_\alpha,\in,T,T^*,(P)_\xi).$$ The following  are equivalent for $\LL(\aaq)$-formulas $\phi(\rP_\xi,x_1,\ldots,x_n)$ in the vocabulary $\tau^-_{\xi+1}$:
\begin{description}
\item [(1)] $\phi(\rP_{\xi},[\phi_1(s,x,{\ba_1})],\ldots, [\phi_n(s,x,{\ba_n})])\in S^*$.
\item [(2)] $\aaqs\phi(s,f_{\phi_1(s,x,{\ba_1})}(s),\ldots, f_{\phi_n(s,x,{\ba_n})}(s))\in T^*$.
\end{description}

\end{lemma}

\begin{proof}The implication (2)$\to$(1) is built into the definition of $S^*$. If (2) fails, then 
$\neg\aaqs\phi(s,f_{\phi_1(s,x,{\ba_1})}(s),\ldots, f_{\phi_n(s,x,{\ba_n})}(s))\in T^*$, by the completeness of $T^*$
in the vocabulary $\tau_\xi^-$. By Club Determinacy of $T^*$, $$\aaqs\neg\phi(s,f_{\phi_1(s,x,{\ba_1})}(s),\ldots, f_{\phi_n(s,x,{\ba_n})}(s))\in T^*,$$ whence $\neg\phi(\rP_{\xi},[\phi_1(s,x,{\ba_1})],\ldots, [\phi_n(s,x,{\ba_n})])\in S^*$.  Now (1) fails because of the consistency of $S^*$ (Lemma~\ref{consistency}).\end{proof}

It is a consequence of the above Lemma that the following  are equivalent for $\LL(\aaq)$-formulas $\phi(x_1,\ldots,x_n)$ in the vocabulary $\tau^-_\xi$:
\begin{description}
\item [(1)] $\phi([\phi_1(s,x,{\ba_1})],\ldots, [\phi_n(s,x,{\ba_n})])\in S^*$.
\item [(2)] $\aaqs\phi(f_{\phi_1(s,x,{\ba_1})}(s),\ldots, f_{\phi_n(s,x,{\ba_n})}(s))\in T^*$.
\end{description}
In particular,  $\phi(\ba)\in T^*$ iff $\phi(j(\ba))\in S^*$ for $\LL(\aaq)$-sentences $\phi$ in the 
vocabulary $\tau^-_\xi$.

\begin{lemma}\label{wf}The aa-ultrapower
$(M,E,S,S^*,(P')_{\xi+1})$, if well-founded, collapses to an aa-premouse 
$(J^{\bar{T}}_\beta,\in,\bar{T},\bar{T}^*,(\bP)_{\xi+1})$ with vocabulary $\tau_{\xi+1}$. 
The canonical mapping $j$, composed with the collapse function $\pi:(M,E)\to(J^{\bar{T}}_\beta,\in)$, is a weak elementary embedding\footnote{In the sense of Definition~\ref{ele}.} $$i:(J^T_\alpha,\in,T,T^*,(P)_{\xi})\to(J^{\bar{T}}_\beta,\in,\bar{T},\bar{T}^*,(\bP)_{\xi+1}).$$ 
\end{lemma}

\begin{proof}It follows from Lemma~\ref{firstorder}, that the aa-ultrapower
$(M,E,S,S^*,(P')_{\xi+1})$, if well-founded, collapses to  a structure of the type $(J^{\bar{T}}_\beta,\in,\bar{T},\bar{T}^*,(\bar{P})_{\xi+1})$ with vocabulary $\tau_{\xi+1}$. We  only have to show that the conditions of Definition~\ref{prem} are satisfied by this structure.

(1) It follows from Lemma~\ref{firstorder} and an argument similar to the proof of Lemma~\ref{consistency}, that $\bT\subseteq \beta\times \LL(\aaq)$, and  for all 
$\gamma\in\beta\cap\Lim$, the set $\bT_\gamma=\{\phi(\ba):(\gamma,\phi(\ba))\in \bT,$ and $\ba\in J^{\bar{T}}_\gamma\}$ is a complete consistent $\LL(\aaq)$-theory in the vocabulary $\tau^-_{0}$ extending the first order theory of $(J^{\bar{T}}_\gamma,\in,\bT\rest \gamma)$, where we allow constants $c_a$ for $a\in J^{\bar{T}}_\gamma$.

(2) The completeness and consistency of $S^*$ was already proved in Lemma~\ref{consistency}.
By  Lemma~\ref{los1}, $S^*$ extends the first order theory of 
$(J^{\bT}_\beta,\in,\bT,(\bP)_{\xi+1})$.



(3) By Lemma \ref{2777} the sequence $(P')_{\xi}$ is continuously increasing. Moreover, Definition~\ref{prem} (3) implies $P'_\eta\subseteq P'_{\xi}$ for all $\eta< \xi$.

(4) We show, that if $\exists x\phi(x,\rP_{\xi},\pi(\ba))\in \bT^*$, where $\ba\in M$, then there is $b\in M$ such that $\phi(\pi(b),\rP_{\xi},\pi(\ba))\in \bT^*$. Assume $\ba$ has length one, for simplicity. Accordingly, suppose $\exists x\phi(x,\rP_{\xi},\pi([\psi(s,x,\bc)]))\in \bT^*$ for some $\psi(s,x,\bc)$ with $\bc\in J'_\alpha$ and $\aaqs\exists x
\psi(s,x,\bc)\in T^*$. This implies $\exists x\phi(x,\rP_{\xi},[\psi(s,x,\bc)])\in S^*$. Hence $\aaqs\exists x
\phi(x,s,f_{\psi(s,x,\bc)}(s))\in T^*$. Therefore $$\aaqs\phi(f_{\phi(x,s,f_{\psi(s,x,\bc)}(s))}(s),s,f_{\psi(s,x,\bc)}(s))\in T^*.$$ Hence
$$\phi([\phi(x,s,f_{\psi(s,x,\bc)}(s))],\rP_{\xi},[\psi(s,x,\bc)])\in S^*.$$ We let $b=[\phi(x,s,f_{\psi(s,x,\bc)}(s))]$ and the claim is proved.

(5) If $$\begin{array}{l}
\aaqss\exists x\phi(\rP_{\xi},x,\vs,\pi(\ba))\to\\
\aaqss\exists x(\phi(\rP_{\xi},x,\vs,\pi(\ba))\wedge\forall y\prec x\neg\phi(\rP_{\xi},y,\vs,\pi(\ba)))
\end{array}$$ is not in $\bT^*$, then $\aaqss\exists x\phi(\rP_{\xi},x,\vs,\pi(\ba))\in \bT^*$ and $$\neg\aaqss\exists x(\phi(\rP_{\xi},x,\vs,\pi(\ba))\wedge\forall y\prec x\neg\phi(\rP_{\xi},y,\vs,\pi(\ba)))$$ is in $\bT^*$. Hence $\aaq t\aaqss\exists x\phi(t,x,\vs,\ba)\in T^*$ and $$\aaq t\neg\aaqss\exists x(\phi(t,x,\vs,\ba)\wedge\forall y\prec x\neg\phi(t,y,\vs,\ba))\in T^*.$$ By Club Determinacy  of $T^*$, $$\aaq t\aaqss\neg\exists x(\phi(t,x,\vs,\ba)\wedge\forall y\prec x\neg\phi(t,y,\vs,\ba))\in T^*,$$ a contradiction with the assumption that $T^*$ satisfies condition (5) of Definition~\ref{prem}.

(6) We show that the Club Determinacy schema (\ref{cds}) is contained in $\bT^*$. Suppose $\psi(\rP_{\xi},x_1,\ldots,x_n)$ is an $\LL(\aaq)$-formula in the vocabulary $\tau^-_\xi$ and 
$$[\phi_1(u,x,\ba)],\ldots,[\phi_n(u,x,\ba)]\in M.$$ By the Club Determinacy  
of $T^*$ 
$$\begin{array}{l}
\aaq u\aaq\bt(\aaqs\psi(u,\bt,f_{\phi_1(u,x,\ba)}(u),\ldots,f_{\phi_n(u,x,\ba)}(u))\vee\\
\hspace{1cm}\aaqs\neg\psi(u,\bt,f_{\phi_1(u,x,\ba)}(u),\ldots,f_{\phi_n(u,x,\ba)}(u)))\in T^*.
\end{array}$$
 Hence,
$$\begin{array}{l}
\aaq\bt(\aaqs\psi(\rP_{\xi},\bt,[\phi_1(u,x,\ba)],\ldots,[\phi_n(u,x,\ba)])\vee\\
\hspace{1cm}\aaqs\neg\psi(\rP_{\xi},\bt,[\phi_1(u,x,\ba)],\ldots,[\phi_n(u,x,\ba)]))\in S^*.
\end{array}$$

(7)  It is a consequence of Lemma~\ref{gh} and  $\{\aaqs\exists x\neg x,\aaqs(\omega\subseteq s)\}\subseteq T^*$, that $\aaqs\exists x\neg x\in s$ and $\aaqs(\omega\subseteq s)$ are in $\bT^*$.
 
(8) This condition can be expressed, in view of Definition~\ref{prem} (4) as the membership of a universal sentence in $T^*$. The vocabulary of this sentence is $\tau_0^-$, and so it is also an element of $S^*$. 


(9) This is obvious.

Finally, $j$ is weak elementary by Lemma~\ref{los1} and because
$$\phi(a_1,\ldots,a_n)\in T^*\iff \aaqs \phi(f_{x=a_1}(s),\ldots,f_{x=a_n}(s))\in T^*$$
$$\iff \phi([x=a_1],\ldots,[x=a_n])\in S^*\iff \phi(j(a_1),\ldots,j(a_n))\in S^*.$$

\end{proof}

\begin{lemma}\label{proper}
$j[J^T_\alpha]\ne M$.
\end{lemma}

\begin{proof}
We consider $[\neg x\in s]\in M$. Suppose $[\neg x\in s]=i(a)$ for some $a\in J^T_\alpha$, i.e. $[ \neg x\in s]=[x=a]$. Then $\aaqs (f_{\neg x\in s}(s)=f_{x=a}(s))\in T$, whence $\aaqs (a\not\in s)\in T$. But by Axiom (A2)  of stationary logic $\aaqs (a\in s)\in T$, a contradiction.
\end{proof}

\begin{lemma}
If there is $\gamma<\alpha$ such that $\neg\aaqs(\gamma\subseteq s)\in T^*$ and $\gamma$ is the least such, then $\gamma$ is the critical point of $j$.
\end{lemma}

\begin{proof}Suppose $\alpha<\gamma$.
If $[\phi(s,x,\ba)]<j(\alpha)$, then Axiom A5 (Fodor's Lemma) implies that there is $\delta<\gamma$ such that $[\phi(s,x,\ba)]=\delta$. Hence $i(\alpha)=\alpha$. On the other hand $[x\in \gamma\wedge x\notin s]$ demonstrates that $i(\gamma)>\gamma$.
\end{proof}

We shall now prove an important Lemma which, among other things, shows that for the kind of aa-premice that we are mainly interested in, namely those arising from Example~\ref{ex2}, the aa-ultrapower of a well-founded aa-premouse is well-founded.

\begin{definition} A $\tau_\xi$-structure  $(J'_\beta,\in,{\tr}\rest\beta, {\tr}',(P)_\xi)$ is called \emph{aa-like with respect to (w.r.t.) $M$}, $M\subseteq J'_\beta$, if ${\tr}'$ is a complete consistent $\LL(\aaq)$-theory in the vocabulary $\tau^-_\xi$ with parameters from $J'_\beta$,  ${\tr}'\restriction\tau^-_0= \tr_\beta$ and $$\phi(\rP_{\gamma_1},\ldots \rP_{\gamma_n},\ba)\in {\tr}'\Rightarrow(J'_\beta,\in,{\tr}\rest\beta, (P)_\xi)\models\aaqs
 \phi(\rP_{\gamma_1},\ldots \rP_{\gamma_{n-1}},s,\ba)$$ for all sentences $\phi(\rP_{\gamma_1},\ldots \rP_{\gamma_n},\ba)\in\LL(\aaq)$  in the vocabulary $\tau^-_0\cup\{\rP_{\gamma_1},\ldots,\rP_{\gamma_n}\}$, where $\gamma_1<\ldots<\gamma_n<\xi$, and for all  $\ba\in M$.
\end{definition}

\begin{lemma}\label{wff}  Suppose $(J^T_\alpha,\in,T,T^*,(P)_\xi)$ is a countable  aa-premouse and 

\begin{equation}\label{weak}
\begin{array}{l}\pi:{(J^T_\alpha,\in,T,T^*,(P)_\xi)}\to
N=(J'_\beta,\in,{\tr}\rest\beta,{\tr}',(P')_\xi)
\end{array}
\end{equation}
 is a weak elementary embedding such that $N$ is aa-like w.r.t. $\ran(\pi)$. There are
$P'_{\xi}\subseteq J'_\beta$ and  a weak elementary
$$\pi^*:\ult(J^T_\alpha,\in,T,T^*,(P)_\xi)\to \bar{N}=(J'_\beta,\in,{\tr}\rest\beta,{\tr}'',(P')_{\xi+1})$$ 
such that   $\pi^*(i(a))=\pi(a)$ for all $a\in J^T_\alpha$, and  $\bar{N}$ is aa-like w.r.t.
$\ran(\pi^*)$.
\end{lemma}

\begin{proof}
Suppose $[\phi(s,x,{\ba})]\in M$. Then $\aaqs\exists x\phi(s,x,{\ba})\in T^*$, whence we have $\aaqs\exists x\phi(s,x,\pi({\ba}))\in {\tr}'$ and hence by aa-likeness, $N\models\aaqs\exists x\phi(s,x,\pi({\ba}))$. Let $C_{{\ba},\phi}$ be a club of countable subsets $s$ of $J'_{\beta}$ such that $N\models\exists x\phi(s,x,\pi({\ba}))$. Let $Q$ be the intersection of the countably many $C_{{\ba},\phi}$ where ${\ba}\in J^T_\alpha$, $\phi\in\LL(\aaq)$, and $\aaqs\exists x\phi(s,x,\pi({\ba}))\in {\tr}'$. Let us fix $s^*\in Q$. Note that $s^*$ need not be in $\CAA$. Now for all ${\ba}\in J^T_\alpha$ and $\phi\in\LL(\aaq)$ such that $\aaqs\exists x\phi(s,x,\pi({\ba}))\in {\tr}'$ there is a $\prec$-least $z_{\ba,\phi}\in N$ such that $N\models\phi(s^*,z_{\ba,\phi},\pi({\ba}))$, i.e. $f_{\phi(s,x,{\pi(\ba)})}(s^*)=z_{\ba,\phi}$.  We let $\pi^*([\phi(s,x,{\ba})])=z_{\ba,\phi}$ and $P'_{\xi}=s^*$. 
Obviously, $\pi^*(j(a))=\pi(a)$ for all $a\in J^T_\alpha$.
Let ${\tr}''$ be a complete extension  of ${\tr'}$ together with the  $\tau^-_{\xi+1}$-sentences $$\psi(\rP_{\xi},\pi^*([\phi_1(s,x,\ba)]),\ldots,\pi^*([\phi_m(s,x,\ba)]))$$ such that 
$$\aaqs \psi(s,f_{\phi_1(s,x,\pi(\ba))}(s),\ldots, f_{\phi_n(s,x,\pi(\ba))}(s))\in {\tr}'.$$

Clearly now  $\bar{N}$ is aa-like w.r.t. $\ran(\pi^*)$. 

Now we prove that $\pi^*$ is elementary. Because of  Club Determinacy of $T^*$ it suffices to prove one direction.
Suppose  
$$(M,E,S,(P)_{\xi+1})\models \psi(\rP_{\xi},[\phi_1(s,x,\ba)],\ldots,[\phi_m(s,x,\ba)]),$$ where  $\psi(s,x_1,\ldots,x_m)$ is a first order $\tau^-_\xi$-formula. By Lemma~\ref{los1}
$$\aaqs \psi(s,f_{\phi_1(s,x,\ba)}(s),\ldots, f_{\phi_n(s,x,\ba)}(s))\in T^*,$$ 
whence 
by (\ref{weak}) $$\aaqs \psi(s,f_{\phi_1(s,x,\pi(\ba))}(s),\ldots, f_{\phi_n(s,x,\pi(\ba))}(s))\in {\tr}'.$$
Hence
$$\psi(\rP_{\xi},\pi^*([\phi_1(s,x,\pi(\ba))]),\ldots,\pi^*([\phi_m(s,x,\pi(\ba))]))\in {\tr}''.$$


Suppose next $ \psi(\rP_\xi,\bx)\in \LL(\aaq)$ in the vocabulary $\tau^-_{\xi+1}$ and   $$\psi(\rP_\xi,[\phi_1(s,x,\ba)],\ldots,[\phi_m(s,x,\ba)])\in S^*.$$ By the definition of $S^*$ in Definition~\ref{newpred}, condition 4,
$$\aaqs \psi(s,f_{\phi_1(s,x,\ba)}(s),\ldots, f_{\phi_n(s,x,\ba)}(s))\in T^*,$$ whence by (\ref{weak})
$$\aaqs \psi(s,f_{\phi_1(s,x,\pi(\ba))}(s),\ldots, f_{\phi_n(s,x,\pi(\ba))}(s))\in{\tr}'.$$
Hence
$$\psi(\rP_{\xi},\pi^*([\phi_1(s,x,\pi(\ba))]),\ldots,\pi^*([\phi_m(s,x,\pi(\ba))]))\in{\tr}''.$$ 
%
%
\end{proof}

We can iterate the aa-ultrapower construction and this will be henceforth our main tool:

\begin{definition}\label{system}
We define a directed system 
\begin{equation}\label{aaiteration}
\langle(M_\beta,E_\beta,T_\beta,T^*_\beta,
(P^\beta)_\beta),j_{\alpha,\beta}:\alpha<\beta\le\omega_1\rangle
\end{equation} of  structures,\footnote{Each structure $(M_\beta,E_\beta)$ will be shown to be well-founded, when we actually use 
this construction, so then these structures are  aa-premice, up to isomorphism.}
called an  \emph{aa-iteration starting from} $(M_0,\in,T_0,T^*_0,(P^0)_0)$, as follows:
\begin{description}
\item[(1)] The vocabulary of the structure $(M_\beta,E_\beta,T_\beta,T^*_\beta,(P^\beta)_\beta)$ is $\tau_\beta$.

\item [(2)]We have a commuting system of weak elementary embeddings
$$j_{\alpha\beta}:(M_\alpha,E_\alpha,T_\alpha,T^*_\alpha,(P^\alpha)_\alpha)\to (M_\beta,E_\beta,T_\beta,T^*_\beta,(P^\beta)_\beta)\rest\tau_\alpha.$$ 

\item[(3)]   $(M_0,\in,T_0,T^*_0,(P^0)_0)$ is  a countable aa-premouse with vocabulary $\tau_0$. 

\item[(4)] At successor stages we let 
$$\begin{array}{l}
(M_{\alpha+1},E_{\alpha+1},T_{\alpha+1},T^*_{\alpha+1},
(P^{\alpha+1})_{\alpha+1})=\ult(M_{\alpha},E_{\alpha},T_{\alpha},T^*_{\alpha},(P^\alpha)_\alpha).
\end{array}$$ 
 The mapping $j_{\alpha,\alpha+1}$ is the canonical elementary mapping of an aa-premouse into its aa-ultrapower. 

\item[(5)] At limit stages $(M_{\alpha},E_{\alpha},T_{\alpha},T^*_{\alpha},
(P^{\alpha})_{\alpha})$ is the direct limit of the directed system $$\langle(M_\beta,E_\beta,T_\beta,T^*_\beta,(P^{\beta})_\beta),j_{\gamma,\beta}:\gamma<\beta<\alpha, \gamma,\beta\rangle,$$ i.e. $(M_{\alpha},E_{\alpha},T_{\alpha},T^*_{\alpha},
(P^{\alpha})_{\eta})$ is the direct limit of 
$$\langle(M_\beta,E_\beta,T_\beta,T^*_\beta,(P^{\beta})_\eta),j_{\gamma,\beta}:\eta\le\gamma<\beta<\alpha, \gamma\rangle,$$ for $\eta<\alpha$.

\end{description}
\end{definition}

\begin{lemma}\label{69} Suppose $$\langle(M_\beta,E_\beta,T_\beta,T^*_\beta,
(P^\beta)_\beta),j_{\alpha,\beta}:\alpha<\beta\le\omega_1,\alpha\rangle$$ is as in Definition~\ref{system}. Let  $\delta\le\omega_1$, $\delta\in\Lim$. Suppose each of the models $(M_\beta,E_\beta,T_\beta,T^*_\beta,
(P^\beta)_\beta)$, $\beta<\delta$, is isomorphic to an aa-premouse.
Then, if well-founded, $(M_{\delta},E_{\delta},T_{\delta},T^*_{\delta},
(P^{\delta})_{\delta})$ collapses to an aa-premouse.
The canonical mappings $i_{\nu,{\delta}}$ are   elementary embeddings $$j_{\nu,{\delta}}:(M_\nu,\in,T_\nu,T_\nu^*,(P^\nu)_\nu)\to (M_{\delta},E_{\delta},T_{\delta},T^*_{\delta},
(P^{\delta})_{\delta})\rest\tau_\nu.$$

\end{lemma}

\begin{proof}
This is like the proof of Lemma~\ref{wf}.
\end{proof}

We now extend the important Lemma~\ref{wff} from single aa-ultrapowers to the context of iterated aa-ultrapowers:

\begin{lemma}\label{69a} Let $N=(J'_\zeta,\in,{\tr}\rest\zeta)$, where $\zeta$ is any limit ordinal.  Let $$\la(M_\beta,E_{\beta},T_\beta,T^*_\beta,(P^\beta)_\beta),j_{\beta,\gamma}:\beta\le\gamma\le\omega_1\ra,$$  be an aa-iteration. Let  $\delta$ be a limit ordinal $\le\omega_1$.
 Suppose for all $\beta<{\delta}$ there is a weak elementary $$\sigma_\beta:(M_\beta,E_{\beta},T_\beta,T^*_\beta,(P^\beta)_\beta)\to N_{\beta},$$ where    $N_{\beta}$ is an expansion of  $N$, aa-like w.r.t.
 $\bigcup_{\gamma<\beta}\ran(\sigma_\gamma)$, to a $\tau_\beta$-structure   such that $N_{\beta}=N_\gamma\restriction\tau_{{\beta}}$ whenever $\beta<\gamma\le\delta$. 
Then there is an expansion $N_{\delta}$ of $N$ to a $\tau_{\omega\delta}$-structure and an elementary $$\sigma_{\delta}:(M_{\delta},E_{\delta},T_{\delta},T^*_{\delta},(P^{\delta})_{\delta})\to N_{\delta}$$ such that $N_\beta=N_{\delta}\restriction\tau_{\beta}$ and $\sigma_\beta(x)=\sigma_{\delta}(j_{\beta,{\delta}}(x))$ for all $x\in M_\beta$ and all $\beta\in{\delta}$. Moreover, $N_\delta$ is aa-like w.r.t. $\ran(\sigma_{\delta})$.

\end{lemma}

\begin{proof}
The condition $N_\beta=N_{\omega\delta}\restriction\tau_{\beta}$ for $\beta\in{\omega\delta}$ determines a unique $\tau_{\omega\delta}$-structure $N_\delta$ apart from the interpretation of $P^{\delta}_{\delta}$. We let the interpretation of $\rP^{\delta}_{\delta}$ in $N_{\delta}$ to be the union of the interpretations of $\rP^{\delta}_{\beta}$, $\beta<\delta$, in $N_{\delta}$. For defining $\sigma_{\delta}$, let $a\in M_{\delta}$. There is $\beta<\delta$ such that 
$a=j_{\beta,\delta}(b)$ for some $b\in M_\beta$. We let $\sigma_{\delta}(a)=\sigma_{\beta}(b)$. Basic properties of directed limits guarantee that this is a coherent definition of a function and that the mapping $\sigma_{\delta}$ is an elementary embedding.
\end{proof}

By combining Lemma~\ref{wff} and Lemma~\ref{69a} we can be sure that all structures in the directed system of Definition~\ref{system} are well-founded and by Lemma~\ref{wf} collapse to aa-premice.


%

\begin{definition} We call the aa-premice $(M_\beta,E_\beta,T_\beta,T^*_\beta,(P^\beta)_\beta)$ \emph{{iterates}} of the aa-premouse $(M_0,E_0,T_0,T^*_0,(P)_0)$. An aa-premouse $(M_0,E_0,T_0,T^*_0,(P)_0)$ is an \emph{{\bf aa-mouse}} if  its  $\beta$'th iterate $(M_\beta,T_\beta,T^*_\beta,(P^\beta)_\beta)$ is well-founded for
all $\beta<\omega_1$. In this case  we say that the aa-premouse  $(M_0,E_0,T_0,T^*_0,(P)_0)$ is  \emph{{iterable}}. 
\end{definition}

Note that if the iterates $(M_\alpha,T_\alpha,T^*_\alpha,(P^\alpha)_\alpha)$, $\alpha<\omega_1$, are all well-founded, then also the iterate $(M_{\omega_1},T_{\omega_1},T^*_{\omega_1},(P^{\omega_1})_{\omega_1})$  is well-founded.

\begin{lemma}Suppose $M_0$ is countable and
$$(M_0,E_0,T_0,T^*_0,(P)_0)\preccurlyeq(J'_\alpha,\in,{\tr}\rest\alpha,{{\tr}}_\alpha,(P')_0).$$ Then each iterate $(M_\beta,E_\beta,T_\beta,T^*_\beta,(P^\beta)_\beta)$, $\beta\le\omega_1$, in the aa-iteration starting from $(M_0,E_0,T_0,T^*_0,(P)_0)$ is (isomorphic to) an aa-mouse.
\end{lemma}

\begin{proof}
We may use Lemmas~\ref{wff} and \ref{69a} inductively to build
$$\pi_\beta:(M_\beta,E_\beta,T_\beta,T^*_\beta,(P^\beta)_\beta)\to N_\beta$$
for all $\beta\le\omega_1$, where each $N_\beta$ is an aa-like w.r.t.
 $\bigcup_{\gamma<\beta}\ran(\sigma_\gamma)$ expansion of $(J'_\alpha,\in,{\tr}\rest\alpha)$, with the consequence that each $(M_\beta,E_\beta)$ is well-founded.
\end{proof}

\begin{lemma}\label{club} Let $\la(M_\beta,E_\beta,T_\beta,T^*_\beta,(P^\beta)_\beta),j_{\beta,\gamma}:\beta\le\gamma\le\omega_1\ra$ be an aa-iteration. 
Then the set  $C=\{(\rP_\alpha)^{M_{\omega_1}}: \alpha\in\omega_1\}$ is a club in $\Pw_{\omega_1}(M_{\omega_1})$.
\end{lemma}

\begin{proof}By Lemmas \ref{countable} and \ref{preserved}, and since we take direct limits at limit stages,  the sequence  $(\rP_\alpha)^{M_{\omega_1}}$, $\alpha<\omega_1$, is continuously increasing. By Lemma~\ref{proper} it is properly increasing. Suppose now $s$ is a  countable subset of $M_{\omega_1}$. There are $\alpha<\omega_1$ and a countable $s^*\subseteq M_\alpha$ such that $s=j_{\alpha\omega_1}[s^*]$. Hence   $s\subseteq (\rP_{\alpha+1})^{M_{\omega_1}}$.
\end{proof}

We can now prove that the final model $ (M_{\omega_1},E_{\omega_1},T_{\omega_1},T_{\omega_1}^*,(P^{\omega_1})_{\omega_1})$ of an iteration of aa-premice actually satisfies in the usual sense everything that the theory $T^*_{\omega_1}$ predicts:

\begin{proposition}\label{rre}
 Let $\la(M_\beta,E_\beta,T_\beta,T^*_\beta,(P^\beta)_\beta),j_{\beta,\gamma}:\beta\le\gamma\le\omega_1\ra$ be an aa-iteration of aa-mice.  Then for all formulas $\phi(\ba)$ of stationary logic in vocabulary $\tau^-_{\omega_1}$ and all $\ba\in M_{\omega_1}$:
$$\phi({\ba})\in T^*_{\omega_1}\iff (M_{\omega_1},E_{\omega_1},T_{\omega_1},(P^{\omega_1})_{\omega_1})\models \phi({\ba}).$$
\end{proposition}

\begin{proof}  
We prove the claim by induction on $\phi(\vec{x})$.  Let $\beta$ be the least $\beta$ such that 
$\ba=j_{\beta,\omega_1}(\ba^*)$ for some ${\ba^*}\subseteq M_\beta$. 
\medskip

\noindent 1. Atomic $\phi(\ba)$. If $\phi({\ba})\in T^*_{\omega_1}$, then $\phi({\ba^*})\in T^*_{\beta}$, whence $(M_{\beta},E_{\beta},T_{\beta},(P^{\beta})_{\beta})\models\phi({\ba^*})$ and $(M_{\omega_1},E_{\omega_1},T_{\omega_1},(P^{\omega_1})_{\omega_1})\models \phi({\ba})$ follows because $j_{\beta,\omega_1}$ is weakly elementary.
The converse follows from the completeness of $T^*_{\beta}$.

\noindent 2. Conjunction and negation: Trivial.

\noindent   3. Existential quantifier: Suppose $\exists x\phi(x,{\ba})\in T_{\omega_1}^*$ i.e. $\exists x\phi(x,{\ba^*})\in T^*_{\beta}$. Then by Definition~\ref{prem} condition (4) there is $b\in M_\beta$ such that 
 $\phi(b,{\ba^*})\in T^*_{\beta},$ whence  $\phi(j_{\beta,\omega_1}(b),{\ba})\in T^*_{\omega_1}.$
 By the Induction Hypothesis
 $(M_{\omega_1},E_{\omega_1},T_{\omega_1},(P^{\omega_1})_{\omega_1})\models\phi(j_{\beta,\omega_1}(b),{\ba}).$
 Hence we have $(M_{\omega_1},E_{\omega_1},T_{\omega_1},(P^{\omega_1})_{\omega_1})\models\exists x\phi(x,{\ba}).$ Conversely, if  $(M_{\omega_1},E_{\omega_1},T_{\omega_1},(P^{\omega_1})_{\omega_1})\models\exists x\phi(x,{\ba})$, then there is $\gamma\ge\beta$ and $b\in M_\gamma$ such that 
 $(M_{\gamma},E_{\gamma},T_{\gamma},(P^{\gamma})_{\gamma})\models\phi(b,
 j_{\beta,\gamma}(\ba^*))$. By Induction Hypothesis and the completeness of $T^*_{\omega_1}$ we have 
 $\exists x\phi(x,{\ba})\in T^*_{\omega_1}$.\medskip

\noindent 4. $\aaqs\phi(s,{\ba})$:  W.l.o.g. $\phi(s,{\ba})$ is in vocabulary $\tau^-_{\beta}$. Suppose  first the sentence $\aaqs\phi(s,{\ba})$ is in $T^*_{\omega_1}$. By weak elementarity of the mapping $j_{\beta,\omega_1}$, we have $\aaqs\phi({\ba^*})\in T^*_{\beta}$, and, moreover,  $\aaqs\phi(s,j_{\beta,\gamma}(\ba^*))\in T^*_{\gamma}$ for $\beta\le\gamma<\omega_1$.
Since the successor stages of the aa-iteration are aa-ultraproducts, $\phi(\rP_\gamma,{j_{\beta,\gamma+1}(\ba^*)})\in T^*_{\gamma+1}$  for $\beta\le\gamma<\omega_1$.  By Induction Hypothesis,  $$(M_{\omega_1},E_{\omega_1},T_{\omega_1},(P^{\omega_1})_{\omega_1})\models \phi(\rP_{\gamma},{\ba})$$ whenever
 $\beta\le\gamma<\omega_1$. By Lemma~\ref{club},  $$(M_{\omega_1},E_{\omega_1},T_{\omega_1},(P^{\omega_1})_{\omega_1})\models \aaqs\phi(s,{\ba}).$$
Conversely, if  $\aaqs\phi(s,{\ba})\notin T^*_{\beta}$, then $\aaqs\neg\phi(s,{\ba^*})\in T^*_{\beta}$ and we can argue as above.

%
%
\end{proof}

\subsubsection{The Continuum Hypothesis in $\CAA$}

We use the method of iterating the aa-ultrapower construction $\omega_1$ times to prove the Continuum Hypothesis and $\Diamond$ in $\CAA$. The proof is reminiscent of  Silver's proof of $\GCH$ in $L^\mu$ \cite{MR0278937}. 

 To this end, let $\la((M_\beta,\in,T_\beta,T^*_\beta,(P^\beta)_\beta),j_{\beta,\gamma}):\beta\le\gamma\le\omega_1\ra$ be as in Definition~\ref{system}.
%
%

\begin{lemma}\label{nonew}Suppose 
$$(M_0,\in,T_0,T^*_0,(P)_0)\prec (J'_{\omega\alpha},\in,{\tr}\rest\omega\alpha,{\tr}_{\omega\alpha},(P')_0),$$ where $\alpha$ is a limit ordinal and  $M_0$ is countable. Then
$M_{\omega_1}$ {does not have new reals} over those in $M_0$.
\end{lemma}
\begin{proof} Suppose $r$ is a real in $M_{\omega_1}$ and not of the form $j_{0,\omega_1}(r^*)$ for any real $r^*\in M_0$. Let $\xi<\omega_1$ such that $r=j_{\xi+1,\omega_1}(r^*)$ for some $r^*\in M_{\xi+1}$ and no such $r^*$ exists in $M_\xi$. Then $r^*=[\phi(s,x,\ba)]$ for some $\phi(s,x,\by)\in\LL(\aaq)$ in the vocabulary $\tau^-_{\xi+1}$ and some $\ba\in M_\xi$ such that $\aaqs\exists x\phi(s,x,\ba)\in T^*_\xi$. 
In particular, $M_{\omega_1}\models \aaqs\exists x\phi(s,x,j_{\xi,\omega_1}(\ba))$. 
Since $M_{\omega_1}\models ``[\phi(s,x,j_{\xi,\omega_1}(\ba))]\subseteq\omega"$, the sentence
$\exists x(x\subseteq\omega\wedge\forall n(n\in[\phi(s,x,j_{\xi,\omega_1}(\ba))]\leftrightarrow n\in x))$ is in $T^*_{\omega_1}$, whence $\aaqs \exists x(x\subseteq\omega\wedge\forall n
(n\in f_{\phi(s,x,\ba)}(s)\leftrightarrow n\in x))$ is in $T^*_{\xi}$ and therefore
 $\aaqs \exists x(x\subseteq\omega\wedge\forall n
(n\in f_{\phi(s,x,\sigma_\xi(\ba))}(s)\leftrightarrow n\in x))$ is true in $J'_{\omega\alpha}$,
where $\sigma_\xi$ is as in Lemma~\ref{69a}. 
So there is a club of sets $s$ such that $J'_{\omega\alpha}\models\exists x(x\subseteq\omega\wedge\forall n(n\in f_{\phi(s,x,\sigma_\xi(\ba))}(s)\leftrightarrow n\in x))$. 
Since $J'_{\omega\alpha}$ has only countably many reals (a consequence of Club Determinacy, see Theorem~\ref{tepois}), this club  is divided into countably many parts according to the $x\subseteq\omega$ such that $J'_{\omega\alpha}\models\forall n(n\in f_{\phi(s,x,\sigma_\xi(\ba))}(s)\leftrightarrow n\in x)$. One of those parts is stationary and therefore, by Club Determinacy,  contains a club. Hence 
$\exists x\aaqs (x\subseteq\omega\wedge\forall n(n\in f_{\phi(s,x,\ba)}(s)\leftrightarrow n\in x))$ is in $T_{\xi}^*$. Since $(M_\xi,\in,T_\xi,T_\xi^*)$ is an aa-mouse, there is $b\in M_\xi$ such that 
$\aaqs (b\subseteq\omega\wedge\forall n(n\in f_{\phi(s,x,\ba)}(s)\leftrightarrow n\in b))$ is in $T_{\xi}^*$. Hence $\aaqs(j_{\xi,\omega_1}(b)\subseteq\omega\wedge\forall n(n\in f_{\phi(s,x,\sigma_\xi(\ba))}(s)\leftrightarrow n\in j_{\xi,\omega_1}(b)))$ is true in $J'_{\omega\alpha}$, and therefore $r=j_{\xi,\omega_1}(b)$, 
a contradiction.%
\end{proof}

Let $\la\pi_\alpha:\alpha\le\omega_1,\alpha\ra$ be collapse functions such that  
\begin{enumerate}
\item $\pi_0=id:M_0=(J^T_\alpha,\in,T,T^*,(P)_0)=N_0=(J^{S_0}_{\zeta_0},\in,S_0,S^*_0,(\bP)_0)$

\item $\pi_{\alpha+1}:M_{\alpha+1}=\ult(M_\alpha)\cong N_{\alpha+1}=(J^{S_{\alpha+1}}_{\zeta_{\alpha+1}},\in,S_{\alpha+1},S_{\alpha+1}^*,(\bP^{\alpha+1})_{\alpha+1})$ 
\item $\pi_{\nu}: M_{\nu}\cong N_{\nu}=(J^{S_{\nu}}_{\zeta_{\nu}},\in,S_{{\nu}},S^*_{{\nu}},(\bP^{\nu})_{\nu}) \mbox{, limit $\nu$}.$

\end{enumerate}

Let $i_{\alpha,\beta}:N_\alpha\to N_\beta$ be defined by $i_{\alpha,\beta}(\pi_\alpha(a))=\pi_\beta(j_{\alpha,\beta}(a))$. We get the commuting diagram of Figure~\ref{iteration}. {
\def\do{\big\downarrow}
\def\ho{\hspace{12pt}}
\renewcommand{\arraystretch}{0.5}
{\scriptsize
\setlength{\arraycolsep}{2pt}
\begin{figure}
$$\begin{array}{cccccccccccc}
      &j_{01} &      &j_{12}&&& & &j_{\xi\xi+1}&&\\
\ho M_0&\longrightarrow     &\ho M_1&\longrightarrow  &\ho M_2&\ldots& \longrightarrow&\ho  M_\xi&\longrightarrow&\ho M_{\xi+1}&\ldots
&M_{\omega_1}\\
\\
\pi_0\do &        &\pi_1\do  &  &\pi_2\do && & \pi_\xi\do &&\hspace{-7mm}\pi_{\xi+1}\do &&\hspace{-10mm}\pi_{\omega_1}\do\\
\\
\ho N_0&\longrightarrow     &\ho N_1&\longrightarrow  &\ho N_2&\ldots& \longrightarrow&\ho  N_\xi&\longrightarrow&\ho  N_{\xi+1}&\ldots&N_{\omega_1}\\
      &i_{01} &      &i_{12}&&& & &i_{\xi\xi+1}&&&\\

\end{array}$$
\caption{The iteration.\label{iteration}}
\end{figure}
}

Suppose $\beta\in \On^{M_\gamma}$. Let $(J''_\beta)^{M_\gamma}$ the variant we obtain from $J'_\beta$ when we use $T_{\gamma}$ in place of $\tr$ in Definition~\ref{definJ}. 
 Recall that $M_\gamma$ is well-founded, so $\beta$ is well-founded but may not be a real ordinal. Respectively,  $(J''_\beta)^{N_\gamma}$.

\begin{lemma}\label{2ww2}Suppose $\beta\in N_{\omega_1}$.
\begin{enumerate} 
\item $\tr\rest\beta=\pi_{\omega_1}(T_{\omega_1})\rest \beta$.
\item $J'_\beta=(J''_\beta)^{N_{\omega_1}}$.
\end{enumerate}
\end{lemma}

\begin{proof}
Both claims are proved by simultaneous induction on $\beta$. Suppose the claims holds for $\beta=\pi_{\omega_1}(\bab)$, $\bab\in M_{\omega_1}$. Thus, $J'_\beta=(J''_\beta)^{N_{\omega_1}}$ and $\tr\rest\beta=\pi_{\omega_1}(T_{\omega_1})\rest\beta$.
By definition, 
$$
\begin{array}{lcl}
J'_{\beta+\omega}&=&\rud_{\tr}(J'_\beta\cup\{J'_\beta\})\\
(J''_{\beta+\omega})^{N_{\omega_1}}&=&\rud_{\pi_{\omega_1}(T_{\omega_1})}(J'_\beta\cup \{J'_\beta\}).\\
\end{array}
$$
We prove:
 $$\tr\rest \beta+\omega=\pi_{\omega_1}(T_{\omega_1})\rest \beta+\omega.$$
Suppose to this end, $(\beta,\phi(\ba))\in \pi_{\omega_1}(T_{\omega_1})$. Let $\bar{\beta}$ be such that $\beta=\pi_{\omega_1}(\bar{\beta})$  and $\bar{a}$ such that $\ba=\pi_{\omega_1}(\vec{\bar{a}})$.  Let $\gamma<\omega_1$ be such that  $\bar{\beta}=j_{\gamma\omega_1}(\beta^*)$, 
 and $\vec{\bar{a}}=j_{\gamma\omega_1}(\vec{a^*})$.  Thus $(\beta^*,\phi(\vec{a^*}))\in T_\gamma$. It follows that $\phi^{(J'_{\beta^*})}(\vec{a^*})\in T^*_{\gamma}$, whence
$\phi^{(J'_{\bar{\beta}})}(\vec{\bar{a}})\in T^*_{\omega_1}.$ By Theorem~\ref{rre}, $$(M_{\omega_1},E_{\omega_1},T_{\omega_1}, (P^{\omega_1})_{\omega_1})\models\phi^{(J'_{\bar{\beta}})}(\vec{\bar{a}})$$ and therefore
$(N_{\omega_1},\in,\pi_{\omega_1}(T_{\omega_1}))\models\phi^{(J'_\beta)}(\vec{a})$. Since $J'_\beta=(J''_\beta)^{N_{\omega_1}}$ and $\tr\rest\beta=\pi_{\omega_1}(T_{\omega_1})\rest\beta$, we obtain
$$(J'_\beta,\in,\tr\rest \beta)\models\phi(\vec{a})$$
i.e. $(\beta,\phi(\ba))\in\tr$. The other direction is similar. 
\end{proof}


We are now ready to prove the main result of this section. Since we assume Club Determinacy, there are only countably many reals in $\CAA$, but we show that there are, in the sense of $\CAA$, only $\aleph_1^{\CAA}$ many. Let $\omega_1^{aa}$ denote the $\omega_1$ of $\CAA$. The ordinal $\aleph_1^{aa}$ is in our case a countable ordinal in the sense of $V$.

{
\def\do{\big\downarrow}
\def\ho{\vspace{4pt}}
\setlength{\arraycolsep}{3pt}
\renewcommand{\arraystretch}{0.5}
\begin{figure}
$$\begin{array}{ccccc}
&&j_{\gamma\omega_1}&&\\
&(M_\gamma, E_\gamma, T_\gamma,T^*_\gamma)&\xrightarrow{\hspace*{1.5cm}}&(M_{\omega_1},E_{\omega_1},T_{\omega_1},T^*_{\omega_1})&\ho\\
&\rotatebox{90}{$\subset$}&&\rotatebox{90}{$\subset$}&\ho\\
&(J'_{{\beta^*}})^{M_\gamma}&\xrightarrow{\hspace*{1.5cm}}&(J'_{\bar{\beta}})^{M_{\omega_1}}&\ho\\
\\
&\pi_\gamma\do\hspace{7mm} &&\do\pi_{\omega_1}&\ho\\
\\
&({J'_{\beta}})^{N_\gamma}&\xrightarrow{\hspace*{1.5cm}}&({J'_{\beta}})^{N_{\omega_1}}&\\
&\rotatebox{270}{$\subset$}&&\rotatebox{270}{$\subset$}&\ho\\
&(N_\gamma,\in,\pi_\gamma(T_\gamma),\pi_\gamma(T^*_\gamma))&\xrightarrow{\hspace*{1.5cm}}&(N_{\omega_1},\in,\pi_{\omega_1}(T_{\omega_1}),\pi_{\omega_1}(T^*_{\omega_1}))&\ho\\
&&i_{\gamma\omega_1}&&\\
\end{array}$$
\caption{The levels.\label{levels}}
\end{figure}
}

\begin{theorem}\label{CHH}
 $\CH$ holds in $\CAA$.
\end{theorem} 

\begin{proof} 
Suppose $J'_{\alpha}$ is a stage where a new real $r$ of $\CAA$ is constructed, i.e.
\begin{equation}\label{raja}
r\in J'_{\alpha+\omega}\setminus J'_{\alpha},
\end{equation} and $\alpha$ is uniquely determined from $r$ by this equation. We show that $J'_{\alpha}\cap 2^\omega$ is countable in $\CAA$. It follows that $\CAA\cap 2^\omega$ has cardinality $\aleph_1$ in $\CAA$. Hence $\CAA\models \CH$.  

We can collapse $|\alpha|$ to $\aleph_1$ without changing $\CAA$ (Proposition~\ref{sigmaclosed}) or $\CAA\cap 2^\omega$. Also Club Determinacy is preserved in this forcing, because the forcing is countably closed. Therefore we can assume, w.l.o.g., that $|\alpha|=\aleph_1^V$. 

Let  $(M,\in,T,T^*)\in \CAA$ be {countable in $\CAA$} such that
\begin{equation}\label{prec}
\{r,\alpha,J'_\alpha,J'_{\alpha+\omega}\}\subseteq(M,{\in},T,T^*)\preccurlyeq (J'_{\aleph_2^V},\in,{\tr}\rest{\aleph_2^V},{\tr}_{\aleph_2^V}).
\end{equation}  
Let us use, as above,  $J''_{\beta}$ for $\beta\in M$ to denote $J'_{\beta}$ constructed using $T_{\omega_1}$ instead of $\tr$. 
So now by (\ref{prec}),
\begin{equation}\label{rajas}
r\in J''_{\alpha+\omega}\setminus J''_{\alpha}.
\end{equation}
The idea of the rest of the proof is the following. We iterate $(M,\in,T,T^*,(P)_0)$, $P=\emptyset$, inside $\CAA$ until we obtain $(M_{\omega_1},\in,T_{\omega_1},T_{\omega_1}^*,(P^{\omega_1})_{\omega_1})$. 
We have shown in Lemma~\ref{2ww2} that  $J''_{\alpha}=J'_{\alpha},$ whence $J'_{\alpha}\cap 2^\omega\subseteq M_{\omega_1}.$ Lemma~\ref{nonew} implies 
$M_{\omega_1}\cap 2^\omega\subseteq M.$ It will follow that 
$J'_{\alpha}\cap 2^\omega$ is countable, as we wished to demonstrate.
By Lemma~\ref{nonew},  no new reals are generated in the  iteration.
By Lemma~\ref{2ww2}
$J'_{\beta}=(J''_\beta)^{N_{\omega_1}}$.
Now:
{\renewcommand{\arraystretch}{1.5}
$$\begin{array}{lcl}
r&=&i_{0\omega_1}(\pi_0(r))\\
&\in& i_{0\omega_1}(\pi_0(J''_{\alpha+\omega}))\setminus i_{0\omega_1}(\pi_0(J''_{\alpha}))\\
&=&
(J'_{i_{0\omega_1}(\pi_0(\alpha))+\omega})^{N_{\omega_1}}\setminus (J'_{i_{0\omega_1}(\pi_0(\alpha))})^{N_{\omega_1}}\\
&=&
J'_{i_{0\omega_1}(\pi_0(\alpha))+\omega}\setminus J'_{i_{0\omega_1}(\pi_0(\alpha))}.
\end{array}$$} %
By (\ref{raja}), $i_{0\omega_1}(\pi_0(\alpha))=\alpha\in N_{\omega_1}$ and further by Lemma~\ref{2ww2}, $i_{0\omega_1}(\pi_0(J'_\alpha))=J'_\alpha$.
Thus all the reals of $J'_\alpha$ are in $N_{\omega_1}$ and hence in $M$, and therefore they are countably many only.
\end{proof}

The above proof shows that $\CAA\models 2^{\aleph_\alpha}=\aleph_{\alpha+1}$ for all $\alpha\le \omega_1 (=\omega_1^V)$. For $\alpha<\omega_1$ the above proof works, and for $\alpha=\omega_1$ the claim therefore follows from the fact (Theorem~\ref{tepois}) that $\omega_1$ is  measurable in $\CAA$\footnote{Work 
in progress by a SQuaRE group  shows that if Club Determinacy holds, $\CAA$ satisfies full ${\GCH}$ and  has no inner model with a Woodin cardinal.}\setcounter{footnoteValueSaver}{\value{footnote}}.

\begin{theorem}\label{delta}
There is a  $\Delta^1_3$ well-ordering of the reals in $\CAA$.
\end{theorem}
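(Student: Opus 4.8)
The plan is to read off the well-ordering from the canonical well-order $\prec$ of the $C(\aaq)$-hierarchy, using countable iterable aa-mice (coded by reals) to capture the reals of $C(\aaq)$, exactly as in the proof of Theorem~\ref{CHH}. Fix a recursive set $\Sigma_0$ of $\LL(\aaq)$-sentences, in the vocabulary $\{\in,\prec\}$ with constants, asserting ``I am a limit level $L'_\gamma$ of the $C(\aaq)$-construction and $\prec$ is its canonical well-order''. Call $(M,T)$ a \emph{good aa-mouse} if it is a countable club determined aa-premouse (Definition~\ref{prem}) with $\Sigma_0\subseteq T$ that is iterable, i.e.\ all its iterates $(M_\alpha,T_\alpha)$, $\alpha<\omega_1$, are well-founded; it \emph{captures} a real $r$ if $r\in M$, $T$ says $r\subseteq\omega$, and the element $r$ of $M$ decodes to the genuine real $r$. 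The idea is that a good aa-mouse capturing $r,r'$ knows the true $\prec$-order of $r$ and $r'$, and that this order can be certified either existentially or universally.

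The first task is complexity bookkeeping on a real code $x$ of $(M,T)$. The premouse conditions 1--5, club determinacy of $(M,T)$, the inclusion $\Sigma_0\subseteq T$ (a recursive set), and ``$(M,T)$ captures $r$'' are all arithmetic in $x$ (and $r$). A single aa-ultrapower $(M^*,T^*)$ is defined recursively from $(M,T)$, so the iterate of length $\alpha$ along any real coding the ordinal $\alpha$ is uniformly Borel in $x$ and that code, and depends only on $\alpha$ since the iteration is canonical. Hence iterability is $\Pi^1_2$: its negation asserts the existence of a real coding a countable ordinal $\alpha$ together with an infinite $\in$-descending sequence through the iterate of length $\alpha$, a matrix of the form $\Pi^1_1\wedge\Sigma^1_1$ prefixed by an existential real quantifier, i.e.\ $\Sigma^1_2$. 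Thus ``$x$ codes a good aa-mouse'' is $\Pi^1_2$.

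The second task is correctness. Given an iterable good aa-mouse $(M,T)$, note first that $M_{\omega_1}$ is well-founded: a putative $\in$-descending chain in the direct limit lies in the range of some $i_{\beta\omega_1}$ with $\beta=\sup_n\alpha_n<\omega_1$ (the $\alpha_n<\omega_1$ being stages producing its members), and pulls back along the $\in$-preserving map $i_{\beta\omega_1}$ to a descending chain in the well-founded $M_\beta$, a contradiction. Collapsing $M_{\omega_1}$ to $N_{\omega_1}$, Lemma~\ref{rre} makes $N_{\omega_1}$ correct about $\aaq$, and since $\Sigma_0\subseteq T_{\omega_1}$ the internal $C(\aaq)$-hierarchy of $N_{\omega_1}$ coincides with the true one up to $N_{\omega_1}\cap\On$, as in (\ref{samee}). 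Consequently, for any reals $r,r'$ captured by $(M,T)$, the internal $\prec$-order of $r$ and $r'$ agrees with the true $C(\aaq)$-order. Now set
\[
\Phi(r,r'):\ \exists x\,\big[x\ \text{codes a good aa-mouse capturing}\ r,r'\ \text{with}\ r\prec r'\big],
\]
\[
\Psi(r,r'):\ \forall x\,\big[x\ \text{codes a good aa-mouse capturing}\ r,r'\ \to\ r\prec r'\big],
\]
so that $\Phi$ is $\Sigma^1_3$ and $\Psi$ is $\Pi^1_3$. If $r\ne r'$ lie in $C(\aaq)\cap\oR$, a witness for $\Phi$ exists: take a countable $\LL(\aaq)$-elementary submodel $(M,T)\preccurlyeq(L'_{\aleph_2^V},\Th_{\LL(\aaq)}(L'_{\aleph_2^V}))$ containing $r,r'$, which is an iterable club determined good aa-mouse by Example~\ref{ex2} and the subsequent lemma (using $\cof(\aleph_2^V)>\omega$) and reflects $r\prec r'$; and by correctness every such code orders $r,r'$ by the true $C(\aaq)$-order. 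Hence on $C(\aaq)\cap\oR$ the formulas $\Phi$ and $\Psi$ both define $\prec$ restricted to reals, which is a well-order; since they are $\Sigma^1_3$ and $\Pi^1_3$ respectively and agree there, this is the desired $\Delta^1_3$ well-ordering.

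The main obstacle is the conjunction of the two technical points underlying Steps one and two: verifying that one aa-ultrapower step and the whole transfinite iteration are genuinely uniform (Borel in the ordinal code), so that iterability lands in $\Pi^1_2$; and verifying that the soundness clause $\Sigma_0\subseteq T$ forces an \emph{arbitrary} iterable good aa-mouse---not merely one already elementary in $C(\aaq)$---to compute the true construction order, so that the universal formula $\Psi$ coincides with the existential $\Phi$ on $C(\aaq)$-reals. The second is the genuinely delicate coherence point, since it requires pushing the derivation of (\ref{samee}) from the special mice of the CH proof to all good aa-mice, using only completeness of $T$, membership of $\Sigma_0$, and $\aaq$-correctness of the collapsed $\omega_1$-iterate.
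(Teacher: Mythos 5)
Your proposal follows essentially the same route as the paper's own (very terse) proof: code countable iterable aa-mice by reals, observe that iterability is $\Pi^1_2$, and use the correctness of the $\omega_1$'st iterate (Lemma~\ref{rre} and equation~(\ref{samee})) to show that any such mouse containing $r,r'$ computes the true $\prec$-order, yielding matching $\Sigma^1_3$ and $\Pi^1_3$ definitions. Your write-up merely fills in the bookkeeping (the soundness clause $\Sigma_0\subseteq T$ and the complexity of the iteration) that the paper leaves implicit in its appeal to the proof of Theorem~\ref{CHH}, and it does so correctly.
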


\begin{proof} 
We show that the canonical well-order $\prec$ of $\CAA$ is $\Delta^1_3$. The proof of Theorem~\ref{CHH} essentially shows that for any reals $x,y$ in $\CAA$:
$$x\prec y\iff \exists z\subseteq \omega(\mbox{ $z$ codes an  aa-mouse $M$ such that }$$
$$x,y\in M\mbox{ and }M\models ``x\prec y").$$
Being a real that codes a countable aa-mouse is $\Pi^1_2$. Hence the right hand side of the equivalence is $\Sigma^1_3$ and the claim follows.\end{proof}

\begin{corollary}\label{nowoodin}
There are no Woodin cardinals in $\CAA$\footnotemark[\value{footnoteValueSaver}].
\end{corollary}

\begin{proof}
The proof of Theorem~\ref{delta}  shows that, assuming Club Determinacy, there is a $\Delta^1_3$-well-ordering of the reals, this well-ordering is in $\CAA$ and $\Delta^1_3$ in $\CAA$. Suppose there is a Woodin cardinal in $\CAA$. There would be a measurable cardinal above it by Theorem~\ref{tepois}. A measurable cardinal above a Woodin cardinal implies $\Sigma^1_2$-determinacy (\cite{MR959109}). On the other hand, $\Sigma^1_2$-determinacy implies that $\Sigma^1_3$-sets of reals are Lebesgue measurable which contradicts the existence of a $\Sigma^1_3$-well-ordering of the reals.   
\end{proof}

\begin{theorem}
$\bDiamond$ holds in $\CAA$.
\end{theorem}

\begin{proof}
We define $S_\alpha$ for $\alpha<\omega_1^{aa}$ as follows: Let $(C,X)$ be the $\prec_{\CAA}$-minimal pair $(C,X)$, where $C\subseteq\alpha$ is a club and $X\cap \beta\ne S_\beta$ for $\beta\in C$.  We then let $S_\alpha=X$. Suppose the set $\mathcal{S}=\langle S_\alpha:\alpha<\omega_1^{aa}\rangle$ thus built is not a $\bDiamond$-sequence in $\CAA$. Then there are $X\subseteq\omega_1^{aa}$  and a club $C\in \CAA$   such that $C\subseteq\omega_1^{aa}$ and $\beta\in C$ implies $X\cap\beta\ne S_\beta$. Let $\delta$ be minimal such that such a pair can be found in $J'_\delta$. W.l.o.g., $\delta<\aleph_2^V$. 
Let  $(M,T)\in \CAA$ be countable (in $\CAA$) such that 
\begin{equation}\label{precc}
\{\mathcal{S},\delta,J'_\delta,{\tr}\rest{\delta},{\tr}_{\delta},\omega_1^{aa},C,X\}\subseteq(M,\in,T,T^*)\preceq (J'_{\aleph_2^V},\in,{\tr}\rest{\aleph_2},{\tr}_{\aleph_2}).
\end{equation}  
 We build models $M_\xi$ and $N_\xi$ as well as elementary mappings $i_{\alpha\beta},j_{\alpha\beta}$ and isomorphisms $\pi_\alpha$ for $\alpha<\beta\in N_{\omega_1}$ with $M_0=M$ as in the proof of Theorem~\ref{CHH}, see Figure~\ref{iteration}. Let $\alpha=M\cap\omega_1^{aa}$, $\bar{C}=C\cap\alpha$ and $\bar{X}=X\cap\alpha$. Clearly $\alpha\in C$, as $C$ is club. Let $\delta^*=i_{0\omega_1}(\pi_0(\delta))$. The ordinal $\delta^*$ is the minimal $\delta^*$ such that there is a counterexample such as $(\bar{C},\bar{X})$ in $J'_{\delta^*}$ in $N_{\omega_1}$. The ordinal $\alpha$ is below the critical point of $i_{01}$, whence $$i_{0\omega_1}(\langle S_\beta:\beta<\alpha\rangle)=\langle S_\beta:\beta<\alpha\rangle.$$ Therefore, according to our definition, $S_\alpha=\bar{X}$, contradicting $\alpha\in C$.

\end{proof}

\section{Variants of stationary logic}

There are several variants of stationary logic. The earliest variant is based on the following quantifier   introduced in  \cite{MR0376334}, a predecessor of the quantifier $\aaq$:


\begin{definition}
$\mm\models Q^{St} xyz\phi(x,{\ba})\psi(y,z,{\ba})$ if and only if $(M_0,R_0)$, where
$M_0=\{b\in M : \mm\models\phi(b,{\ba})\}$ and 
$R_0=\{(b,c)\in M : \mm\models\psi(b,c,{\ba})\}$ is an $\aleph_1$-like linear order and the set $\I$ of initial segments of $(M_0,R_0)$  with an $R_0$-supremum in $M_0$ is stationary  in the set $\DD$  of all (countable) initial segments of $M_0$  in the following sense: If $\J\subseteq\DD$ is unbounded in $\DD$ (i.e. $\forall x\in\DD\exists y\in\J(x\subseteq y)$) and $\sigma$-closed in $\DD$ (i.e. if $x_0\subseteq x_1\subseteq\ldots $ in $\J$, then $\bigcup_nx_n\in\J$), then $\J\cap\I\ne\emptyset$.\end{definition}
\noindent The logic $\LL(Q^{St})$, a sublogic of $\LL(\aaq)$, is recursively axiomatizable and $\aleph_0$-compact \cite{MR0376334}. We call this logic {\em Shelah's stationary logic}, and denote $C(\LL(Q^{St}))$ by $C(\aam)$\footnote{It should be noted that there is no difference between $C(\aam)$ and $C_{\old}(\aam)$.}. For example, we can say in the logic $\LL(Q^{St})$ that a formula $\phi(x)$ defines a stationary (in $V$) subset of $\omega_1$ in a transitive model $M$ containing $\omega_1$ as an element as follows:
 
$$M\models \forall x(\phi(x)\to x\in\omega_1)\wedge Q^{St} xyz\phi(x)(\phi(y)\wedge\phi(z)\wedge y\in z).$$
Hence
$$C(\aam)\cap \F_{\omega_1}\in C(\aam),$$
where $\F_{\omega_1}$ is the club-filter on $\omega_1$,
and in fact the set $D=C(\aam)\cap \F_{\omega_1}$ suffices to characterise $C(\aam)$ completely:
$C(\aam)=L[D]$, as we shall prove in the next Lemma. In particular, $C(\aam)\subseteq \CAA$.

\begin{lemma}\label{mmqq}
$C(\aam)=L[D]$. 
\end{lemma}

\begin{proof}
We already know $L[D]\subseteq C(\aam)$. We prove the converse by induction on the structure of 
$C(\aam)$. This boils down to showing that we can recognize in $L[D]$ whether a subset $M_0$  of an $\aleph_1$-like linear order $R_0$, both $M_0$ and $R_0$  in $L[D]$, satisfy $Q^{St}$ in the sense that the set of initial segments of $R_0$ with supremum in $M_0$ is stationary in the set of all initial segments of $R_0$. The model   $L[D]$ knows a cofinal mapping $\pi$ from an ordinal $\alpha$ into the domain of $R_0$. Since $R_0$ is $\aleph_1$-like, $\alpha=\omega_1^V$. Now $L[D]$ can use $\pi$ and $D$ to decide whether $M_0$ and $R_0$ satisfy $Q^{St}$. \end{proof}

\begin{theorem}\label{werlkj}
 If there are two Woodin cardinals, then $D=C(\aam)\cap \F_{\omega_1}$ is an ultrafilter in $C(\aam)$. In particular, $C(\aam)\models \GCH$.
 \end{theorem}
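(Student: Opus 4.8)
The equality $C(\aam)=L[D]$ with $D=C(\aam)\cap\F_{\omega_1}$ is already in hand, so the substance of the statement is twofold: that $D$ is an ultrafilter in $C(\aam)$, and the resulting $GCH$. The plan is to reduce $GCH$ to the first claim. Granting that $D$ is an ultrafilter, I would next observe that $D$ is in fact a \emph{normal}, $\kappa$-complete ultrafilter in $C(\aam)$, where $\kappa=\omega_1^V$: since $\kappa$ is the first uncountable ordinal, every $\gamma<\kappa$ is countable in $V$, so any $C(\aam)$-sequence $\langle A_i:i<\gamma\rangle$ of $D$-sets has $\bigcap_i A_i$ containing a countable intersection of clubs, hence a club; and the diagonal intersection of a full $\omega_1$-sequence of clubs is again a club. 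Thus $\kappa$ is measurable in $C(\aam)$, exactly as in Theorem~\ref{tepois}, and $C(\aam)=L[D]$ is a model $L[\mu]$ for the normal measure $\mu=D$ on $\kappa$. Silver's proof of $GCH$ in $L[\mu]$ \cite{MR0278937} then gives $C(\aam)\models GCH$.

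The heart of the matter is therefore the dichotomy: every $X\in C(\aam)$ with $X\subseteq\omega_1$ either contains a club or has co-club complement. I would argue by contradiction, supposing both $X$ and $\omega_1\setminus X$ stationary, and choose $X$ together with a defining formula at the least level $\alpha$ of the $C(\aam)$-hierarchy witnessing a failure. Fix a Woodin cardinal $\delta>\alpha$ and force with the countable stationary tower $Q_{<\delta}$. Since both $X$ and $\omega_1\setminus X$ are stationary, each is (coded by) a condition in $Q_{<\delta}$, so I may take generics $G_1\ni X$ and $G_2\ni\omega_1\setminus X$ with associated embeddings $j_1:V\to M_1$ and $j_2:V\to M_2$, each of critical point $\omega_1^V$. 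By normality of the generic ultrapowers, $\omega_1^V\in j_1(X)$ while $\omega_1^V\notin j_2(X)$.

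It remains to see that whether $\omega_1^V\in j(X)$ is \emph{independent} of the choice of generic, which yields the contradiction. Here $j(X)$ is the set defined over $C(\aam)^{M}$ at level $j(\alpha)$ by the image of the defining formula, and the mechanism is the level-by-level comparison already used for $C(\aaq)$: by the analogue of Proposition~\ref{thesame} (using Proposition~\ref{late} to preserve stationarity into $V[G]$) the hierarchy $C(\aam)^{M}$ coincides with the $\delta$-variant $C(\aam_{\delta})$ computed in $V$, which does not depend on the generic. To make this meaningful I would first, using the lower Woodin cardinal and a measurable below it, arrange $\delta^1_2=\omega_2$, so that by the analogue of Lemma~\ref{omega2} the relevant parameters — the image of $\alpha$ and of the finitely many parameters in the defining formula — are mapped by $j$ into $V$ and computed identically in $M_1$ and $M_2$. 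Consequently $\omega_1^V\in j_1(X)\iff\omega_1^V\in j_2(X)$, contradicting the previous paragraph.

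The main obstacle is precisely this independence step: establishing, for Shelah's weaker quantifier $Q^{St}$ and at the single level relevant to subsets of $\omega_1$, the comparison $C(\aam)^{M}=C(\aam_{\delta})^V$ together with the absoluteness of the defining parameters. This is exactly where the two Woodin cardinals are spent — one to run the stationary tower and drive the dichotomy, the other (with a measurable below it) to secure $\delta^1_2=\omega_2$ and hence the parameter-absoluteness of Lemma~\ref{omega2} — and it is the part most likely to require care, since everything else (normality, $\kappa$-completeness, and the passage to $GCH$ via Silver) is routine once the dichotomy is known.
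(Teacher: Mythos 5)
Your proposal is correct and takes essentially the same route as the paper: a minimal counterexample $B\subseteq\omega_1$ in the $C(\aam)$-hierarchy, the arrangement $\delta^1_2=\omega_2$ using the lower Woodin cardinal so that $j$ of the level and of the defining parameters land in $V$, the level-by-level identification of $C(\aam)^M$ with the $\delta$-interpreted hierarchy in $V$ to make $j(B)$ generic-independent, and the contradiction from two generics containing $B$ and $\omega_1\setminus B$ respectively, with GCH then following from Silver's argument for $L[\mu]$. The only step you elide is the preliminary reduction to $|\alpha|\le\aleph_1$, which the paper gets from the downward L\"owenheim--Skolem--Tarski theorem for $\LL(Q^{St})$ and which is needed before the analogue of Lemma~\ref{omega2} can be applied to $\alpha$ and the parameters.
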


\begin{proof}
We know $C(\aam)=L[\F_{\omega_1}]$. 
We show that $D=\F_{\omega_1}\cap C(\aam)$ measures every set in $C(\aam)$. Let us  assume the contrary. We take a minimal $\alpha$ such that there is a (minimal) set $B\subseteq\omega_1$ in $J'_\alpha$ (the hierarchy generating $C(\aam)$) such that $B\notin D$ and $\omega^V_1\setminus B\notin D$. The logic $\LL(\aam)$ satisfies a Downward L\"owenheim Skolem Tarski Theorem down to $\aleph_1$ (\cite{MR0376334}). Hence $|\alpha|\le\aleph_1$.
As in the beginning of the proof of Theorem~\ref{main1}, we can assume, w.l.o.g., that $\delta^1_2=\omega_2$ and we have still one Woodin cardinal $\delta$ left.
Let $G$ be $Q_{<\delta}$-generic  and $j:V\to M\subseteq V[G]$ the generic ultrapower embedding. Let $j(\alpha)=\beta$. Now $j(B)$ is a stationary co-stationary subset of $\delta$
($=\omega_1^M$) in $M$. Moreover, $\beta$ is the minimal ordinal for which there is such a set in $J'_{\beta}$ in $M$, and  $j(B)$  is  minimal   such a set in $\LL(\aam)$. As in the proof of Theorem~\ref{main1} we can now argue that $j(B)\in V$. We get a contradiction by taking two different generic sets for $Q_{<\delta}$, one containing $B$ and the other containing $\omega_1\setminus B$.\end{proof}

\begin{proposition}\label{439829003}
If $0^\#$ exists, then $0^\#\in C(\aam)$.
\end{proposition}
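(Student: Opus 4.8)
The plan is to produce, inside $C(\aam)$, a nontrivial elementary embedding $j\colon L\to L$. Since the existence of such an embedding is equivalent to the existence of $0^\#$, and since $0^\#$ is definable from the critical sequence of $j$, this places $0^\#$ in $C(\aam)$. The embedding will be the ultrapower of $L$ by the trace of the club filter on $\omega_1^V$.

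First I would set $U=\F_{\omega_1}\cap L$, the restriction of the closed unbounded filter on $\omega_1^V$ to those subsets of $\omega_1$ lying in $L$. The key point is that, assuming $0^\#$, $U$ is a uniform, normal, countably complete $L$-ultrafilter on $\omega_1^V$. Indeed, every $A\in\mathcal{P}(\omega_1)^L$ is definable in $L$ from Silver indiscernibles; since the indiscernibles below the cardinal $\omega_1^V$ form a club in $\omega_1^V$, indiscernibility forces a tail of them to lie either entirely inside $A$ or entirely outside $A$, so $A$ contains a club or is disjoint from one. Hence $U$ measures every $L$-subset of $\omega_1^V$. Normality and countable completeness of $U$ are inherited from the corresponding properties of $\F_{\omega_1}$ applied to $L$-indexed sequences (diagonal and countable intersections of clubs remaining clubs).

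Next I would verify that $U\in C(\aam)$. Because $C(\aam)=L[D]$ with $D=C(\aam)\cap\F_{\omega_1}$, and because $\mathcal{P}(\omega_1)^L\in L\subseteq C(\aam)$, we have $U=D\cap\mathcal{P}(\omega_1)^L\in C(\aam)$; here one uses precisely the expressibility, recalled just before Theorem~\ref{werlkj}, that ``$A$ is stationary in $V$'' is $\LL(\aam)$-definable for $L$-sets $A\subseteq\omega_1$, which is what makes the club-filter trace available inside $C(\aam)$. I then form $\mbox{Ult}(L,U)$ inside $C(\aam)$ using functions $f\colon\omega_1\to\mathrm{Ord}$ with $f\in L$. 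Countable completeness of $U$ makes this ultrapower well-founded in $V$: an infinite $\in$-descending chain would be witnessed by $L$-functions $f_n$ with each $\{\alpha:f_{n+1}(\alpha)<f_n(\alpha)\}$ containing a club, and a common point of these countably many clubs would yield an infinite descending sequence of ordinals. Well-foundedness is downward absolute to the inner model, so $\mbox{Ult}(L,U)$ is well-founded in $C(\aam)$ as well.

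Finally, transitivizing $\mbox{Ult}(L,U)$ inside $C(\aam)$ gives a transitive proper-class model of $V=L$, hence $L$ itself, and the ultrapower map is a nontrivial elementary $j\colon L\to L$ with $\mathrm{crit}(j)=\omega_1^V$ by uniformity and normality of $U$. From $j$ one reads off $0^\#$ as $\{\ulcorner\phi\urcorner : L\models\phi[\kappa_0,\dots,\kappa_{n-1}]\}$, where $\kappa_n=j^n(\omega_1^V)$ is the critical sequence, all computed within $C(\aam)$; hence $0^\#\in C(\aam)$. I expect the main obstacle to be the first step: justifying carefully that under $0^\#$ the club filter measures \emph{every} $L$-subset of $\omega_1^V$ (the correct handling of indiscernible parameters lying above $\omega_1^V$) and that the resulting $U$ is genuinely normal and countably complete as an $L$-ultrafilter, so that the ultrapower of $L$ is well-founded and the embedding nontrivial.
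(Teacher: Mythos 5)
Your proposal is correct in substance, and it rests on exactly the same key fact as the paper's own argument: under $0^\#$ the Silver indiscernibles below $\omega_1^V$ form a club which decides membership in every $A\in\mathcal{P}(\omega_1)^L$, so that $U=\F_{\omega_1}\cap L$ is a normal, countably complete $L$-ultrafilter whose trace is available inside $C(\aam)$ via the $\LL(Q^{St})$-expressibility of stationarity. Where you diverge is in how $0^\#$ is then extracted. The paper does this directly, with no ultrapower machinery: since the indiscernibles below $\omega_1^V$ are club, $\ulcorner\phi\urcorner\in 0^\#$ iff some club $C\subseteq\omega_1^V$ is homogeneous for $\phi$ (all increasing tuples from $C$ satisfy $\phi$ in $L$), and $\ulcorner\phi\urcorner\notin 0^\#$ iff an iterated ``for club many $a_{k+1}>a_k$'' quantifier asserts $\neg\phi$; both clauses are $\LL(Q^{St})$-definable over a suitable level $L'_\alpha$, so $0^\#$ appears at the next level. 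Your route through $\mathrm{Ult}(L,U)$ is longer but more structural (it exhibits $\omega_1^V$ as measurable over $L$ from inside $C(\aam)$, in the spirit of Theorem~\ref{werlkj}), and it does go through. The one step you should not leave as a one-line remark is the last: that $0^\#=\{\ulcorner\phi\urcorner : L\models\phi[\kappa_0,\dots,\kappa_{n-1}]\}$ for $\kappa_n=j^n(\omega_1^V)$. A nontrivial $j\colon L\to L$ by itself yields $0^\#$ only through the full $\mathrm{Ord}$-length iteration; to use only the first $\omega$ points of the critical sequence you must verify that they are themselves Silver indiscernibles. For your particular $j$ this is true, but it needs the \L o\'s computation: writing $\omega_1^V=i_\lambda$, the classes $\{[c_{i_\eta}]\}_{\eta<\lambda}\cup\{[\mathrm{id}]_U\}\cup\{[c_{i_\eta}]\}_{\eta\ge\lambda}$ form a generating class of indiscernibles of the ultrapower realizing the type $0^\#$, so the transitive collapse sends $[c_{i_\eta}]$ to $i_{\eta+1}$ for $\eta\ge\lambda$, whence $j(i_\eta)=i_{\eta+1}$ there and $\kappa_n=i_{\lambda+n}$. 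A final cosmetic point: to turn $\{\ulcorner\phi\urcorner : L\models\phi[\vec\kappa]\}$ into a set definable over a single level of the hierarchy (Tarski), replace satisfaction in the proper class $L$ by satisfaction in $L_\theta$ for one fixed indiscernible $\theta$ above all the $\kappa_n$, e.g.\ $\theta=\omega_2^V$; the paper faces and implicitly resolves the same issue by working below $\omega_1^V$, where $L_{\omega_1^V}\prec L$.
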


\begin{proof}
  Assume $0^\sharp$.
A first order formula $\phi(x_1,\ldots,x_n)$ holds in $L$ for an increasing sequence of indiscernibles below $\omega_1^V$ if and only if there is a club $C$ of ordinals $<\omega_1^V$ such that every increasing sequence $a_1<\ldots <a_n$ from $C$ satisfies  $\phi(a_1,\ldots,a_n)$ in $L$. Similarly,
$\phi(x_1,\ldots,x_n)$ does not hold in $L$ for an increasing sequence of indiscernibles below $\omega_1^V$ if and only if there is a club of ordinals $a_1<\omega_1$ such that there ia a club of ordinals $a_2$ with $a_1<a_2<\omega_1$ such that $\ldots$ such that  there is a  club of ordinals $a_n$ with $a_{n-1} <a_n<\omega_1$ satisfying $\neg\phi(a_1,\ldots,a_n)$. From this it follows that $0^\#\in C(\aam)$. 
  \end{proof}

\begin{theorem}
It is consistent relative to the consistency of ${\zfc}$ that
$$C^*\nsubseteq C(\aam)\wedge C(\aam)\nsubseteq C^*.$$
\end{theorem}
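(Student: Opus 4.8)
The plan is to realise both non-inclusions in a single forcing extension $W$ of $L$ by coding one real into the stationarity pattern at $\omega_1$ and a second, independent real into the stationarity pattern at $\omega_2$. The point is the asymmetry between the two logics: $C^*$ is sensitive to cofinalities and hence (by the treatment of the $\omega$-club filter in \cite{kmv}, the filter that makes regular $\kappa>\omega_1$ weakly compact in $C^*$) to stationarity of its own subsets of $\{\alpha<\kappa:\cof(\alpha)=\omega\}$ for regular $\kappa>\omega_1$, but it is blind to stationary subsets of $\omega_1$, where the cofinality predicate is trivial; dually $C(\aam)=L[D]$ with $D=C(\aam)\cap\F_{\omega_1}\subseteq\P(\omega_1)$ is by construction sensitive only to the club filter on $\omega_1$ and carries no information about cofinalities above $\omega_1$.

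Working over $V=L$, fix a partition $\langle S_n:n<\omega\rangle$ of $\omega_1$ into stationary sets, and (as in Theorem~\ref{slllk}, but at $\omega_2$) a partition $\langle T_n:n<\omega\rangle$ of a non-reflecting stationary subset of $\{\alpha<\omega_2:\cof(\alpha)=\omega\}$ with fat complement into stationary sets, all in $L$. First add two mutually generic Cohen reals $r,s$; this is ccc, so all cofinalities and the stationarity of every $S_n$ and $T_n$ are preserved. Next, exactly as in Theorem~\ref{slllk}, shoot a club through $\omega_2\setminus\bigcup_{n\in s}T_n$: this adds no bounded subset of $\omega_2$, hence preserves $\omega_1$, $\omega_2$ and all of $\P(\omega_1)$, and afterwards $T_n$ is nonstationary exactly when $n\in s$. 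Finally, as in the Proposition following Theorem~\ref{slllk} (on $\mathrm{Con}(C(\aaq)^{C(\aaq)}\ne C(\aaq))$), shoot a club through $\omega_1\setminus\bigcup_{n\in r}S_n$: this forcing is $\sigma$-distributive (adds no reals, preserves $\omega_1$) and proper (so it preserves stationarity of subsets of $\{\alpha<\omega_2:\cof(\alpha)=\omega\}$, leaving the $\omega_2$-pattern intact), and afterwards $S_n$ is nonstationary exactly when $n\in r$. No cofinality of an ordinal is ever changed, so $\{\alpha<\kappa:\cof(\alpha)=\omega\}$ is computed in $W$ just as in $L$ for every regular $\kappa$.

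For $C^*\nsubseteq C(\aam)$ I would show $s\in C^*\setminus C(\aam)$. By the cited fact about $C^*$, the set $s=\{n:T_n\text{ is nonstationary in }V\}$ is definable inside $C^*$ from the $T_n\in L\subseteq C^*$, so $s\in C^*$; whereas $C(\aam)=L[D]$ with $D\subseteq\P(\omega_1)$, the $\omega_2$-coding adds no subsets of $\omega_1$, and the whole $\omega_1$-stationarity structure of $W$ depends only on $r$, so a level-by-level analysis of the $C(\aam)$-hierarchy (as in the Proposition above) gives $C(\aam)\subseteq L[r,G_{\omega_1}]$, a submodel not containing the mutually generic $s$; hence $s\notin C(\aam)$. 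For $C(\aam)\nsubseteq C^*$ I would show $r\in C(\aam)\setminus C^*$. Since ``$S_n$ is nonstationary'' is expressible in $\LL(Q^{St})$ and each $S_n\in L$, we get $r=\{n:S_n\text{ nonstationary}\}\in C(\aam)$; conversely $C^*$ is built from the cofinality-$\omega$ relation of $W$, which coincides with that of $L$ and does not distinguish stationary subsets of $\omega_1$, so $C^*$ is contained in the submodel generated by $L$ together with the $\omega_2$-relevant generics, and by mutual genericity $r$ is not in that submodel, so $r\notin C^*$.

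The main obstacle is the load-bearing input that $C^*$ decides the stationarity of its own subsets of $\{\alpha<\omega_2:\cof(\alpha)=\omega\}$; this is precisely why the separation must be routed through stationarity at $\omega_2$ rather than through raw cofinalities, because over $L$ one cannot change a cofinality to $\omega$ without collapsing cardinals, and a cofinality-preserving forcing would leave $C^*=L\subseteq C(\aam)$. The secondary, technical obstacle is the non-interference bookkeeping: one must check that the $\omega_2$-club shooting preserves $\P(\omega_1)$, that the $\omega_1$-club shooting is proper and adds no reals (so the two coded patterns are independent), and then run the inductive homogeneity analysis—modelled on the Proposition following Theorem~\ref{slllk}—that pins down $C(\aam)$ inside $L[r,G_{\omega_1}]$ and $C^*$ inside the complementary submodel, which is what yields $s\notin C(\aam)$ and $r\notin C^*$.
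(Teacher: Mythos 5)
There is a genuine gap, and it is exactly at the point you flag as the ``load-bearing input.'' Your construction requires that $C^*$ can read off the real $s=\{n: T_n\mbox{ is nonstationary}\}$ from the stationarity pattern of subsets of $\{\alpha<\omega_2:\cof(\alpha)=\omega\}$. But $C^*$ is built with the quantifier $Q^{\cf}_\omega$, which only reveals the cofinality of definable linear orders; it cannot express stationarity. The formula $\aaq s(\sup(s)\in A\to\sup(s)\in B)$ that makes the trace of the $\omega$-club filter definable lives in $\LL(\aaq)$, not in $\LL(Q^{\cf}_\omega)$, and the weak compactness of regular $\kappa>\omega_1$ in $C^*$ from Part 1 does not give $C^*$ the ability to decide stationarity of its own subsets of $\{\alpha<\kappa:\cof(\alpha)=\omega\}$ --- if it did, the trace of $\F^\omega(\kappa)$ would be an ultrafilter in $C^*$ and $\kappa$ would be measurable there, which the present paper explicitly says could not be proved for $C^*$ even consistently (it is the advertised difference between $C^*$ and $C(\aaq)$). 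Since every forcing in your construction preserves all cofinalities (as you note yourself), the cofinality-$\omega$ predicate of $W$ agrees with that of $L$, an induction on the hierarchy gives $C^*=L$ in $W$, and hence $s\notin C^*$. Your own closing remark --- ``a cofinality-preserving forcing would leave $C^*=L\subseteq C(\aam)$'' --- applies verbatim to your construction and defeats the first non-inclusion.

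The fix is the paper's actual move: to put a real into $C^*\setminus C(\aam)$ one must genuinely change cofinalities, and the premise that this is forbidden because it collapses cardinals is a non sequitur. The paper uses a modified Namba forcing making $\cof(\aleph^L_{n+1})=\omega$ exactly for $n\in r_0$; this does collapse cardinals, but that is harmless --- the only thing that must be preserved is the stationarity of subsets of $\omega_1$ (so that $C(\aam)$ stays $L$), and that is guaranteed because the forcing satisfies the $S$-condition. Your second direction, coding $r$ into the stationarity pattern of a partition $\langle S_n:n<\omega\rangle$ of $\omega_1$ and arguing $r\in C(\aam)\setminus C^*$ via $C(\aam)=L(r)$ and cofinality-preservation, is essentially the paper's argument and is fine; but the $\omega_2$-stationarity coding for the other direction cannot be rescued within $\LL(Q^{\cf}_\omega)$ and must be replaced by a cofinality-changing forcing.
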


\begin{proof}
 We force over $L$ and first we add two Cohen reals $r_0$ and $r_1$, to obtain $V_1$. Now we use  modified Namba forcing to make $\cof(\aleph^L_{n+1})=\omega$ if and only if $n\in r_0$.  This forcing satisfies the $S$-condition (see \cite{kmv}), and therefore will not---by \cite{MR924672}---kill  the stationarity of any stationary subset of $\omega_1$. The argument is essentially the same as for Namba forcing. Let the extension of $V_1$ by $\oP$ be $V_2$. In $V_2$ we have $C(\aam)=L$ because we have not changed stationary subsets of $\omega_1$. But $V_2\models r_0\in C^*$.

 Let $S_n$, $n<\omega$, be in $L$ a definable sequence of disjoint stationary subsets of $\omega_1$ such that $\bigcup_n S_n=\omega_1$. Working in $V_2$, we use the canonical forcing notion  which kills the stationarity of $S_n$ if and only if $n\in r_1$. Let the resulting model be $V_3$. The cofinalities of  ordinals are the same in $V_2$ and $V_3$, whence  $(C^*)^{V_2}$ is the same as $(C^*)^{V_3}$. Thus $V_3\models r_1\in C(\aam)\setminus C^*.$ Now we argue that $V_3\models C(\aam)=L(r_1)$. First of all, $L(r_1)\subseteq C(\aam)$ by the construction of $V_3$. Next we prove by induction on the construction of $C(\aam)$ as a hierarchy $J'_\a$, $\a\in On$, that  $J'_\a\subseteq L(r_1)$. When we consider $J'_{\a+1}$ and assume $J'_\a\subseteq L(r_1)$, we have to decide whether a subset $S$ of $\omega_1$, constructible from $r_1$, is stationary or not. The set $S$ is stationary in $V_3$ if and only if it is stationary in $L(r_1)$ and it is not included modulo the club filter in $\bigcup_{n\in r_1}S_n$. Thus $J'_{\a+1}\subseteq L(r_1)$. 
 
In $V_3$ the real $r_0$ is in $C^*\setminus C(\aam)$ and the real $r_1$ is in $C(\aam)\setminus C^*$.
\end{proof}

The logics $\LL(Q^{\cf}_\omega)$, giving rise to $C^*$, and $\LL(\aam)$, giving rise to $C(\aam)$, are two important logics, both introduced by  Shelah. Since $\LL(Q^{\cf}_\omega)$ is {\em fully} compact,  $\LL(\aam)$ cannot be a sub-logic of it. On the other hand, it is well-known and easy to show that $\LL(Q^{\cf}_\omega)$ {\em is} a sub-logic of $\LL(\aaq)$. Therefore it is interesting to note the following corollary to the above theorem:

\begin{corollary}
 It is consistent, relative to the consistency of {\zfc},  that $\LL(Q^{\cf}_\omega)\nsubseteq \LL(Q^{St})$ and hence $\LL(Q^{St})\ne \LL(\aaq)$.
\end{corollary}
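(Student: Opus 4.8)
The plan is to deduce the corollary softly from the Theorem just proved, using the monotonicity of the operator $C(\cdot)$ with respect to the sublogic relation rather than any direct expressibility argument about the quantifiers. Recall from \cite{kmv} that whenever a logic $\LL_1$ is a sublogic of $\LL_2$ — i.e.\ every $\LL_1$-formula has a logically equivalent $\LL_2$-formula — one has the inner-model inclusion $C(\LL_1)\subseteq C(\LL_2)$; this is precisely the principle that justifies the chain (\ref{relations}). Specializing, the implication
$$\LL(Q^{\cf}_\omega)\subseteq\LL(Q^{St})\ \Longrightarrow\ C^*\subseteq C(\aam)$$
is a theorem of $\zfc$, and hence holds in \emph{every} model of $\zfc$.

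The first step is to pass to the model $V_3$ constructed in the proof of the preceding Theorem, in which $r_0\in C^*\setminus C(\aam)$, so that in particular $C^*\nsubseteq C(\aam)$ holds in $V_3$. Since the displayed implication is a $\zfc$-theorem it is valid in $V_3$, and its contrapositive there yields $\LL(Q^{\cf}_\omega)\nsubseteq\LL(Q^{St})$ in $V_3$. As $V_3$ is obtained from a model of $\zfc$ by set forcing, this establishes the consistency, relative to the consistency of $\zfc$, of the first assertion.

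For the second assertion I would invoke the $\zfc$-provable fact recalled just before the corollary, namely $\LL(Q^{\cf}_\omega)\subseteq\LL(\aaq)$, which therefore also holds in $V_3$. If $\LL(Q^{St})=\LL(\aaq)$ were true in $V_3$, we would obtain $\LL(Q^{\cf}_\omega)\subseteq\LL(\aaq)=\LL(Q^{St})$ there, contradicting the non-inclusion just derived. Hence $\LL(Q^{St})\ne\LL(\aaq)$ in $V_3$, completing the proof of the ``hence'' clause within the same model.

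The entire deduction is a one-line application of monotonicity once the Theorem is in hand, so there is no computational difficulty; the only point demanding genuine care — and what I regard as the real content — is the \emph{locus} at which the sublogic relation is evaluated. One must be clear that ``$\LL_1\subseteq\LL_2\Rightarrow C(\LL_1)\subseteq C(\LL_2)$'' is a \emph{uniform} theorem of $\zfc$, so that it applies verbatim inside $V_3$, while ``$\LL(Q^{\cf}_\omega)\subseteq\LL(Q^{St})$'' must be read as the genuinely model-relative statement ``every $\LL(Q^{\cf}_\omega)$-sentence has a logically equivalent $\LL(Q^{St})$-sentence in the sense of $V_3$''. It is exactly the non-absolute, set-theoretic meaning of the quantifiers $Q^{\cf}_\omega$ and $Q^{St}$ (cofinality and stationarity) that makes such a non-inclusion consistent, and keeping the two logics and the two inner models evaluated in one and the same universe $V_3$ is what makes the argument go through.
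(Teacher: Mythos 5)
Your proposal is correct and follows essentially the same route the paper intends: the corollary is meant to be read off from the preceding theorem via the monotonicity $\LL_1\subseteq\LL_2\Rightarrow C(\LL_1)\subseteq C(\LL_2)$ (the principle behind the chain (\ref{relations})) applied in the model $V_3$ where $C^*\nsubseteq C(\aam)$, together with the remark made just before the corollary that $\LL(Q^{\cf}_\omega)\subseteq\LL(\aaq)$ always holds. Your care about evaluating the (non-absolute) sublogic relation and the inner models in the same universe $V_3$ is exactly the right point to flag.
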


We do not know whether $\LL(Q^{\cf}_\omega)\subseteq \LL(\aam)$ or  $\LL(\aam)=\LL(\aaq)$ is consistent.

A modification of $C(\aam)$ is the following $C(\aal)$:

\begin{definition}
$\mm\models Q^{St,0} xyzu\phi(x,y,{\ba})\psi(u,{\ba})$ if and only if 
$M_0=\{(b,c)\in M : \mm\models\phi(b,c,{\ba})\}$ is a linear order of cofinality $\omega_1$ and every club of initial segments has an element with supremum in 
$R_0=\{b\in M : \mm\models\psi(b,{\ba})\}$. The inner model $C(\aal)$ is defined as $C(\LL(Q^{St,0})).$\end{definition}

\begin{proposition} If there are two Woodin  cardinals, then
 $C(\aal)\models``\aleph^V_1$ is a measurable cardinal".
\end{proposition}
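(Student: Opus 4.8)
The plan is to mirror the proof of Theorem~\ref{werlkj} for $C(\aam)$, since the quantifier $Q^{St,0}$, just like $Q^{St}$, reads off the stationarity of subsets of $\omega_1^V$. First I would observe this detection explicitly: given $A\subseteq\omega_1$, apply $Q^{St,0}$ to the linear order $(\omega_1,<)$, which has cofinality $\omega_1$, with $R_0=A$. A (countable) proper initial segment is $[0,\gamma)$ with $R_0$-supremum $\gamma$, and clubs of initial segments correspond under $\gamma\mapsto[0,\gamma)$ to clubs of $\omega_1$, so the clause ``every club of initial segments has an element with supremum in $R_0$'' holds iff every club of $\omega_1$ meets $A$, i.e. iff $A$ is stationary. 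Hence ``$A$ contains a club'' is $\LL(Q^{St,0})$-expressible, so the trace $D=\F_{\omega_1}\cap C(\aal)$ is definable in the hierarchy and $D\in C(\aal)$. Being the restriction of the normal, $\sigma$-complete, nonprincipal club filter, $D$ is a normal, nonprincipal filter in $C(\aal)$ which is closed under intersections of fewer than $\omega_1^V$ of its members (such an intersection, taken over a sequence lying in $C(\aal)$, is a $<\omega_1^V$-intersection of clubs of $V$, hence still contains a club of $V$). Therefore $\omega_1^V$ will be measurable in $C(\aal)$ as soon as $D$ is shown to be an \emph{ultrafilter} in $C(\aal)$, i.e. as soon as no subset of $\omega_1$ lying in $C(\aal)$ is simultaneously stationary and co-stationary.

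To prove this last point I would argue by contradiction, exactly as in Theorem~\ref{werlkj}. Suppose some $B\subseteq\omega_1$ with $B\in L'_\alpha$ (the $C(\aal)$-hierarchy) is stationary and co-stationary, and choose $\alpha$ and the defining formula minimal. Using a Downward L\"owenheim--Skolem--Tarski theorem for $\LL(Q^{St,0})$ down to $\aleph_1$ — which follows from Shelah's analysis in \cite{MR0376334} by the same reasoning that gives it for $\LL(\aam)$, the cofinality-$\omega_1$ clause being preserved because it is part of the quantifier — together with the argument of Lemma~\ref{some}, I would reduce to $|\alpha|\le\aleph_1$. Spending one of the two Woodin cardinals on Shelah's forcing that makes the nonstationary ideal on $\omega_1$ saturated (which does not disturb $C(\aal)$ up to level $\alpha+1$), I would arrange $\delta^1_2=\omega_2$ while keeping a Woodin cardinal $\delta$ in reserve, as at the start of the proof of Theorem~\ref{main1}. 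Forcing with the countable stationary tower $Q_{<\delta}$ then yields a generic ultrapower $j:V\to M\subseteq V[G]$ with $j(\omega_1^V)=\delta=\omega_1^M$, and $j(B)$ is a stationary, co-stationary subset of $\delta$ in $M$, witnessed minimally at level $\beta=j(\alpha)$ by the minimal formula.

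The crux — and the step I expect to be the main obstacle — is to show that $j(B)$ is in fact an element of $V$ not depending on the generic, i.e. that some $B^*\in V$ satisfies $\force_{Q_{<\delta}}j(\check B)=\check{B^*}$. Here I would reuse the machinery of Theorem~\ref{main1}: by minimality of $\alpha$ there are no stationary/co-stationary subsets of $\omega_1$ defined below level $\alpha$, so the relevant club determinacy for subsets of $\omega_1$ holds below $\alpha$, and the level-by-level analysis of Lemma~\ref{tomorrow} (now carried out for the $\delta$-interpretation of $Q^{St,0}$, with cofinality $\delta$ and initial segments of size $<\delta$, in analogy with $\aaqd$) identifies $(L''_\beta)^M$ with the $\beta$-th level $L^*_\beta\in V$ of the corresponding inner model computed in $V$. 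Since $|\alpha|\le\aleph_1$, Lemma~\ref{omega2} sends the (countably codable) defining parameters of $j(B)$ into $V$ and forces them to fixed values, so $j(B)=\{\gamma<\delta : L^*_\beta\models_\delta\phi(\gamma,\dots)\}$ is definable in $V$ from $V$-parameters, whence $j(B)\in V$ and is forced to a fixed $B^*$. Finally I would conclude as in Theorem~\ref{werlkj}: taking a generic $G_1$ with $B\in G_1$ gives $\omega_1^V\in j_1(B)=B^*$, while a generic $G_2$ with $\omega_1\setminus B\in G_2$ gives $\omega_1^V\notin j_2(B)=B^*$, contradicting that $B^*$ is a single fixed subset of $V$. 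Thus $D$ is an ultrafilter, and $\omega_1^V$ is measurable in $C(\aal)$.
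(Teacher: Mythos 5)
Your proposal is correct and follows exactly the route the paper takes: the paper's own proof of this proposition is literally ``\emph{mutatis mutandis} as the proof for $C(\aam)$'' (Theorem~\ref{werlkj}), and what you have written is a faithful, detailed execution of that adaptation --- expressing stationarity of subsets of $\omega_1$ via $Q^{St,0}$, reducing measurability to $D=\F_{\omega_1}\cap C(\aal)$ being an ultrafilter, and ruling out a minimal stationary co-stationary counterexample by the L\"owenheim--Skolem reduction plus the stationary tower / two-generics argument. No issues to report.
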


\begin{proof}
 The proof of this is---{\em mutatis mutandis}---as  the proof for $C(\aam)$.

\end{proof}

\begin{proposition}\label{dagger}
 If $0^\dagger$ exists, then $0^\dagger\in C(\aal)$.
 \end{proposition}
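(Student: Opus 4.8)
The plan is to run the argument for Proposition~\ref{439829003} (that $0^\#\in C(\aam)$) one level up, with $L$ replaced by the minimal iterable inner model $L[\mu]$ carrying a measurable, and to exploit the one extra feature of $Q^{St,0}$ over Shelah's $Q^{St}$: the ability to speak about stationarity inside linear orders of cofinality $\omega_1$, not merely $\aleph_1$-like ones. As I will explain, this is exactly the feature needed to read off the indiscernibles of $L[\mu]$ that lie \emph{above} its measurable cardinal, which is why $0^\dagger$ lands in $C(\aal)$ and not merely in $C(\aam)$.

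Recall that $0^\dagger$ is a real coding the theory of $(L[\mu],\in,\mu)$ together with a class of Silver-style indiscernibles which, relative to the measurable $\kappa=\kappa^{L[\mu]}$, splits into the indiscernibles below $\kappa$ and those above $\kappa$, the measurable itself being distinguished. Since $0^\dagger$ exists, the uncountable cardinals of $V$ occur in the critical (iterate) sequence of $\mu$, and I would first iterate $L[\mu]$ to the model $\bar L=L[\bar\mu]$ whose measurable is $\omega_1^V$. This is the normalization that already appears in the proof that $\aleph_1^V$ is measurable in $C(\aal)$, and it makes the distinguished measure $\bar\mu$ on $\omega_1^V$ the very object that proof places inside $C(\aal)$; thus $C(\aal)$ has access to $\bar\mu\cap C(\aal)$, which will interpret the symbol $\mu$.

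The recovery of $0^\dagger$ then splits into two halves. For the indiscernibles of $\bar L$ below $\omega_1^V$ the situation is literally that of Proposition~\ref{439829003}: they form a club in $\omega_1^V$, and a first order pattern $\phi(x_1,\dots,x_n)$ holds of increasing tuples of them if and only if there is a club $C\subseteq\omega_1^V$ all of whose increasing $n$-tuples satisfy $\phi$ in $\bar L$; the existence of such a club is expressible in stationary logic, and in particular in $\LL(Q^{St,0})$, because the $\aleph_1$-like case $M_0\cong(\omega_1^V,<)$ of the quantifier $Q^{St,0}$ already expresses ordinary stationarity on $\omega_1^V$. For the indiscernibles above $\omega_1^V$ I would use $Q^{St,0}$ in an essential way: taking $\gamma$ to be an uncountable cardinal of $V$ of cofinality $\omega_1$ (for instance $\aleph_{\omega_1}^V$), the indiscernibles of $\bar L$ below $\gamma$ of cofinality $\omega_1$ form a cofinality-$\omega_1$ ``club'' of $\gamma$, and a pattern of the upper indiscernibles holds iff it is witnessed along such a club; this is precisely a $Q^{St,0}$-assertion about the linear order $(\gamma,<)$ of cofinality $\omega_1$ together with the predicate picking out the witnessing initial segments. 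Combining the two halves with $\bar\mu\cap C(\aal)$, the predicate ``$\phi\in 0^\dagger$'' becomes, uniformly in the Gödel number of $\phi$, an $\LL(Q^{St,0})$-statement over a sufficiently tall level $L'_\gamma$ of the $C(\aal)$-hierarchy; hence $0^\dagger$, as a set of formulas, is $\LL(Q^{St,0})$-definable and therefore belongs to $C(\aal)$.

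The hard part will be the upper-indiscernible half. I expect the main obstacle to be the combinatorial lemma that the full indiscernibility pattern of $L[\mu]$ above the measurable---including how the upper indiscernibles interleave with the lower ones and with $\kappa=\omega_1^V$, and the constraints forced by the measure $\mu$---is faithfully witnessed by cofinality-$\omega_1$ clubs, and that each such witness is genuinely captured by $Q^{St,0}$ and not merely by the weaker $Q^{St}$. This is the step that requires $C(\aal)$ rather than $C(\aam)$, and it is exactly where the cofinality-$\omega_1$ reading of the quantifier, as opposed to the $\aleph_1$-like reading, does the work.
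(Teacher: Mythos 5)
Your overall strategy is the paper's: recover $0^\dagger$ as the theory of increasing tuples of indiscernibles for $L[U]$, read off by iterated club quantifiers, with the cofinality-$\omega_1$ instance of $Q^{St,0}$ doing the work for indiscernibles above the measurable --- which is, as you say, exactly the feature separating $\LL(Q^{St,0})$ from $\LL(Q^{St})$. But your implementation differs from the paper's, and the difference matters for where the difficulty lands. The paper does not iterate $L[U]$ to move its measurable to $\omega_1^V$, does not split the indiscernibles into a lower and an upper class, and does not invoke the measurability of $\aleph_1^V$ in $C(\aal)$ to supply the measure; instead it fixes a single indiscernible $\alpha>\delta$ of $V$-cofinality $\omega_1^V$ and declares that a pattern $\phi(x_1,\ldots,x_n)$ holds of increasing tuples of indiscernibles iff there is a club $C$ of ordinals below $\alpha$ (a club in the cofinality-$\omega_1$ sense, hence a $Q^{St,0}$-assertion about the linear order $(\alpha,<)$) all of whose increasing $n$-tuples satisfy $\phi$ in $L[U]$, with a nested-club formulation for the negative case. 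This single-ordinal set-up is precisely what absorbs the step you yourself flag as ``the hard part'': the interleaving of lower and upper indiscernibles, and the bookkeeping for $\mu$, never have to be disentangled, because the whole type is read off below one $\alpha$ lying above the measurable. In your two-sorted version that interleaving lemma is genuinely needed, and you do not prove it --- you only announce that you expect it to be the main obstacle --- so as written your argument has a gap exactly there. The gap looks fillable (mixed tuples $c_1<\dots<c_m<\omega_1^V<d_1<\dots<d_n$ should be handled by nesting the stationarity quantifier on $(\omega_1^V,<)$ inside the $Q^{St,0}$-quantifier on $(\gamma,<)$, and your identification of $\bar\mu$ with the trace of the club filter on $\omega_1^V$ is correct and consistent with the paper's proof that $\aleph_1^V$ is measurable in $C(\aal)$), but you should either carry that step out or adopt the paper's shortcut of reading the entire type off below a single cofinality-$\omega_1$ indiscernible above $\delta$.
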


\begin{proof}Assume $0^\dagger$. There is a club class of indiscernibles for the inner model $L[U]$ where $U$ is (in $L[U]$) a normal measure on an ordinal $\delta$. Let us choose an indiscernible $\a$ above $\delta$ of $V$-cofinality $\omega_1^V$. We can define $0^\dagger$ as follows: An increasing sequence of indiscernibles satisfies a given formula $\phi(x_1,\ldots,x_n)$ if and only if there is a club $C$ of ordinals below $\alpha$  such that every increasing sequence $a_1<\ldots <a_n$ from $C$ satisfies  $\phi(a_1,\ldots,a_n)$ in $L[U]$. Similarly,
$\phi(x_1,\ldots,x_n)$ does not hold in $L[U]$ for an increasing sequence of indiscernibles below $\a$ if and only if there is a club of ordinals $a_1<\a$ such that there is a club of ordinals $a_2$ with $a_1<a_2<\a$ such that $\ldots$ such that  there is a  club of ordinals $a_n$ with $a_{n-1} <a_n<\a$ satisfying $\neg\phi(a_1,\ldots,a_n)$. From this it follows that $0^\dagger\in C(\aal)$.

\end{proof}

\begin{corollary}
If there are two Woodin cardinals, then $C(\aam)\ne C(\aal)$. Then also the logics  
$\LL(Q^{St})$ and $\LL(Q^{St,0})$ are non-equivalent.\end{corollary}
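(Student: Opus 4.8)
The plan is to single out one canonical object, $0^\dagger$, which belongs to $C(\aal)$ but not to $C(\aam)$, and then to deduce the non-equivalence of the two logics from the resulting inequality of inner models. First I would observe that two Woodin cardinals imply, in particular, that $0^\dagger$ exists: below a Woodin cardinal there are measurable cardinals of arbitrarily large Mitchell order, so the dagger certainly exists. Hence by Proposition~\ref{dagger}, $0^\dagger\in C(\aal)$.

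The core of the argument is to show $0^\dagger\notin C(\aam)$. By Theorem~\ref{werlkj}, the assumption of two Woodin cardinals gives $C(\aam)=L[D]$, where $D=C(\aam)\cap\F_{\omega_1}$ is an ultrafilter in $C(\aam)$. I would first verify that $D$ is in fact a normal, countably complete measure on $\omega_1^V$ inside $C(\aam)$: countable completeness holds because any $\langle A_n:n<\omega\rangle\in C(\aam)$ with each $A_n\in D$ has each $A_n$ containing a club, so $\bigcap_nA_n$ contains a club and, lying in $C(\aam)$, belongs to $D$; normality is inherited from Fodor's lemma for the club filter. Thus $\omega_1^V$ is measurable in $C(\aam)$ and $C(\aam)=L[D]$ is the canonical $L[U]$-model for the normal measure $D$ on $\omega_1^V$. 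I would then invoke the classical fact that the dagger of an inner model is never an element of that model---the exact analogue of $0^\#\notin L$, namely $0^\dagger\notin L[U]$---to conclude $0^\dagger\notin C(\aam)$.

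Combining the two steps yields $0^\dagger\in C(\aal)\setminus C(\aam)$, so $C(\aam)\ne C(\aal)$. For the statement about logics, I would argue formally: if $\LL(Q^{St})$ and $\LL(Q^{St,0})$ were equivalent logics, then $\DEF_{\LL(Q^{St})}(M)=\DEF_{\LL(Q^{St,0})}(M)$ for every set $M$, so the two constructible hierarchies of Definition~\ref{defin} would agree level by level and $C(\LL(Q^{St}))=C(\LL(Q^{St,0}))$, i.e.\ $C(\aam)=C(\aal)$, contradicting the previous paragraph. Hence $\LL(Q^{St})$ and $\LL(Q^{St,0})$ are non-equivalent.

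The hard part will be the rigorous justification that $0^\dagger\notin C(\aam)$. The delicate point is not the existence of the dagger but the identification of $C(\aam)=L[D]$ as genuinely the \emph{canonical} $L[U]$-model for a normal measure on $\omega_1^V$---rather than merely as $L[D]$ for some abstract ultrafilter---so that the standard ``$0^\dagger\notin L[U]$'' theorem applies. The verification that the trace $D$ of the club filter is normal and countably complete inside $C(\aam)$, together with Kunen's uniqueness theory for $L[U]$, is precisely what bridges this gap.
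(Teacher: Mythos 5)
Your proposal is correct and follows essentially the same route as the paper: the paper's own (very terse) proof likewise derives $0^\dagger\in C(\aal)$ from Proposition~\ref{dagger} and $0^\dagger\notin C(\aam)$ from Theorem~\ref{werlkj}, concluding $C(\aam)\ne C(\aal)$ and hence the non-equivalence of the logics. The details you supply---that $D$ is a normal countably complete measure on $\omega_1^V$ in $C(\aam)=L[D]$ so that the classical fact $0^\dagger\notin L[U]$ applies---are exactly the justification the paper leaves implicit in its appeal to Theorem~\ref{werlkj}.
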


\begin{proof}
If there are two Woodin cardinals, then then $0^\dagger$ exists and $C(\aam)$ does not contain $0^\dagger$ by Theorem~\ref{werlkj}, while $C(\aal)$ does contain by  
Proposition~\ref{dagger}.
\end{proof}

Note that it is probably possible to prove the non-equivalence of the logics $\LL(Q^{St})$ and $\LL(Q^{St,0})$  in ZFC with a model theoretic argument using the exact definitions of the logics and by choosing the structures very carefully. But the non-equivalence result given by the above Corollary is quite robust in the sense that it is not at all sensitive to the exact definitions of the logics as long as the central separating feature, manifested in structures of the form $(\a,<)$, is respected.

\section{Open problems}

There are many open questions about $\CAA$. We list here the some of the most prominent ones:

\begin{enumerate}
\item Do regular cardinals of $V$ have stronger large cardinal properties in $\CAA$ than measurability,  under the assumption of large cardinals?
\item Which inner models for large cardinals (below one Woodin cardinal) exist inside $\CAA$?
\item  What is the consistency strength of Club Determinacy? 
\item Is AC true in $C_{\old}(\aaq)$? Is $C_{\old}(\aaq)=\CAA$?
\end{enumerate}

\section{Appendix: A counter-example to AC in $C_o(L^*)$}

Consider the quantifier
$$\begin{array}{l}
M\models Q_n^{ST}xyz\phi(x,\ba)\psi(y,z,\ba)\iff\psi(\cdot,\cdot,\ba)\mbox{ has  order-type $\aleph_{n+1}$}\\
\qquad\mbox{ and }\phi(\cdot,\ba)\mbox{ is a stationary set of points }
\mbox{of cofinality } \aleph_n \mbox{ in } \psi(\cdot,\cdot,\ba). 
\end{array}$$
We let $L^*$ be the extension of first order logic by the infinitely many quantifiers $Q_n^{ST}$, $n<\omega$.
Note that $C_o(L^*)\models ZF$.

\begin{proposition}
$Con(ZF)$ implies $Con(C_o(L^*)\models\neg AC)$.
\end{proposition}

\begin{proof}
We start with $V=L$. Let $S_{n,m}$, $m<\omega$, be disjoint stationary sets of ordinals $<\aleph_{n+1}$ of cofinality $\aleph_n$ such that the set  $\{\alpha<\aleph_{n+1}:\cof(\alpha)=\aleph_n\}\setminus \bigcup_{m}S_{n,m}$ is also stationary. We force mutually generic Cohen-reals $a_n$, $n<\omega$. Let us call the po-set of this forcing $\oQ$.
Let $\oP_n(a_n)$ force an $\aleph_n$-closed unbounded set $C_{n+1}$ into the set $\bigcup_{m\in a_n}S_{n,m}$. Let $\ba=\langle a_n:n<\omega\rangle$ and let $\oP(\ba)$ be the product of $\oP_n(a_n)$, $n<\omega$. In the extension by $\oP(\ba)$ we have for all $m,n<\omega$: $$m\in a_n\iff S_{n,m}\mbox{ is stationary},$$ whence  $\ba\subseteq C_o(L^*)$. 

Assume now $V=L[\ba][\langle C_{n+1}:n<\omega\rangle]$.
\medskip

\noindent{\bf Claim A:} $C_o(L^*)\subseteq L[\ba]$ and $\langle L'_\alpha:\alpha<\delta\rangle\in L[\ba]$ for all $\delta$.
\medskip

\begin{proof}
For a proof by induction, suppose $L'_\alpha\in L[\ba]$. Suppose $Z\in L'_{\alpha+1}$ is an $L^*$-definable (with parameters) subset of $L'_\alpha$. We shall show that $Z$ is in $L[\ba]$. Since we proceed by induction, this boils down to showing that if  $X\in L[\ba]$ is set of ordinals $<\aleph_{n+1}\le\alpha$ of cofinality $\aleph_n$, then we can decide in  
$L[\ba]$ whether $X$ is stationary in $V$ or not.
To this end we shall show that the following conditions are equivalent:
\begin{description}
\item [($*$)] $X$ is stationary in $V$.
\item [($**$)] There are $m\in a_n$ and $Y\subseteq X$ such that $Y\in L$ and $Y\cap S_{n,m}$ is stationary in $L$. 
\end{description}

\noindent $(*)\to (**)$: Since $L[\ba]$ is obtained from $L$ by the countable po-set $\oQ$, the $V$-stationary set $X$ is a countable union of sets in $L$. Thus $X$  contains a $V$-stationary subset $Y$ in $L$. We have forced an $\aleph_n$-closed unbounded set $C_{n+1}$ into the set $\bigcup_{m\in a_n}S_{n,m}$. There must be $m\in a_n$ such that $Y\cap S_{m,n}\in L$ is stationary in $V$, hence in $L$. 

\noindent $(**)\to (*)$: Suppose $m$ and $Y$ are as in $(**)$. Thus $Y\cap S_{n,m}$ is stationary in $L$. Adding the Cohen reals preserves the stationarity of $Y\cap S_{n,m}$.  Thus $Y\cap S_{n,m}$ is stationary in $L[\ba]$. If $k< n$, the po-set $\oP_k(a_k)$ is of cardinality $<\aleph_{k+1}$. Hence it does not kill the stationarity of $Y\cap S_{n,m}$. If $k>n$, the po-set $\oP_k(a_k)$ is  $<\aleph_{n+1}$-distributive. Hence it does not kill the stationarity of $Y\cap S_{n,m}$. Finally, $\oP_n(a_n)$ forces the  $\aleph_n$-closed unbounded set $C_{n+1}$ into the set $\bigcup_{l\in a_n}S_{n,l}$. But $Y\cap S_{n,m}\subseteq \bigcup_{l\in a_n}S_{n,l}.$ It is a standard fact about the club shooting forcing that if you add a generic club through the complement of a stationary set $S$ then the stationarity of any stationary set disjoint from $S$ is preserved. Hence the stationarity of $Y\cap S_{n,m}$ is preserved by $\oP_n(a_n)$. All in all, $X$ is stationary in $V$. We have proved the equivalence of ($*$) and $(**)$ and thereby Claim A.

\end{proof}

Let us fix $n^*<\omega$ and form a new sequence
$$\ba^*=\langle a_l:l<n^*\rangle^\frown\langle a^*_{n^*}\rangle^\frown \langle a_l:l>n^*\rangle,$$
where $a^*_{n^*}$ is a finite modification of $a_{n^*}$. Obviously, $L[\ba]=L[\ba^*]$. Let $M_1$ be obtained from 
$L[\ba]$ by forcing with the po-set $\oP(\ba)$ and $M_2$ from $L[\ba^*]$ (i.e. $L[\ba]$) by forcing with $\oP(\ba^*)$.
\medskip

\noindent{\bf Claim B:} $(C_o(L^*))^{M_1}=(C_o(L^*))^{M_2}$.
\medskip

\begin{proof}
We prove by induction on $\alpha$ that $(L'_\alpha)^{M_1}=(L'_\alpha)^{M_2}$ and
$(L'_\alpha)^{M_1}, (L'_\alpha)^{M_2}\subseteq L[\ba]$. Suppose this holds for $\alpha$.  Suppose $Z\in (L'_{\alpha+1})^{M_1}$ is in the sense of $M_1$ an $L^*$-definable (with parameters) subset of $(L'_\alpha)^{M_1}$. We shall show that $Z$ is in $(L'_{\alpha+1})^{M_2}$. Since we proceed by induction, we have to show that if  $X\subseteq(L'_\alpha)^{M_1}, X\in L[\ba]$ is a set of ordinals $<\aleph_{n+1}\le\alpha$ of cofinality $\aleph_n$, then $M_2$  can detect whether  $X$ is stationary in $M_1$ or not, and $M_1$  can detect whether  $X$ is stationary in $M_2$ or not.  By the equivalence of $(*)$ and $(**)$ above,  this boils down to detecting whether there is  $m\in a_n$  and $Y\subseteq X$, $Y\in L$, such that $Y\cap S_{m,n}$ is stationary in $L$, and respectively in $M_2$, switching $\ba$ to $\ba^*$. The question is non-trivial only if $n=n^*$, which we now assume. There is only a finite difference between  $a_n$ and $a^*_n$ and the criterion ``$Y\cap S_{m,n}$ is stationary in $L$" yields the same answer in $M_1$ and $M_2$. Therefore $M_1$ can detect whether $X$ is stationary in $M_2$, or not and vice versa.
\end{proof}

We are now ready to prove that AC fails in $C_o(L^*)$ in the model $M_1$. Suppose $\phi(x,y,\vec{b})$ defines, with parameters $\vec{b}$, a well-order $\prec$ of the reals of $M_1$. Let $n^*$ be large enough to be greater than any $m$ such that  $Q_m^{ST}$ occurs in $\phi(x,y,\vec{b})$ or in the definitions of the parameters $\vec{b}$, computed recursively. 
Let $p\in\oQ$ force that $a_{n^*}$ is the $\alpha$th real in the well-order $\prec$. Let us modify $a_{n^*}$ to $a_{n^*}^*$ so that they still agree about integers in the domain of $p$. We obtain $\ba^*$ and $M_2$, as above.  Now $M_1\models "a_{n^*}$ is the $\alpha$th real of $C_{o}(L^{*})$" and $M_2\models "a_{n^*}^*$ is the $\alpha$th real of $C_{o}(L^{*})$". However, $M_1$ and $M_2$ agree about $\prec$, because of the way we have chosen $n^*$, a contradiction.
\end{proof}

\bibliographystyle{plain}
\bibliography{KMVbib}

\bigskip

Juliette Kennedy

Department of Mathematics and Statistics

University of Helsinki
\medskip

Menachem Magidor

Department of Mathematics

Hebrew University Jerusalem
\medskip

Jouko V\"a\"an\"anen 

Department of Mathematics and Statistics

University of Helsinki

and

Institute for Logic, Language and Computation

University of Amsterdam

\end{document}